\documentclass[11pt]{article}

\usepackage[table]{xcolor}
\usepackage{fullpage,graphicx,authblk,ifthen}
\usepackage{amsmath,amssymb,amsfonts}
\usepackage{amsthm,mathrsfs}

\usepackage{newtxmath}

\newtheorem{definition}{Definition}

\newtheorem{corollary}{Corollary}
\newtheorem{remark}{Remark}

\newtheorem{proposition}{Proposition}

\usepackage{algorithm}
\usepackage{algpseudocode}
\usepackage{tikz}
\usepackage{physics}
\usepackage{booktabs}
\usepackage{adjustbox}  
\usepackage{array,amsmath}
\usepackage[font=small]{subcaption} 
\usepackage{hyperref}
\usepackage{multirow} 

\definecolor{darkgreen}{rgb}{0.0, 0.5, 0.0}
\definecolor{lightgray}{rgb}{0.83, 0.83, 0.83}
\definecolor{brandeisblue}{rgb}{0.19, 0.55, 0.91}
\definecolor{darkpastelgreen}{rgb}{0.4, 0.69, 0.2}
\definecolor{amber}{RGB}{218,165,32}
\definecolor{red}{rgb}{1.0, 0.0, 0.0}
\definecolor{mediumorchid}{rgb}{0.73, 0.33, 0.83}
\definecolor{coralpink}{rgb}{0.97, 0.51, 0.47}
\definecolor{cerise}{rgb}{0.87, 0.19, 0.39}
\definecolor{RYB1}{rgb}{0.83, 0.83, 0.83}
\definecolor{RYB2}{rgb}{0.19, 0.55, 0.91}
\definecolor{RYB3}{rgb}{0.4, 0.69, 0.2}
\definecolor{RYB4}{rgb}{1.0, 0.75, 0.0}
\definecolor{cadmiumred}{rgb}{0.89, 0.0, 0.13}
\definecolor{darkred}{RGB}{180,30,30}     
\definecolor{darkgreen}{RGB}{30,150,30}   
\definecolor{darkblue}{RGB}{30,30,180}  
\definecolor{MatBlue}  {RGB}{31,119,180}
\definecolor{MatOrange}{RGB}{255,127,14} 
\definecolor{MatGreen} {RGB}{44,160,44}  

\usepackage{tikz}
\usepackage{pgfplots}
\usepackage{pgfplotstable}
\pgfplotsset{compat=1.17}
\usetikzlibrary{shapes.geometric}
\usetikzlibrary{arrows.meta,calc}   
\pgfplotsset{compat=1.10, 
            label style={font=\footnotesize},
            tick label style={font=\footnotesize}}
\tikzstyle{customer}=[circle,draw=black,fill=gray!40,thick,inner sep=0pt,minimum size=8mm]
\tikzstyle{customerNot}=[circle,draw=white,fill=white,thick,inner sep=0pt,minimum size=8mm]

\tikzstyle{depot}=[draw=black,fill=gray!40,thick, inner sep=0pt,minimum size=6mm]
\tikzstyle{fdepot}=[regular polygon,regular polygon sides=5, draw=black,fill=gray!40,thick, inner sep=0pt,minimum size=6mm]
\tikzstyle{timestep}=[diamond, draw=black,fill=mediumorchid, thick, inner sep=0pt,minimum size=6mm]
\tikzstyle{nodeLegend}=[circle,draw=black,fill=gray!40,thick,inner sep=0pt,minimum size=4.5mm]
\tikzstyle{depotLegend}=[draw=black,fill=gray!40,thick, inner sep=0pt,minimum size=4mm]
\tikzstyle{customerLegend}=[circle,draw=black,fill=amber,thick,inner sep=0pt,minimum size=4mm]
\tikzstyle{fdepotLegend}=[regular polygon,regular polygon sides=5, draw=black,fill=gray!40,thick, inner sep=0pt,minimum size=4mm]
\tikzstyle{timestepLegend}=[diamond, draw=black,fill=mediumorchid, thick, inner sep=0pt,minimum size=4mm]
\tikzstyle{intermediate}=[circle,draw=black,fill=gray!20,thick,inner sep=0pt,minimum size=6mm]
\usetikzlibrary{%
    shapes.misc,    
    positioning,    
    patterns,       
}

\usetikzlibrary{decorations.pathreplacing, shapes.geometric, arrows}

\usetikzlibrary{decorations}
\tikzstyle{imp}=[circle,fill,inner sep=0pt, minimum size=2mm]
\tikzstyle{customer}=[circle,draw=black,fill=amber,thick,inner sep=0pt,minimum size=6mm]
\tikzstyle{depot}=[draw=black,fill=gray!40,thick, inner sep=0pt,minimum size=6mm]

\newcommand{\R}{\mathbb{R}} 
\newcommand{\Z}{\mathbb{Z}} 
\newcommand{\Q}{\mathbb{Q}}

\newcommand{\D}{\mathcal{D}} 
 
\newcommand{\E}{\mathcal{E}}

\newcommand{\N}{\mathcal{N}}

\newcommand{\conv}{\text{conv}\!}
\newcommand{\proj}{\text{proj}}
\newcommand{\LP}{LP}

\newcommand{\DW}{DW}
\newcommand{\AF}{AF}
\newcommand{\xbar}{\bar{\vb x}}
\newcommand{\ybar}{\bar{\vb y}}
\newcommand{\lbar}{\bar{\boldsymbol{\lambda}}}

\newcommand{\journalvspace}[1][1em]{%
}

\makeatletter
\newenvironment{mysubequations}[1]
 {%
  \addtocounter{equation}{-1}%
  \begin{subequations}
  \renewcommand{\theparentequation}{#1-}%
  \def\@currentlabel{#1}%
 }
 {%
  \end{subequations}\ignorespacesafterend
 }
\makeatother

\usepackage{natbib}
 \bibpunct[, ]{(}{)}{,}{a}{}{,}%
 \def\newblock{\ }%
\begin{document}

\title{Dantzig-Wolfe and Arc-Flow Reformulations: A Systematic Comparison}

\author[1]{Daniel Yam\'in\thanks{\texttt{dyamin@andrew.cmu.edu}}}
\author[1]{Willem-Jan van Hoeve\thanks{\texttt{vanhoeve@andrew.cmu.edu}}}
\affil[1]{Tepper School of Business, Carnegie Mellon University, Pittsburgh, PA USA}
\author[2]{Ted K. Ralphs\thanks{\texttt{ted@lehigh.edu}}}
\affil[2]{Department of Industrial and Systems Engineering, Lehigh University,
Bethlehem, PA USA}


\maketitle

\begin{abstract}

Dantzig-Wolfe reformulation is a widely used technique for obtaining stronger relaxations in the context of branch-and-bound methods for solving integer optimization problems. Arc-Flow reformulations are a lesser known technique related to dynamic programming that has experienced a resurgence as result of the recent popularization of decision diagrams as a tool for solving integer programs. Although these two reformulation techniques arose independently, the recently proposed solution paradigm known as \emph{column elimination} has revealed that they are in fact closely connected. Building on a unified formulation and notation, this study clarifies the theoretical connections and computational trade-offs between these two reformulations. 

We first revisit the known fact that the LP relaxations of these two reformulations yield the same dual bound. We then dig deeper, establishing conditions under which valid inequalities in the original, Dantzig-Wolfe, or Arc-Flow spaces can be translated across reformulations without loss of strength, and reinterpreting iterative strengthening methods, such as decremental state-space relaxation and column elimination, through the lens of cutting planes. To assess the potential impact of these insights empirically, we benchmark both reformulations under identical conditions on the vehicle routing problem with time windows using state-of-the-art column- and cut-generation techniques. The results identify clear contrasts: the Arc-Flow reformulation benefits from faster convergence and performs best when subproblems are highly relaxed or low-dimensional, whereas the Dantzig-Wolfe reformulation is more efficient when the master problem remains compact. Overall, our study provides a unified perspective and practical guidelines for choosing between Dantzig-Wolfe and Arc-Flow reformulations in large-scale integer optimization.

\end{abstract}



\section{Introduction}

Many real-world problems, such as vehicle routing, bin packing, and crew scheduling, have an underlying combinatorial structure and can be effectively solved by formulating them as integer programs (IPs). Although these IPs may have compact formulations, these formulations often suffer from significant solution symmetry and weak linear optimization problem (LP) relaxations, making them challenging for general-purpose solvers. 

To overcome these issues, a common strategy is the classical Dantzig-Wolfe (DW) reformulation, which reduces symmetry and strengthens the dual bounds obtained by solving the LP relaxation at the expense of introducing exponentially many variables \citep{vanderbeck2000dantzig}, each of which corresponds to one solution of a chosen relaxation. To make this approach computationally tractable, several techniques have been developed, including column generation, problem-specific relaxations and pricing algorithms, and dynamic cut generation (see, e.g., \cite{desrosiers2024branch}). DW combined with column and cut generation in the context of a branch-and-bound algorithm is one of the most effective methods for solving certain classes of large-scale IPs, despite computational challenges such as convergence issues \citep{pessoa2018automation}. 

A related line of research involves Arc-Flow (AF) formulations, which represent the feasible solutions of a bounded IP as paths from a root to a terminal node in a directed acyclic graph (DAG). In AF formulations derived from dynamic programming, the DAG is the unfolded state-transition graph: nodes represent states and arcs represent transitions (typically extending or modifying a partial solution by changing or fixing the value of a variable) \citep{martin1990polyhedral}. The origins of AF models trace back to \cite{ford1958constructing}, who studied maximal dynamic flows on time-expanded networks; \cite{shapiro1968dynamic}, who modeled the knapsack problem as a pseudopolynomial network flow; and \cite{wolsey1973generalized}, who introduced a general algorithm for solving integer optimization problems by recasting them as longest-route problems.

An AF reformulation can also be derived from a decision diagram, which in the context of optimization is a DAG encoding feasible solutions as paths \citep{bergman2016decision}. In fact, the state-transition graph of a dynamic program can be interpreted as a decision diagram under mild conditions. The main difference between AF formulations based on dynamic programming and those based on decision diagrams lies in how the DAG is constructed and manipulated; see \cite{de2022arc}, \cite{castro2022decision}, and \cite{hoeve2024} for reviews.

In contrast to DW, where the solutions to a given relaxation are explicitly enumerated by solving a subproblem, an AF \emph{reformulation} represents those solutions implicitly as paths in a DAG, preserving the improved dual bound while replacing the exponentially many variables associated with individual elements of the given relaxation with a potentially more compact set of arc variables \citep{de2022arc}. These arcs represent \emph{disaggregations} of the columns in DW, which enable their \emph{recombination} into new feasible solutions during column generation, accelerating convergence \citep{sadykov2013column}. Despite being studied for decades, AF has gained renewed attention due to advances in solvers, connections to decision diagrams, and strong results on combinatorial problems, such as cutting and packing \citep{de2023exact}, machine scheduling \citep{sadykov2013column}, and graph coloring \citep{van2022graph}.

While DW and AF are known to be related, formal results on their connection are limited, particularly in the context of advanced algorithms that combine solution of subproblem relaxations with methods for dynamically generating cutting planes, two core techniques in modern exact methods. Establishing these connections will inform the design of more efficient computational methods. Our study aims to establish such connections via a theoretical and computational analysis. First, we revisit the foundations of DW and AF in a unified framework, highlighting their relationships and trade-offs. Building on these foundations, we extend the analysis to settings with cutting planes and subproblem relaxations. For cutting planes, we establish the conditions under which valid inequalities in the original, DW, or AF spaces can be translated across reformulations, showing that these cuts preserve their strength and analyzing their computational impact. For subproblem relaxations, we reinterpret iterative strengthening methods, such as \emph{decremental state-space relaxation} and \emph{column elimination}, through the lens of cutting planes, clarifying their computational role. 

To empirically test our insights, we benchmark DW and AF under identical conditions on the vehicle routing problem with time windows (VRPTW), employing state-of-the-art techniques. The results reveal contrasts between the approaches across instance and methodological features, e.g., when time windows are tight or loose, or when the relaxation is weak or strong. Overall, our study provides new insights into the relative theoretical and computational strengths of the two reformulations.

The remainder of this paper is organized as follows. Section~\ref{s_reformulations} reviews the theoretical and computational foundations of DW and AF. Section~\ref{s_cuts} analyzes their relationship in the presence of cutting planes, and Section~\ref{s_subproblemRelaxation} examines iterative strengthening methods for subproblem relaxations. Section~\ref{s_VRPTW} illustrates our comparative study on the VRPTW, and Section~\ref{s_computational} reports numerical results. Finally, Section~\ref{s_conclusions} outlines guidelines for reformulation choice and future research directions.

\section{Theoretical and Computational Foundations} \label{s_reformulations}
We consider an integer program of the form \citep{vanderbeck2000dantzig}:

\journalvspace[-1.5cm]
\begin{mysubequations}{IP} \label{eq_IP}
\begin{align}
    z := \max \quad & \vb c\vb x \\
    \text{s.t.} \quad 
    & \vb A \vb x \leq \vb b \label{eq_IP-master}\\
    & \vb D \vb x \leq \vb d \label{eq_IP-sub}\\
    & \vb x \in \Z_{+}^{n},
\end{align}
\end{mysubequations}
\journalvspace[-1.5cm]

\noindent where $\vb A \in \Q_{+}^{m \times n}$ and $\vb D \in \Q_{+}^{l \times n}$ are nonnegative rational matrices and $\vb c \in \Q_{+}^{n}, \vb b \in \Q_{+}^{m}$, and $\vb d \in \Q_{+}^{l}$ are nonnegative rational vectors. The reason for separating equations~\eqref{eq_IP-master} from~\eqref{eq_IP-sub} is to define the \emph{subproblem}, a relaxation with the feasible region $ X:=\{\vb x\in \Z_{+}^{n} \, : \, \vb D \vb x \leq \vb d\}$, which we assume has some special structure that makes solution of it more tractable than the original problem. In many applications, for example, the subproblems can be solved effectively using a dynamic programming reformulation. Both $X$ and the feasible region of~\eqref{eq_IP} are assumed to be bounded and feasible. 

\subsection{Dantzig-Wolfe Reformulation}

Since $X$ is bounded, it contains a finite number of integer points 
leading to the following conceptual redefinition:
\begin{equation}
X = \left\{\vb x \in \R_{+}^{n} \, : \,
   \vb x=\sum_{q \in Q} \vb x^{q} \lambda_{q}, \,
   \sum_{q \in Q} \lambda_{q}= 1,\;
   \boldsymbol{\lambda} \in \{0,1\}^{|Q|}\right\} =\{\vb x^{q}\}_{q \in Q},
\label{eq_DWlink}
\end{equation}
where $Q$ indexes the solutions of $X$. The reformulation of $X$ yields the DW reformulation of~\eqref{eq_IP}:

\journalvspace[-1.5cm]

\begin{mysubequations}{DW} \label{eq_DW}
\begin{align}
    \max \quad & \sum_{q \in Q} (\vb c \vb x^{q}) \lambda_{q} \\
    \text{s.t.} \quad 
    & \sum_{q \in Q} (\vb A \vb x^{q}) \lambda_{q} \leq \vb b \label{eq_DW1}\\
    & \sum_{q \in Q} \lambda_{q} = 1 \label{eq_DW2}\\
    & \boldsymbol{\lambda} \in \{0,1\}^{|Q|}. \label{eq_DW3}
\end{align}
\end{mysubequations}

\journalvspace[-1.0cm]

Although the redefinition of $X$ in~\eqref{eq_DWlink} introduces exponentially many variables, dropping integrality restriction on $\boldsymbol{\lambda}$ yields $\conv(X)$, which means that solving the LP relaxation of~\eqref{eq_DW} yields 
\begin{equation}
    z^{\LP}_{\DW} = \max \left\{\sum_{q \in Q} (\vb c \vb x^{q}) \lambda_{q} \,:\, \sum_{q \in Q} (\vb A \vb x^{q}) \lambda_{q} \leq \vb b,
    \sum_{q \in Q} \lambda_{q} = 1, \boldsymbol{\lambda} \geq \vb 0 \right\},
\end{equation}
which is a potentially better dual bound than that yielded by solving the LP relaxation of~\eqref{eq_IP}. In fact, it is not hard to observe that solving the LP relaxation of~\eqref{eq_DW} yields the so-called \emph{decomposition bound}
\begin{equation}
    z^{\LP}_{\DW} = z_{D} := \max \left\{\vb c \vb x\,:\, \, \vb A \vb x \leq \vb b,\, \vb x \in \conv(X) \right\} \,\,\leq \,\, \max \left\{ \vb c \vb x\,:\, \vb A \vb x \leq \vb b,\, \vb D \vb x \leq \vb d, \vb x \in \R^{n}_{+}\right\} =: z^{LP}
\end{equation}
In a branch-and-price algorithm, the LP relaxation of~\eqref{eq_DW} is solved to obtain an improved dual bound on the optimal solution value. This LP relaxation is referred to as the \emph{Dantzig-Wolfe master problem} (DW--MP) and can be solved by column generation. In each iteration, a subset $Q' \subseteq Q$ of variables is considered, resulting in a restricted DW--MP (DW--RMP). To determine whether the current solution is optimal, we must determine whether any of the columns not included in the current DW--RMP have a positive reduced cost. If not, this is a proof that the solution to the current DW--RMP is also optimal for the DW--MP. The determination of whether the current solution is optimal is done by solving the following DW pricing problem:
\begin{equation}
    \overline{c}^{\star}:=-\pi_{0} + \max_{\vb x^{q} \in X} \big\{(\vb c \vb x^{q}) - \boldsymbol{\pi}\vb (\vb A \vb x^{q}) \big\}, \label{eq_DW--SP}
\end{equation}
\noindent where $\boldsymbol{\pi} \in \R_{+}^{m}$ and $ \pi_{0} \in \R$ are the dual variables associated with constraints~\eqref{eq_DW1} and~\eqref{eq_DW2}. If $\overline{c}^{\star} \leq 0$, no positive reduced cost column exists, and the current DW--RMP solution yields an optimal DW--MP solution by setting to zero all nongenerated variables. Otherwise, at least one positive reduced cost column $[(\vb c\vb x^{q}), \, (\vb A \vb x^{q}), \, 1]$ is added to DW--RMP and the procedure is repeated. 

\subsection{Arc-Flow Reformulation} \label{s_AF}

AF provides an alternative reformulation with similar properties. The difference is that the members of $X$ are represented as paths in a DAG rather than as discrete points in a polyhedron. The reformulation of~\eqref{eq_IP} using AF requires two elements: a DAG $\D$ with root node $r$ and terminal node $t$ in which each path is associated with a unique element of $X$; and a \emph{projection matrix} $\vb T$ that maps a unit flow from $r$ to $t$ in $\D$ (which denotes a single $r$-$t$ path) to a unique element of $X$. Such a DAG and an associated projection matrix always exists, as discussed in further detail below.

Unlike DW, AF seems to lack an obvious unified theoretical framework, since multiple techniques can be used to construct the underlying DAG, which is a necessary ingredient to any formulation. Building on the extended formulations proposed by~\citet{sadykov2013column}, we introduce a generic AF characterization that links the DAG to the subproblem $X$ through the linear projection $\vb T$, explicitly relating AF to~\eqref{eq_IP} and~\eqref{eq_DW}.
\\[-0.2in]
\noindent \begin{definition}\label{def_af}
A DAG $\D=(\N, \E)$, along with a projection matrix $\vb T \in \{0,1\}^{n \times |\E|}$, a root node $r \in \N$, and a terminal node $t \in \N$, defines an \emph{arc-flow representation} of $X$ if:
\begin{enumerate}
    \item[(i)] $\text{conv}(X) = \text{proj}_{\vb x}(F)$ where $F$ is the flow polytope defined over $\D$, namely,
    {\small
    \begin{equation*}
        F := \left\{(\vb x, \vb  y) \in \R_{+}^{n+|\E|} \,:\, \vb x = \vb T \vb y, \, \textstyle \sum_{e \in \delta^{+}(r)} y_{e} = 1, \, \sum_{e \in \delta^{+}(u)} y_{e} - \sum_{e \in \delta^{-}(u)} y_{e} = 0 \;\; \forall u \in \N \setminus\{r,t\}\right\},
    \end{equation*}
    }
    where $\delta^{+}(u):=\{e=(u,v) \in \E\}$ and $\delta^{-}(u):=\{e=(v,u) \in \E\}$; and
    \item[(ii)] $X = \text{proj}_{\vb x}(Y)$, where $Y:=\left\{(\vb x, \vb y) \in F \, : \, \vb y \in \{0,1\}^{|\E|} \right\}$. \hfill$\blacksquare$
\end{enumerate}
\label{d_arcflowRefor}
\end{definition}
\ \\[-0.2in]
One obvious way of obtaining a DAG satisfying this definition is by expressing the problem of optimizing over $X$ as a dynamic program, as described in~\citet[p. 308]{wolsey1999integer}. In this construction (which assumes that $X$ is an independence system), the root node represents the zero solution and traversing an arc corresponds to changing the value of a variable. Whenever the arcs in a given DAG correspond to changing (or setting) the value of a single variable, then a projection matrix $\vb T$ exists and we have an arc-flow representation satisfying Definition~\ref{def_af}.

Just as with the DW reformulation, even though the representation of $X$ as paths in a DAG may increase the number of variables substantially, it nevertheless provides a foundation for deriving exact extended formulations, the solution of whose LP relaxations yield improved dual bounds. 

Given $\D=(\N, \E)$ and its associated projection matrix $\vb T$ satisfying Definition~\ref{def_af}, the associated \textit{AF reformulation} of~\eqref{eq_IP} is:

\journalvspace[-1.0cm]
\begin{mysubequations}{AF}
  \label{eq_AF}%
  \begin{align}
    \max \quad & (\vb c \vb T) \vb y  \\
    \text{s.t.} \quad
    & (\vb A \vb T) \vb y \leq \vb b \label{eq_AF1}\\
    & \sum_{e \in \delta^{+}(r)} y_{e} = 1  \label{eq_AF2} \\ 
    & \sum_{e \in \delta^{+}(u)} y_{e} - \sum_{e \in \delta^{-}(u)} y_{e} = 0  &&  u \in \N \setminus\{r,t\}  \label{eq_AF3} \\ 
    & \vb y \in \{0,1\}^{|\E|}  \label{eq_AF4},
  \end{align}
\end{mysubequations}

\journalvspace[-0.75cm]

\noindent where $\vb y \in \{0,1\}^{|\E|}$ implies $\vb x = \vb T \vb y \in \Z_{+}^{n}$. We denote the \emph{flow constraints}~\eqref{eq_AF2} and~\eqref{eq_AF3} as $\vb F \vb y = \vb f$. As with DW, the LP relaxation of~\eqref{eq_AF} yields an improved dual bound 
\begin{equation}
    z^{LP}_{AF} = \max \left\{(\vb c \vb T) \vb y \,:\, (\vb A \vb T) \vb y \leq \vb b,
    \,\, \vb F \vb y = \vb f,
    \,\, \vb y \geq \vb 0 \right\}
\end{equation}
This bound can be computed by column-and-row (or, arc-and-node) generation \citep{de2023exact}. In each iteration, a subset of arcs $\E' \subseteq \E$ and nodes $\N' \subseteq \N$ with $r,t \in \N'$ is considered in a restricted MP (AF--RMP). Given an AF--RMP solution, the pricing step solves:
\begin{equation}
    \overline{c}^{\star}:=-\rho_{r} + \max_{\vb y^{p} \in Y}\big\{(\vb c \vb T) \vb y^{p} - \boldsymbol{\pi}(\vb A \vb T) \vb y^{p}\big\}, \label{eq_AF--SP}
\end{equation}
\noindent where $P$ indexes the solutions in $Y$, i.e., $Y=\{\vb y^{p}\}_{p \in P}$; $\boldsymbol{\pi} \in \mathbb{R}_{+}^{m}$ are the AF--RMP dual variables associated with constraints~\eqref{eq_AF1}; and $\boldsymbol{\rho} \in \mathbb{R}^{|\mathcal{N}'|}$ are the AF--RMP dual variables associated with constraints~\eqref{eq_AF2} and~\eqref{eq_AF3}, with \( \rho_t := 0 \) for notational conciseness. Due to flow constraints $\vb F \vb y = \vb f$, \eqref{eq_AF--SP} is a longest path problem over $\D$ with modified arc costs $\overline{\vb c} :=(\vb c \vb T) - \boldsymbol{\pi}(\vb A \vb T)$. If $\overline{c}^{\star}\leq 0$, the current AF--RMP solution yields an optimal AF--MP solution by setting to zero all nongenerated variables. Otherwise, there exists at least one path $\vb y^{p} \in Y$ with $\overline{\vb c}\vb y^{p} > \rho_{r}$, and therefore the arcs and nodes used by $\vb y^{p}$ are added to AF--RMP (if not already in $\E'$ and $\N'$). Thus, the flow conservation constraints~\eqref{eq_AF3} in AF--RMP are generated \emph{on demand}, with the constraint for node $u \in \N$ added when $u$ first appears as the head or tail of an AF--RMP arc variable. The exactness of this procedure is proved in \cite{de2023exact}. The discussion above extends naturally to~\eqref{eq_IP} with a block-diagonal structure, as detailed in Appendix~\ref{s_blockDiagonal}.

\subsection{Connections Between Reformulations} \label{ss_connections}

In this section, we revisit the key theoretical relationships between DW and AF. First discussed in \cite{sadykov2013column} as formal claims or (informal) observations within the broader context of extended formulations, we derive tailored proofs that establish a deeper connection between DW and AF not previously made explicit, also enabling the analysis of DW and AF under cutting planes and subproblem relaxations. 
We first introduce notation: the feasible set of the LP relaxation of~\eqref{eq_DW} is defined as
$
\mathrm{DW}_{\LP}:=\big\{(\vb x, \boldsymbol{\lambda}) \in \R_{+}^{n+|Q|} \, : \, \vb x = \sum_{q \in Q} \vb x^{q} \lambda_{q}, \, \sum_{q \in Q} (\vb A \vb x^{q}) \lambda_{q} \leq \vb b, \, \sum_{q \in Q} \lambda_{q} =1\big\},
$
and the feasible set of the LP relaxation of~\eqref{eq_AF} is defined as
$
\mathrm{AF}_{\LP}:=\big\{(\vb x, \vb y) \in \R_{+}^{n+|\E|} \, : \, \vb x = \vb T \vb y, \, (\vb A \vb T) \vb y\leq \vb b, \, \vb F \vb y = \vb f \big\}.
$
Proposition~\ref{p_bound} compares the strength of the two reformulations.

\begin{proposition}[\textbf{Observation 1 in \cite{sadykov2013column}}] \label{p_bound}
\begin{equation*}
    z_{D} \, = \, z^{LP}_{DW} \, = \, z^{LP}_{AF} \, \leq \, z^{LP}.
\end{equation*}
\end{proposition}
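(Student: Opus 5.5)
The plan is to prove the chain through projections onto the $\vb x$-space. I will show that the projections of $\mathrm{DW}_{\LP}$ and $\mathrm{AF}_{\LP}$ onto $\vb x$ both equal $P_D:=\{\vb x \in \R^n_+ : \vb A\vb x\le \vb b,\ \vb x\in\conv(X)\}$. The objectives are $\sum_q (\vb c\vb x^q)\lambda_q = \vb c\vb x$ in DW and $(\vb c\vb T)\vb y=\vb c\vb x$ in AF. Both therefore depend only on $\vb x$, so equal projections give $z^{LP}_{DW}=z^{LP}_{AF}=\max\{\vb c\vb x:\vb x\in P_D\}=z_D$.

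For DW, the map $\boldsymbol\lambda\mapsto \sum_q \vb x^q\lambda_q$ sends the simplex $\{\boldsymbol\lambda\ge \vb 0,\ \sum_q\lambda_q=1\}$ exactly onto $\conv(\{\vb x^q\}_{q\in Q})=\conv(X)$, by \eqref{eq_DWlink} and the definition of convex hull. The constraint \eqref{eq_DW1} is linear, so $\sum_q(\vb A\vb x^q)\lambda_q=\vb A\vb x$. Hence $\proj_{\vb x}(\mathrm{DW}_{\LP})=\{\vb x\in\conv(X):\vb A\vb x\le\vb b\}=P_D$.

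For AF, observe that $\mathrm{AF}_{\LP}=\{(\vb x,\vb y)\in F:\vb A\vb x\le \vb b\}$, since $(\vb A\vb T)\vb y=\vb A\vb x$ once $\vb x=\vb T\vb y$. The side constraint involves only $\vb x$, so projection commutes with intersecting by it:
\[
\proj_{\vb x}(\mathrm{AF}_{\LP})=\proj_{\vb x}(F)\cap\{\vb x:\vb A\vb x\le\vb b\}.
\]
Definition~\ref{def_af}(i) gives $\proj_{\vb x}(F)=\conv(X)$, so this set is again $P_D$. Nonnegativity of $\vb x$ is automatic because $\vb T\ge 0$ and $\vb y\ge 0$, and in any case $\conv(X)\subseteq\R^n_+$. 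The only point needing care is the commuting step. If $\vb x$ lies in the right-hand side, pick any $\vb y$ with $(\vb x,\vb y)\in F$; it automatically satisfies $(\vb A\vb T)\vb y\le \vb b$. The reverse inclusion is trivial.

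Finally, $z_D\le z^{LP}$ follows from $\conv(X)\subseteq\{\vb x\in\R^n_+:\vb D\vb x\le\vb d\}$. The right-hand set is convex and contains $X$, so $P_D$ lies in the feasible region of the LP relaxation of \eqref{eq_IP}. This also confirms that the maxima are attained, since $P_D$ is a nonempty polytope: it is feasible because the IP is feasible, and bounded because $X$ is finite. I do not expect a real obstacle. The step that needs the most care is the AF projection argument, where everything rests on Definition~\ref{def_af}(i) and on the side constraints being expressible in $\vb x$ alone.
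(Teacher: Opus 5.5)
Your proposal is correct and follows essentially the same route as the paper. Both project $\mathrm{DW}_{\LP}$ and $\mathrm{AF}_{\LP}$ onto the $\vb x$-space and identify each projection with $\{\vb x\in\R^n_+ : \vb A\vb x\le\vb b\}\cap\conv(X)$, using Definition~\ref{def_af}(i) for AF, note that the common objective depends only on $\vb x$, and obtain $z_D\le z^{\LP}$ from $\conv(X)\subseteq\{\vb x\in\R^n_+:\vb D\vb x\le\vb d\}$; your explicit check that projection commutes with the $\vb x$-only side constraint and your attainment remark simply make implicit steps of the paper's argument explicit.
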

\begin{proof}
By definition of $\mathrm{DW}_{\LP}$, we have
$\proj_{\vb x}(\mathrm{DW}_{\LP}) = \left\{ \vb x \in \R^{n}_{+} \, : \, \vb A \vb x \leq \vb b \right\} \, \cap \, \big\{\vb x = \sum_{q \in Q} \vb x^{q} \lambda_{q} \, : \,\sum_{q \in Q}\lambda_{q} = 1,   \, \boldsymbol{\lambda} \geq \vb 0 \big\} = \left\{ \vb x \in \R^{n}_{+} \, : \, \vb A \vb x \leq \vb b \right\} \, \cap \, \conv(X)$. Similarly, by definition of $\mathrm{AF}_{\LP}$ and
Definition~\ref{d_arcflowRefor}(i), $\proj_{\vb x}(\mathrm{AF}_{\LP}) = \left\{ \vb x \in \R^{n}_{+} \, : \, \vb A \vb x \leq \vb b \right\} \, \cap \, \left\{\vb x =\vb T \vb y \, : \, \vb F \vb y = \vb f, \vb y \geq \vb 0\right\} = \left\{ \vb x \in \R^{n}_{+} \, : \, \vb A \vb x \leq \vb b \right\} \, \cap \, \conv(X)$. Both LP relaxations maximize $\vb c \vb x = \vb c \sum_{q \in Q} \vb x^{q} \lambda_{q} = \vb c (\vb T \vb y)$ over the same convex set, hence $z_{\DW}^{\LP}=z_{\AF}^{\LP}=z_{D}$. Finally, since $\conv(X)\subseteq \{\vb x \in \R^{n}_{+} \, : \, \vb D\vb x \leq \vb d\}$, it follows that $z_{D} \leq z^{\LP}$. 
\end{proof}

Proposition~\ref{p_functions} relates the number of columns in DW to the number of paths in AF. Based on this, Remark~\ref{r_symmetry} highlights a symmetry drawback in AF.

\begin{proposition} \label{p_functions}
The number of root-to-terminal paths in the DAG of AF is at least as large as the number of DW columns.
\end{proposition}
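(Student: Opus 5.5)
The natural approach is to exhibit a surjection from the set of root-to-terminal paths in $\D$ onto the set of DW columns $\{\vb x^{q}\}_{q\in Q}=X$. We then conclude $|P|\ge |Q|$.

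First, identify paths with the points of $Y$. Every $r$-$t$ path in the DAG $\D$ gives a binary vector $\vb y^{p}\in\{0,1\}^{|\E|}$, its arc-incidence vector, and this vector satisfies the flow constraints $\vb F\vb y=\vb f$. So each path belongs to $Y$. Conversely, take any $\vb y\in Y$. Since $\D$ is acyclic, the flow constraints with unit outflow at $r$ and binary values force the support of $\vb y$ to be a single $r$-$t$ path: flow decomposition has no cycles available, and the total flow is one. Hence the $r$-$t$ paths are in bijection with $Y=\{\vb y^{p}\}_{p\in P}$, and the number of paths equals $|P|$.

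Second, define the map $\phi:Y\to\R^{n}$ by $\phi(\vb y)=\vb T\vb y$. By Definition~\ref{d_arcflowRefor}(ii), $X=\proj_{\vb x}(Y)=\{\vb T\vb y:\vb y\in Y\}$, so $\phi$ maps $Y$ onto $X$. Each DW column $\vb x^{q}$, $q\in Q$, is a distinct element of $X$ by \eqref{eq_DWlink}. Surjectivity therefore gives, for each $q$, at least one path $p$ with $\vb T\vb y^{p}=\vb x^{q}$. Choosing one such path for each $q$ defines an injection $Q\to P$, because distinct columns have distinct images under $\phi$ and so cannot share a preimage. Thus $|P|\ge|Q|$.

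The only delicate point is the first step: the claim that elements of $Y$ correspond exactly to $r$-$t$ paths. This needs acyclicity, and it needs to rule out stray flow at nodes with no inflow. I would handle that by noting that conservation holds at every node other than $r$ and $t$. In a DAG, any nonzero binary conservative flow component disjoint from $r$ would have to contain a cycle, so no such component exists. I would also remark that the inequality can be strict, for instance when several paths project to the same $\vb x$ through symmetric orderings of variable fixings. This motivates the symmetry comment in Remark~\ref{r_symmetry}.
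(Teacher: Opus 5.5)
Your proof is correct and follows the paper's argument. The paper likewise uses Definition~\ref{d_arcflowRefor}(ii) to define the surjection $\varphi:P\to Q$ with $\vb T\vb y^{p}=\vb x^{\varphi(p)}$, then fixes a selection $\sigma(q)\in\varphi^{-1}(q)$, which is exactly your injection $Q\to P$. Your first step, identifying the binary points of $Y$ with the $r$-$t$ paths of $\D$ via acyclicity, is a detail the paper takes for granted when it calls elements of $Y$ paths; note also that the paper uses $\phi$ for the inverse-image map, not for $\vb y\mapsto\vb T\vb y$.
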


\begin{proof}
 By Definition~\ref{d_arcflowRefor}(ii), each $\vb y^{p}\in Y$ projects to exactly one $\vb x^{q}\in X$ via $\vb T\vb y^{p}=\vb x^{q}$. Therefore, we can define the index map $\varphi:P\;\longrightarrow\;Q,
   \;
   \varphi(p):=q
   \;\text{ with }\;
   \vb T\vb y^{p}=\vb x^{q}.
$
Note that $\varphi$ is surjective but generally not injective, because two different paths
$\vb y^{p},\vb y^{p'}$ can project to the same $\vb x^{q}$, i.e., $\varphi(p)=\varphi(p')$. Thus, the inverse image
$
    \phi:= \, \varphi^{-1}\, \, : \, \,Q\,\,\longrightarrow\,\,2^{P}\setminus\{\emptyset\},
   \;
   q\,\,\longmapsto\,\,\{p\in P:\varphi(p)=q\},
$
is a nonempty set-valued map. Henceforth, we fix a selection $\sigma: Q \rightarrow P$ with $\sigma(q) \in \phi(q)$ for all $q \in Q$. The surjection of $\varphi$ implies $|Q|\le|P|$.
\end{proof}

\begin{remark}
     AF has a symmetry drawback relative to DW, stemming from the non-uniqueness of lifted solutions, i.e., $\phi(\cdot)$ is not necessarily a singleton. This may result in redundant paths encoded in the DAG and increased computational overhead.
    \label{r_symmetry}
\end{remark}

Remark~\ref{r_lifting} analyzes the computational complexity of lifting solutions from the original to the AF space. Building on this, Proposition~\ref{p_solutions} establishes a constructive correspondence between DW and AF solutions, while Remark~\ref{r_subproblemEquivalence} relates their pricing problems.

\begin{remark}[\textbf{Observation 4 in \cite{sadykov2013column}}]  \label{r_lifting}
    Given $\vb x^{q} \in X$, a lifting procedure outputs $\vb y^{p} \in Y$ such that $\vb x^{q} = \vb T \vb y^{p}$. One generic lifting procedure is solving $
    \max \left\{ 0 \, : \, \vb T \vb y = \vb x^{q}, \,  \vb F \vb y = \vb f, \, \vb y \in \{0,1\}^{|\E|} \right\}
    $,
    which is NP-hard. Dedicated lifting procedures can often recover the corresponding path through a linear scan of the DAG. For example, \cite{martin1990polyhedral} propose a greedy lifting procedure for AF models based on dynamic programming that runs in polynomial time in the size of $\D$.
\end{remark}
\begin{proposition}  \label{p_solutions}
  For every $(\bar{\vb x},\bar{\boldsymbol\lambda})\in \mathrm{DW}_{\LP}$, there exists $\bar{\vb y}$ with $(\bar{\vb x},\bar{\vb y})\in\mathrm{AF}_{\LP}$. Similarly, for every $(\bar{\vb x},\bar{\vb y})\in \mathrm{AF}_{\LP}$, there exists $\bar{\boldsymbol\lambda}$ with $(\bar{\vb x},\bar{\boldsymbol\lambda})\in \mathrm{DW}_{\LP}$.
\end{proposition}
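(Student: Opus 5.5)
Both directions should be proved constructively, using the maps $\varphi$ and $\sigma$ from the proof of Proposition~\ref{p_functions}. For the first direction, take $(\bar{\vb x},\bar{\boldsymbol\lambda})\in\mathrm{DW}_{\LP}$ and define the lifted flow
\[
\bar{\vb y}:=\sum_{q\in Q}\bar\lambda_q\,\vb y^{\sigma(q)},
\]
so each DW column carries its weight onto one fixed path that projects to it.
\begin{itemize}
\item[(a)] Each $\vb y^{\sigma(q)}\in Y$ satisfies the flow constraints $\vb F\vb y=\vb f$. Since the system is linear and $\sum_q\bar\lambda_q=1$, the convex combination also satisfies $\vb F\bar{\vb y}=\vb f$.
\item[(b)] Nonnegativity $\bar{\vb y}\geq\vb 0$ holds because $\bar{\boldsymbol\lambda}\geq\vb 0$ and each path vector is binary.
\item[(c)] By linearity of $\vb T$ and $\vb T\vb y^{\sigma(q)}=\vb x^q$, we get $\vb T\bar{\vb y}=\sum_q\bar\lambda_q\vb x^q=\bar{\vb x}$.
\item[(d)] Consequently $(\vb A\vb T)\bar{\vb y}=\vb A\bar{\vb x}=\sum_q(\vb A\vb x^q)\bar\lambda_q\leq\vb b$. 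Hence $(\bar{\vb x},\bar{\vb y})\in\mathrm{AF}_{\LP}$.
\end{itemize}

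For the converse, take $(\bar{\vb x},\bar{\vb y})\in\mathrm{AF}_{\LP}$. There are two possible routes.
\begin{itemize}
\item[(1)] \emph{Via Definition~\ref{d_arcflowRefor}(i).} Since $\bar{\vb y}\geq\vb 0$ satisfies the flow constraints, $(\bar{\vb x},\bar{\vb y})\in F$, so $\bar{\vb x}\in\proj_{\vb x}(F)=\conv(X)$. Therefore $\bar{\vb x}=\sum_q\bar\lambda_q\vb x^q$ for some convex multipliers $\bar{\boldsymbol\lambda}\geq\vb 0$ with $\sum_q\bar\lambda_q=1$. The master constraints then follow from $\sum_q(\vb A\vb x^q)\bar\lambda_q=\vb A\bar{\vb x}=(\vb A\vb T)\bar{\vb y}\leq\vb b$.
\item[(2)] \emph{Constructive route.} This gives a construction that uses the paths and complements the representation above.
\begin{itemize}
\item[(i)] Apply flow decomposition on the DAG $\D$. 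Because $\D$ is acyclic, there are no cycles, so a unit $r$-$t$ flow decomposes as $\bar{\vb y}=\sum_{p\in P}\mu_p\vb y^p$ with $\mu\geq\vb 0$ and $\sum_p\mu_p=1$. The total is $1$ because the outflow of $r$ equals $1$.
\item[(ii)] Aggregate the path weights through $\varphi$ by setting $\bar\lambda_q:=\sum_{p\in\phi(q)}\mu_p$.
\item[(iii)] Verify: $\sum_q\bar\lambda_q=1$, $\bar{\boldsymbol\lambda}\geq\vb 0$, and $\sum_q\bar\lambda_q\vb x^q=\sum_p\mu_p\vb T\vb y^p=\vb T\bar{\vb y}=\bar{\vb x}$. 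The master constraints then hold exactly as in route~(1).
\end{itemize}
\end{itemize}

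The main obstacle is step (2)(i), the flow decomposition. It requires every $r$-$t$ path in $\D$ to correspond to a binary point of $Y$, i.e.\ to an element $\vb y^p$ indexed by $P$. It also requires the flow constraints to force all flow to terminate at $t$. I would argue the latter by noting there is no conservation constraint at $t$, while every other non-root node conserves flow and $\D$ is acyclic. The standard path-stripping argument then applies: repeatedly take a positive-flow $r$-$t$ path, subtract its minimum arc value, and recurse. This terminates after finitely many steps because at least one arc drops to zero each time.

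If route (2) runs into technical trouble, for example paths of $\D$ that are not in $Y$ or dead-end nodes, I would fall back on route (1). Route (1) relies only on Definition~\ref{d_arcflowRefor}(i) and is already implicit in the proof of Proposition~\ref{p_bound}.
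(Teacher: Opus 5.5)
Your proof is correct and essentially the paper's: the forward direction lifts each column to the fixed path $\vb y^{\sigma(q)}$, and your route (2) for the converse decomposes $\bar{\vb y}$ into path flows on the acyclic $\D$ and aggregates them through $\phi$, exactly as the paper does. Route (1) via Definition~\ref{d_arcflowRefor}(i) is also a valid shortcut, the same one used in the proof of Proposition~\ref{p_bound}. The obstacle you flag for route (2) does not arise, because the incidence vector of any $r$-$t$ path is binary and satisfies $\vb F\vb y=\vb f$, so it is some $\vb y^{p}$ with $p\in P$.
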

\begin{proof}
    Let $(\xbar,\lbar)\in \mathrm{DW}_{\LP}$. Define $ \ybar :=  \sum_{q \in Q}\vb y^{\sigma(q)}\,\bar{\lambda}_{q}$ with $\vb 1^{\top}\lbar = 1$ and $ \lbar\ge \vb 0$. The existence of $\vb y^{\sigma(q)}$ for every $q\in Q$ is guaranteed by Definition~\ref{d_arcflowRefor}(ii). It follows that $
       \vb T\ybar= \vb T \left( \sum_{q \in Q}\vb y^{\sigma(q)} \bar{\lambda}_{q} \right)
                  = \sum_{q \in Q} \big(\vb T\vb y^{\sigma(q)}\big) \bar{\lambda}_{q}
                  = \sum_{q \in Q}\vb x^{q} \bar{\lambda}_{q}
                  = \xbar, $ $
       \vb F \ybar= \vb F \left( \sum_{q \in Q}\vb y^{\sigma(q)} \bar{\lambda}_{q} \right)
                  = \sum_{q \in Q} \big(\vb F\vb y^{\sigma(q)}\big) \bar{\lambda}_{q}
                  = \sum_{q \in Q}\vb f \bar{\lambda}_{q}
                  = \vb f, \quad $, and $
       (\vb A\vb T)\ybar= \vb A \xbar
                  = \sum_{q \in Q} (\vb A \vb x^{q}) \bar{\lambda}_{q}
                  \;\leq\; \vb b.
    $
    Thus, $(\xbar,\ybar)\in\mathrm{AF}_{\LP}$. For the converse, let $(\xbar,\ybar)\in \mathrm{AF}_{\LP}$. Since $\D$ is acyclic, the flow decomposition theorem \cite[p. 33]{ahuja1993network} guarantees that nonnegative arc flow $\ybar$ can be decomposed into a (not necessarily unique) path flow, that is, $\ybar=\sum_{p \in P}\vb y^{p} \theta_{p}$ with $\vb 1^{\top} \boldsymbol{\theta} = 1$ and $\boldsymbol{\theta} \geq \vb 0$,
    where $\theta_{p}$ is the flow through path $\vb y^{p}$. For every $q \in Q$, set $\bar{\lambda}_{q}:=\sum_{p \in \phi(q)} \theta_{p}$. Consequently, $\mathbf 1^{\top}\lbar=1$ and $\lbar \geq \vb 0$. Since
    $
      \bar{\vb x}
        = \vb T\bar{\vb y}
         = \vb T\Bigl(\sum_{p\in P}\vb y^{p}\theta_{p}\Bigr)
         = \sum_{p\in P}\big(\vb T\vb y^{p}\big)\theta_{p}
         = \sum_{p\in P}\vb x^{\varphi(p)}\theta_{p}
         = \sum_{q\in Q}\vb x^{q}\,\bar{\lambda}_{q} $ and $ 
      \sum_{q\in Q}\bigl(\vb A\vb x^{q}\bigr)\bar{\lambda}_{q}
        = \vb A\bar{\vb x}
         = (\vb A\vb T)\bar{\vb y}
         \leq \vb b
    $, then $(\xbar,\lbar)\in\mathrm{DW}_{\LP}$. 
\end{proof}

\begin{remark} [\textbf{Remark 3 in \cite{sadykov2013column}}] 
For any $\boldsymbol{\pi} \in \R^{m}_{+}$, we have that $\max_{\vb x^{q} \in X} \left\{(\vb c \vb x^{q}) - \boldsymbol{\pi}\vb (\vb A \vb x^{q}) \right\} = \max_{\vb y^{p} \in Y}\{(\vb c \vb T) \vb y^{p} - \boldsymbol{\pi}(\vb A \vb T) \vb y^{p}\}$. As a result, in AF, one can solve \eqref{eq_DW--SP} and \emph{lift} the solution, whereas in DW, one can solve \eqref{eq_AF--SP} and \emph{project} the solution.
\label{r_subproblemEquivalence}
\end{remark}

\subsection{Computational Implications in Practice} \label{s_computationalImplications}

In this section, we discuss the computational implications of these connections when solving the reformulations. From the subproblem perspective, pricing is often performed in the AF space even under a DW reformulation, enabling the use of problem-specific techniques unavailable in the original space. Pricing in the AF space may rely on either an implicit or explicit DAG: DW-based methods typically use implicit representations via labeling algorithms (see, e.g., \cite{desrosiers2024branch}), while AF approaches employ explicit DAGs and shortest-path algorithms (see, e.g., \cite{karahalios2025column}). The implicit approach supports larger DAGs, and the explicit one enables fine-grained manipulation.

From the master problem perspective, DW and AF differ in how they represent subproblem solutions: DW uses compact (aggregated) columns, while AF employs arc-based (disaggregated) ones. Aggregated columns yield smaller RMPs and faster intermediate LP solves, whereas disaggregated columns introduce multiple variables and constraints per path, enlarging the RMP and increasing LP solution time. Consequently, the compact DW representation is more advantageous as the AF space grows. Conversely, disaggregated columns in AF enable path \emph{recombination} into new subproblem solutions, accelerating convergence \citep{sadykov2013column}. Overall, AF is expected to perform better when recombination is sufficiently effective to offset its higher per-iteration cost; otherwise, DW is likely to perform better due to its lower iteration cost.

\section{Cutting Planes} \label{s_cuts}

Since integrality constraints are imposed on the $\vb x$-, $\boldsymbol{\lambda}$-, and $\vb y$-variables in~\eqref{eq_IP},~\eqref{eq_DW}, and~\eqref{eq_AF}, valid inequalities can be derived in any of these spaces. In this section, we examine the relationships between DW and AF when introducing three classes of cuts, namely those derived in the original space (\S\ref{s_cutsX}), in the DW space (\S\ref{s_cutsLambda}), and in the AF space (\S\ref{s_cutsY}). Cuts in the original and DW spaces have been extensively studied \citep{ralphs2006decomposition, desaulniers2011cutting}. Despite recent progress by \cite{karahalios2025column}, no work has systematically addressed the translation of such cuts into the AF space. 

\subsection{Cuts in the Original Space} \label{s_cutsX}

A pair $(\boldsymbol{\alpha}, \alpha_{0}) \in \R^{n+1}$ is a valid inequality for~\eqref{eq_IP} if $\boldsymbol{\alpha} \vb x \leq \alpha_{0}$ holds for all $\vb x \in \Z_{+}^{n}$ satisfying \eqref{eq_IP-master}--\eqref{eq_IP-sub}. In~\eqref{eq_IP}, the inequality may be appended to the constraints $\vb A \vb x \leq \vb b$ or to the constraints $\vb D \vb x \leq \vb d$. In the former case, by \eqref{eq_DWlink}, the inequality $\sum_{q \in Q} (\boldsymbol{\alpha} \vb x^{q}) \lambda_{q} \leq \alpha_{0}$ is added to~\eqref{eq_DW}. In DW--RMP, an additional dual variable $\mu \geq 0$ is introduced and the pricing objective~\eqref{eq_DW--SP} subtracts $\mu(\boldsymbol{\alpha}\vb x^{q})$. Similarly, by Definition \ref{d_arcflowRefor}, the constraint $(\boldsymbol{\alpha} \vb T)  \vb y \leq \alpha_{0}$ is added to~\eqref{eq_AF}, and the pricing objective~\eqref{eq_AF--SP} subtracts $\mu(\boldsymbol{\alpha} \vb T) \vb y^{p}$. Note that the cut $(\boldsymbol{\alpha}, \alpha_{0})$ has the same computational impact in DW and AF, adding one constraint to the MP. With such cuts, both reformulations yield the same bound.

 \begin{proposition}  \label{p_projectionStrength}
 Let $\mathrm{DW}_{\LP}^{\text{cut}}:=\mathrm{DW}_{\LP} \, \cap \, \big\{\boldsymbol{\lambda} \geq \vb 0 \, : \sum_{q \in Q} (\boldsymbol{\alpha} \vb x^{q}) \lambda_{q} \leq \alpha_{0}\big\}$ and $\mathrm{AF}_{\LP}^{\text{cut}}:=\mathrm{AF}_{\LP} \, \cap \, \big\{\vb y \geq \vb 0 \, : (\boldsymbol{\alpha} \vb T)  \vb y \leq \alpha_{0} \big\}$. Then, $\proj_{\vb x}\big(\mathrm{DW}_{\LP}^{\text{cut}}\big)=\proj_{\vb x}\big(\mathrm{AF}_{\LP}^{\text{cut}}\big)$. 
 \end{proposition}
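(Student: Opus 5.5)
The plan is to reuse the correspondence of Proposition~\ref{p_solutions} and check that it respects the extra cut. In both constructions there, the $\vb x$-component is kept fixed. The new cut is a linear inequality in $\vb x$ alone, once we substitute $\vb x = \sum_{q \in Q} \vb x^{q}\lambda_q$ in DW and $\vb x = \vb T\vb y$ in AF. So membership in the cut sets should transfer automatically. I will prove the two inclusions $\proj_{\vb x}(\mathrm{DW}_{\LP}^{\text{cut}}) \subseteq \proj_{\vb x}(\mathrm{AF}_{\LP}^{\text{cut}})$ and its converse separately.

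For the first inclusion, take $\xbar \in \proj_{\vb x}(\mathrm{DW}_{\LP}^{\text{cut}})$ with a witness $\lbar$. Proposition~\ref{p_solutions} supplies $\ybar := \sum_{q\in Q} \vb y^{\sigma(q)}\bar\lambda_q$ with $(\xbar,\ybar)\in \mathrm{AF}_{\LP}$ and $\vb T\ybar = \xbar$. Then
\[
(\boldsymbol\alpha\vb T)\ybar = \boldsymbol\alpha\xbar = \sum_{q\in Q}(\boldsymbol\alpha\vb x^q)\bar\lambda_q \le \alpha_0,
\]
so $(\xbar,\ybar)\in\mathrm{AF}_{\LP}^{\text{cut}}$.

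For the converse, take $(\xbar,\ybar)\in \mathrm{AF}_{\LP}^{\text{cut}}$. Decompose $\ybar$ into path flows $\theta$ via the flow decomposition theorem, as in the proof of Proposition~\ref{p_solutions}. Then set $\bar\lambda_q := \sum_{p\in\phi(q)}\theta_p$, which gives $(\xbar,\lbar)\in\mathrm{DW}_{\LP}$ and $\sum_{q\in Q}\vb x^q\bar\lambda_q = \xbar$. The same chain of equalities then yields
\[
\sum_{q\in Q}(\boldsymbol\alpha\vb x^q)\bar\lambda_q = \boldsymbol\alpha\xbar = (\boldsymbol\alpha\vb T)\ybar \le \alpha_0 .
\]

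I expect no real obstacle, since the argument rests entirely on Proposition~\ref{p_solutions}. The one point needing care is that the lifted or decomposed solution must reproduce exactly the same $\xbar$, not merely some point of $\conv(X)$. This holds because both constructions in that proof preserve $\vb x$ by design. I will therefore state this explicitly, noting the identities $\vb T\ybar=\xbar$ and $\sum_{q\in Q}\vb x^q\bar\lambda_q=\xbar$, so that the cut, which depends only on $\vb x$, is evaluated at the same point in both spaces.

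If desired, I could give an alternative one-line argument along the lines of Proposition~\ref{p_bound}. Both projections equal $\{\vb x\in\R^n_+ : \vb A\vb x\le\vb b,\ \boldsymbol\alpha\vb x\le\alpha_0\}\cap\conv(X)$, using $\conv(X)=\proj_{\vb x}(F)$ from Definition~\ref{d_arcflowRefor}(i). I would present this as a remark.
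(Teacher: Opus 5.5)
Your proof is correct, but its main line differs from the paper's; the paper's proof is exactly the set-level identity you mention only as a closing remark. The paper writes $\proj_{\vb x}(\mathrm{DW}_{\LP}^{\text{cut}})$ as $\{\vb x\in\R^n_+ : \vb A\vb x\le\vb b,\ \boldsymbol\alpha\vb x\le\alpha_0\}$ intersected with $\{\sum_{q\in Q}\vb x^q\lambda_q : \vb 1^\top\boldsymbol\lambda=1,\ \boldsymbol\lambda\ge\vb 0\}=\conv(X)$. It then replaces $\conv(X)$ by $\{\vb T\vb y : \vb F\vb y=\vb f,\ \vb y\ge\vb 0\}$ using Definition~\ref{d_arcflowRefor}(i), and reads off $\proj_{\vb x}(\mathrm{AF}_{\LP}^{\text{cut}})$, just as in Proposition~\ref{p_bound}.

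You instead prove the two inclusions pointwise, reusing the witnesses of Proposition~\ref{p_solutions}: the lift $\ybar=\sum_{q}\vb y^{\sigma(q)}\bar\lambda_q$ and the aggregation $\bar\lambda_q=\sum_{p\in\phi(q)}\theta_p$ of a flow decomposition. You check the cut on each witness through $\vb T\ybar=\xbar=\sum_q\vb x^q\bar\lambda_q$.

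The paper's route is shorter. It also names the common projection explicitly as $\{\vb x\in\R^n_+ : \vb A\vb x\le\vb b,\ \boldsymbol\alpha\vb x\le\alpha_0\}\cap\conv(X)$, from which equality of the bounds is immediate.

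Your route has three advantages:
\begin{itemize}
\item It is constructive: it shows how to carry an actual master solution across the two reformulations while keeping the cut satisfied.
\item It needs only Definition~\ref{d_arcflowRefor}(ii) plus flow decomposition on the DAG, not property (i).
\item It is the pointwise template the paper itself must use in Proposition~\ref{p_cutsAF}, where the cut depends on $\vb y$ and the argument through $\conv(X)$ is no longer available.
\end{itemize}
Both proofs rest on the fact you isolate: the cut is a function of $\vb x$ alone, so it is evaluated at the same point on both sides.
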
 \begin{proof}
      Using~\eqref{eq_DWlink} and Definition~\ref{d_arcflowRefor}(i),
        \begin{equation*}
        \begin{aligned}
          \proj_{\vb x}\big(\mathrm{DW}_{\LP}^{\text{cut}}\big)
          &= \Big\{\vb x \in \R^{n}_{+} : \vb A \vb x \leq \vb b,\; \boldsymbol{\alpha}\vb x \leq \alpha_{0} \Big\}
             \cap
             \Big\{ \vb x = \textstyle\sum_{q \in Q} \vb x^{q}\lambda_{q} : \vb 1^{\top}\boldsymbol{\lambda} = 1,\; \boldsymbol{\lambda} \ge 0 \Big\} \\
          &= \Big\{\vb x \in \R^{n}_{+} : \vb A \vb x \leq \vb b,\; \boldsymbol{\alpha}\vb x \leq \alpha_{0} \Big\}
             \cap \conv(X) \\
          &= \Big\{\vb x \in \R^{n}_{+} : \vb A \vb x \leq \vb b,\; \boldsymbol{\alpha}\vb x \leq \alpha_{0} \Big\}
             \cap
             \Big\{ \vb x = \vb T \vb y : \vb F \vb y = \vb f,\; \vb y \ge \vb 0 \Big\} \\
          &= \proj_{\vb x}\big(\mathrm{AF}_{\LP}^{\text{cut}}\big).
        \end{aligned} 
        \end{equation*}

        \journalvspace[-0.75cm]
    
\end{proof}

A second case arises when a valid inequality $(\boldsymbol{\delta}, \delta_{0}) \in \R^{n+1}$ for~\eqref{eq_IP} is appended to $\vb D \vb x \leq \vb d$. In this case, the subproblem is modified to $\widehat{X} := \{\vb x \in X \, :\, \boldsymbol{\delta} \vb x \leq \delta_{0}\}$. Consequently, the DW pricing problem~\eqref{eq_DW--SP} is now defined over $\widehat{X}$, but its objective remains unchanged. In DW--MP, the set $Q$ is redefined as $\widehat{Q}$, where $\widehat{X} = \{\vb x^{q}\}_{q \in \widehat{Q}}$. In dynamic cut generation, if a variable $q \in Q \setminus \widehat{Q}$ has been added to DW--RMP, it must be removed from $Q'$.

In AF, a new DAG $\widehat{\D} = (\widehat{\N}, \widehat{\E})$, satisfying Definition~\ref{d_arcflowRefor}(i)-(ii) for $\widehat{X}$ must be constructed. Consequently, the AF pricing problem~\eqref{eq_AF--SP} is defined over $\widehat{Y}$, where $\widehat{X}=\proj_{\vb x}(\widehat{Y})$. In AF--MP, the arc set $\E$ and node set $\N$ are redefined as $\widehat{\E}$ and $\widehat{\N}$, so generated arcs $e \in \E \setminus \widehat{\E}$ and nodes $u \in \N \setminus \widehat{\N}$ for AF--RMP must be removed from $\E'$ and $\N'$. 

\subsection{Cuts in the Dantzig-Wolfe Space}
\label{s_cutsLambda}

A pair $(\boldsymbol{\gamma}, \gamma_{0}) \in \R^{|Q|+1}$ is a valid inequality for~\eqref{eq_DW} if $\boldsymbol{\gamma}\boldsymbol{\lambda} \leq \gamma_{0}$ holds for all $\boldsymbol{\lambda}\in \{0,1\}^{|Q|}$ satisfying \eqref{eq_DW1}--\eqref{eq_DW2}. As previously discussed, every valid inequality $(\boldsymbol{\alpha}, \alpha_{0})\in \R^{n+1}$ in the original space induces a valid inequality in the DW space via $\gamma_{q}=\boldsymbol{\alpha}\vb x^{q}$ for all $q\in Q$ and $\gamma_{0}=\alpha_{0}$. In some cases, however, valid inequalities can also be generated directly in the extended DW space. Such cuts typically require imposing additional constraints in the subproblem that cannot be readily expressed as linear inequalities in the original space. Nevertheless, they have proven effective in certain problem classes \citep{desaulniers2011cutting}. We discuss these cuts in further detail below.

In a similar spirit to cut-generating functions \citep{balas1971intersection}, the cut coefficients are defined as $\gamma_{q}=g(\vb A \vb x^{q})$ for $q \in Q$, where $g$ is a nonlinear function, e.g., the floor operator in Chvátal-Gomory (CG) rank-1 cuts \citep{chvatal1973edmonds}. If $g$ admits a linearization in the original space by introducing additional variables and constraints, then there exists an augmented \eqref{eq_IP} formulation \citep{villeneuve2005compact}:
$
    \max\left\{\vb c \vb x \, : \, \vb A \vb x \leq \vb b, \, \gamma \leq \gamma_{0}, \, \vb D \vb x \leq \vb d, \, \gamma =g(\vb A \vb x), (\vb x, \gamma) \in \Z_{+}^{n+1}\right\}
$. In this lifted space, the variable $\gamma \in \Z_{+}$ captures the value of the nonlinear function $g(\vb A \vb x)$, and the cut is enforced linearly as $\gamma \leq \gamma_{0}$. 

Performing a DW reformulation while keeping $\gamma \leq \gamma_{0}$ at the master level and $\gamma = g(\vb A \vb x)$ at the subproblem level results in
$
\widehat{X} := \{(\vb x, \gamma) \in \Z^{n+1}_{+} \; : \; \vb D \vb x \leq \vb d,\ \gamma = g(\vb A \vb x)\}
$. Reformulating $\widehat{X}=\{(\vb x^{q}, \gamma_{q})\}_{q \in \widehat{Q}}$ as in \eqref{eq_DWlink} gives $\vb x = \sum_{q \in \widehat{Q}} \vb x^{q} \lambda_{q}$ and $\gamma = \sum_{q \in \widehat{Q}} \gamma_{q} \lambda_{q}$. Thus, DW--MP is exactly \eqref{eq_DW} defined over $\widehat{Q}$, with the additional inequality $\sum_{q \in \widehat{Q}} \gamma_{q} \lambda_{q} \leq \gamma_{0}$. The DW pricing problem becomes $
    \overline{c}^{\star} := -\pi_{0} + \max_{(\vb x^{q}, \gamma_{q}) \in \widehat{X}} \big\{ (\vb c \vb x^{q}) - \boldsymbol{\pi} (\vb A \vb x^{q}) - \mu \gamma_{q} \big\}
$. Introducing the variable $\gamma$ and the nonlinear function $g$ increases the complexity of the subproblem. 

To translate the cut $(\boldsymbol{\gamma}, \gamma_{0})$ into~\eqref{eq_AF}, note that $\widehat{X}$ is a finite integer set and therefore a DAG $\widehat{\D}=(\widehat{\N}, \widehat{\E})$ satisfying Definition~\ref{d_arcflowRefor}(i)-(ii) for $\widehat{X}$ exists. Associated with $\widehat{\D}$, we have a projection matrix 
$
\widehat{\vb T} :=
[
    \widehat{\vb T}_{\vb x} \, ; \,
    \boldsymbol{\beta}
]
\in \{0,1\}^{(n+1)\times|\widehat{\E}|}$
such that
$\vb x=\widehat{\vb T}_{\vb x} \vb y$ and $\gamma =\boldsymbol{\beta} \vb y$. As a result, in AF--MP, we have the following constraints: $(\vb A \widehat{\vb T}_{\vb x}) \vb y \leq \vb b$ and $ \boldsymbol{\beta} \vb y \leq \gamma_{0}$, the latter being the new cut. Accordingly, the AF pricing problem becomes $
    \overline{c}^{\star}:=-\rho_{r} + \max_{\vb y^{p} \in \widehat{Y}}\big\{(\vb c \widehat{\vb T}_{\vb x}) \vb y^{p} - \boldsymbol{\pi}(\vb A \widehat{\vb T}_{\vb x}) \vb y^{p} - \mu ( \boldsymbol{\beta} \vb y^{p})\big\},
$
 where $\{\vb y^{p}\}_{p \in \widehat{P}}$ is the enumerated set of solutions of $\widehat{Y}$. Computationally, this cut can affect AF more severely than DW at the master level, as encoding $\gamma = g(\vb A \vb x)$ may require a substantial number of additional arcs and nodes in the DAG, enlarging AF--MP and reducing recombination. Still, both reformulations yield the same bound.

 \begin{proposition} \label{p_projectionStrengthLambda}
    Let $\mathrm{DW}_{\LP}^{\text{cut}}:=\mathrm{DW}_{\LP} \, \cap \, \big\{\boldsymbol{\lambda} \geq \vb 0 \, : \sum_{q \in \widehat{Q}} \gamma_{q} \lambda_{q} \leq \gamma_{0}\big\}$ and $\mathrm{AF}_{\LP}^{\text{cut}}:=\mathrm{AF}_{\LP} \, \cap \, \big\{\vb y \geq \vb 0 \, : \boldsymbol{\beta} \vb y \leq \gamma_{0} \big\}$. Then, $\proj_{(\vb x, \gamma)}\big(\mathrm{DW}_{\LP}^{\text{cut}}\big)=\proj_{(\vb x, \gamma)}\big(\mathrm{AF}_{\LP}^{\text{cut}}\big)$.
 \end{proposition}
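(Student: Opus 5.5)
The plan is to mirror the proof of Proposition~\ref{p_projectionStrength}, now working in the augmented space of $(\vb x,\gamma)$ and with $\widehat{X}$ in place of $X$. Throughout, $\mathrm{DW}_{\LP}$ is understood over $\widehat{Q}$, with $\gamma := \sum_{q\in\widehat{Q}}\gamma_q\lambda_q$. Likewise, $\mathrm{AF}_{\LP}$ is understood over $\widehat{\D}$, with $\vb x=\widehat{\vb T}_{\vb x}\vb y$ and $\gamma=\boldsymbol{\beta}\vb y$. The aim is to show that both projections equal
\[
\big\{(\vb x,\gamma)\in\R^{n+1}_{+} : \vb A\vb x\le \vb b,\ \gamma\le\gamma_0\big\}\cap\conv(\widehat{X}).
\]

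\textbf{DW side.} By the reformulation of $\widehat{X}=\{(\vb x^q,\gamma_q)\}_{q\in\widehat{Q}}$ as in~\eqref{eq_DWlink}, the set
\[
\Big\{(\vb x,\gamma)=\textstyle\sum_{q\in\widehat{Q}}(\vb x^q,\gamma_q)\lambda_q \,:\, \vb 1^\top\boldsymbol\lambda=1,\ \boldsymbol\lambda\ge\vb 0\Big\}
\]
is exactly $\conv(\widehat{X})$. The master constraint $\sum_{q}(\vb A\vb x^q)\lambda_q\le\vb b$ is just $\vb A\vb x\le\vb b$ evaluated at this $\vb x$. The cut $\sum_q\gamma_q\lambda_q\le\gamma_0$ is just $\gamma\le\gamma_0$ evaluated at this $\gamma$. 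Since every side constraint depends on $\boldsymbol\lambda$ only through $(\vb x,\gamma)$, the projection separates as a plain intersection with $\conv(\widehat{X})$.

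\textbf{AF side.} Apply Definition~\ref{d_arcflowRefor}(i) to $\widehat{\D}$ with the full projection matrix $\widehat{\vb T}=[\widehat{\vb T}_{\vb x};\boldsymbol\beta]$. This gives
\[
\conv(\widehat{X})=\big\{(\widehat{\vb T}_{\vb x}\vb y,\boldsymbol\beta\vb y): \vb F\vb y=\vb f,\ \vb y\ge\vb 0\big\}.
\]
Here too, the constraints $(\vb A\widehat{\vb T}_{\vb x})\vb y\le\vb b$ and $\boldsymbol\beta\vb y\le\gamma_0$ depend on $\vb y$ only through $(\vb x,\gamma)$. Hence the projection is the same intersection, and chaining the equalities as in the proof of Proposition~\ref{p_projectionStrength} gives the result. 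As an alternative cross-check, one can adapt Proposition~\ref{p_solutions} to $\widehat{X}$. Lifting via a selection $\sigma$ and decomposing flows preserves both $\widehat{\vb T}_{\vb x}\vb y$ and $\boldsymbol\beta\vb y$, because each path $\vb y^p$ satisfies $\widehat{\vb T}\vb y^p=(\vb x^{\varphi(p)},\gamma_{\varphi(p)})$.

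\textbf{Main obstacle.} The delicate step is the bookkeeping of the extra coordinate $\gamma$. The statement writes $\mathrm{DW}_{\LP}$ and $\mathrm{AF}_{\LP}$, but these must be read over $\widehat{Q}$ and $\widehat{\D}$, with $\gamma$ defined as a linear image of the extended variables. I must make explicit that Definition~\ref{d_arcflowRefor}(i) is invoked for the $(n+1)$-dimensional set $\widehat{X}$, so that it controls the joint projection onto $(\vb x,\gamma)$ and not merely onto $\vb x$. Once this is stated, the rest is the same separability argument as before.
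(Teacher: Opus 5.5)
Your proposal is correct and follows the paper's proof. Both identify the two projections with $\{(\vb x,\gamma)\in\R^{n+1}_{+} : \vb A\vb x\le\vb b,\ \gamma\le\gamma_0\}\cap\conv(\widehat{X})$, using the DW link for $\widehat{X}$ on one side and Definition~\ref{d_arcflowRefor}(i) for $\widehat{\D}$ with $\widehat{\vb T}=[\widehat{\vb T}_{\vb x};\boldsymbol\beta]$ on the other, and your explicit note that the definition must control the joint $(\vb x,\gamma)$ projection makes precise what the paper leaves implicit.
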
 \begin{proof}
    From the previous discussion,
    {\small
    \begin{equation*}
    \begin{aligned}
   \proj_{(\vb x, \gamma)}\big(\mathrm{DW}_{\LP}^{\text{cut}}\big)
    &=
       \Big\{(\vb x,\gamma)\in\mathbb R^{n+1}_{+} \, : \, \vb A\vb x \leq \vb b,\, \gamma \leq \gamma_{0}\Big\} \, \cap \, \Big \{ (\vb x,\gamma)=\textstyle\sum_{q\in\widehat{Q}} (\vb x^{q},\gamma_{q})\lambda_{q} \, : \,  \vb 1^{\top}\boldsymbol{\lambda}=1,\; \boldsymbol{\lambda} \geq \vb 0\Big\}  \\
    &= \Big\{(\vb x,\gamma)\in\mathbb R^{n+1}_{+} \, : \, \vb A\vb x\leq\vb b,\, \gamma \leq \gamma_{0}\Big\} \, \cap \, \conv(\widehat{X}) \\
    &= \Big\{(\vb x,\gamma)\in\mathbb R^{n+1}_{+} \, : \, \vb A\vb x\leq\vb b,\, \gamma \leq \gamma_{0}\Big\} \, \cap \, \Big\{(\vb x,\gamma) = \big[\widehat{\vb T}_{\vb x}\; \boldsymbol{\beta} \big]^{\!\top}\vb y \, : \,  \widehat{\vb F} \vb y= \widehat{\vb f}, \,  \vb y \geq \vb 0\Big\} \\
    &= \proj_{(\vb x, \gamma)}\big(\mathrm{AF}_{\LP}^{\text{cut}}\big). 
    \end{aligned}
    \end{equation*}
    }
    \journalvspace[-0.50cm]
\end{proof}

\subsection{Cuts in the Arc-Flow Space} \label{s_cutsY}

A pair $(\boldsymbol{\beta}, \beta_{0}) \in \R^{|\E|+1}$ is a valid inequality for~\eqref{eq_AF} if $\boldsymbol{\beta} \vb y \leq \beta_{0}$ holds for all $\vb y\in \{0,1\}^{|\E|}$ satisfying \eqref{eq_AF1}--\eqref{eq_AF3}. The cut is linear in the original space if there exists $\boldsymbol{\alpha} \in \R^{n}$ such that $\boldsymbol{\beta} = \boldsymbol{\alpha} \vb T$, i.e., $\boldsymbol{\beta}$ lies in the row space of $\vb T$. In that case, $\boldsymbol{\beta} \vb y = \boldsymbol{\alpha} \vb x \leq \beta_{0}$, recovering the case in~\S\ref{s_cutsX}. Otherwise, it cannot be represented as a single linear constraint in~\eqref{eq_IP}. Just as in the DW case, valid inequalities may also be generated directly in the extended AF space. 

To translate the cut $(\boldsymbol{\beta}, \beta_{0})$ into \eqref{eq_DW}, one might write $\sum_{q \in Q} \gamma_{q} \lambda_{q} \leq \beta_{0}$ with $\gamma_{q} := \boldsymbol{\beta}\vb y^{\phi(q)}$. This, however, raises two issues. First, if $\phi(q)$ contains multiple paths mapping to $\vb x^{q}$ and $\boldsymbol{\beta}\vb y^{p} \ne \boldsymbol{\beta}\vb y^{p'}$ for some $p, p' \in \phi(q)$, it is unclear which coefficient to assign to $\lambda_{q}$ (see Remark~\ref{r_symmetry}). Without these symmetry issues, the cut remains valid in DW and preserves its strength.

\begin{proposition} \label{p_cutsAF}
    Let $(\boldsymbol{\beta}, \beta_{0}) \in \R^{|\E|+1}$ be a valid inequality for~\eqref{eq_AF}. If $|Q|=|P|$, then $(\boldsymbol{\gamma}, \beta_{0})\in \R^{|Q|+1}$ with $\gamma_{q} := \boldsymbol{\beta}\vb y^{\phi(q)}$ is a valid inequality for~\eqref{eq_DW}. Moreover, $\proj_{\vb x}\big(\mathrm{DW}_{\LP}^{\text{cut}}\big)=\proj_{\vb x}\big(\mathrm{AF}_{\LP}^{\text{cut}}\big)$, where $\mathrm{DW}_{\LP}^{\text{cut}}:=\mathrm{DW}_{\LP} \, \cap \, \big\{\boldsymbol{\lambda} \geq \vb 0 \, : \sum_{q \in Q} \gamma_{q} \lambda_{q} \leq \beta_{0}\big\}$ and $\mathrm{AF}_{\LP}^{\text{cut}}:=\mathrm{AF}_{\LP} \, \cap \, \big\{\vb y \geq \vb 0 \, : \boldsymbol{\beta} \vb y \leq \beta_{0} \big\}$.
\end{proposition}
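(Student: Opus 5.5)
The plan is to use the hypothesis $|Q|=|P|$ to turn $\varphi$ from Proposition~\ref{p_functions} into a bijection, and then transfer everything along it. Since $\varphi:P\to Q$ is surjective between finite sets of equal cardinality, it is bijective. Hence $\phi(q)$ is a singleton $\{\sigma(q)\}$ and $\gamma_q=\boldsymbol{\beta}\vb y^{\sigma(q)}$ is well defined. The correspondence is then exact: integer DW solutions $\boldsymbol{\lambda}=\vb e_q$ are in bijection with paths $\vb y^{\sigma(q)}$, and $\gamma_q$ equals the AF cut coefficient evaluated on that path.

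\emph{Validity.} Take $\boldsymbol{\lambda}\in\{0,1\}^{|Q|}$ satisfying \eqref{eq_DW1}--\eqref{eq_DW2}. Then $\boldsymbol{\lambda}=\vb e_q$ for some $q$, and $\vb A\vb x^q\le\vb b$. Set $\vb y:=\vb y^{\sigma(q)}$.
\begin{itemize}
\item[(a)] The vector $\vb y$ is binary and satisfies $\vb F\vb y=\vb f$, by Definition~\ref{d_arcflowRefor}(ii).
\item[(b)] It satisfies $(\vb A\vb T)\vb y=\vb A\vb x^q\le\vb b$.
\end{itemize}
So $\vb y$ is feasible for \eqref{eq_AF1}--\eqref{eq_AF3}, and validity of $(\boldsymbol{\beta},\beta_0)$ gives $\boldsymbol{\gamma}\boldsymbol{\lambda}=\gamma_q=\boldsymbol{\beta}\vb y^{\sigma(q)}\le\beta_0$.

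\emph{Equality of projections.} I would redo the two constructions of Proposition~\ref{p_solutions} and check that each also preserves the cut.
\begin{itemize}
\item[(1)] DW to AF. Given $(\xbar,\lbar)\in\mathrm{DW}^{\text{cut}}_{\LP}$, set $\ybar=\sum_q\vb y^{\sigma(q)}\bar\lambda_q$. Proposition~\ref{p_solutions} already gives $(\xbar,\ybar)\in\mathrm{AF}_{\LP}$. For the cut, linearity gives $\boldsymbol{\beta}\ybar=\sum_q\gamma_q\bar\lambda_q\le\beta_0$.
\item[(2)] AF to DW. Given $(\xbar,\ybar)\in\mathrm{AF}^{\text{cut}}_{\LP}$, decompose $\ybar=\sum_p\vb y^p\theta_p$ by flow decomposition, as in Proposition~\ref{p_solutions}. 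Define $\bar\lambda_q=\sum_{p\in\phi(q)}\theta_p$. Because $\phi(q)=\{\sigma(q)\}$, this is simply $\bar\lambda_q=\theta_{\sigma(q)}$, and every $p$ equals $\sigma(\varphi(p))$. Therefore $\sum_q\gamma_q\bar\lambda_q=\sum_p(\boldsymbol{\beta}\vb y^p)\theta_p=\boldsymbol{\beta}\ybar\le\beta_0$.
\end{itemize}
Proposition~\ref{p_solutions} supplies the remaining DW/AF constraints in both directions, so the two sets have the same $\vb x$-projection.

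The main obstacle is step (2), and it is exactly where the hypothesis is needed. Without injectivity, several paths with different $\boldsymbol{\beta}$-values would be aggregated into a single $\lambda_q$, and the DW cut value would no longer match $\boldsymbol{\beta}\ybar$. With the bijection, the identity $\boldsymbol{\beta}\vb y^{p}=\gamma_{\varphi(p)}$ for every $p\in P$ makes the computation go through.
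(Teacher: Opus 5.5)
Your proposal is correct and follows essentially the same route as the paper: validity by lifting the unique selected column $\vb x^{\bar q}$ to its path, and equality of projections via the same two constructions. These are the convex combination $\sum_q \vb y^{\phi(q)}\bar\lambda_q$ in one direction, and flow decomposition with $\bar\lambda_q=\theta_{\phi(q)}$ in the other. Your remark that a surjection between finite sets of equal size is a bijection, and your appeal to Proposition~\ref{p_solutions} for the non-cut constraints, only make explicit what the paper leaves implicit or re-derives inline.
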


\begin{proof}
     Take any $(\vb x, \boldsymbol{\lambda})$ feasible for~\eqref{eq_DW}. By the assumption, there is a unique $\bar{q} \in Q$ with $\lambda_{\bar{q}} = 1$ and $\lambda_{q} = 0$ for $q \in Q \setminus\{\bar{q}\}$. Because $\varphi$ is bijective, $\vb x^{\bar{q}} = \vb T \vb y^{\phi(\bar{q})}$ for a single path $\vb y^{\phi(\bar{q})} \in Y$ and $(\vb A \vb T) \vb y^{\phi(\bar{q})} = \vb A \vb x^{\bar{q}} \leq \vb b$. Thus, $\vb y^{\phi(\bar{q})}$ is feasible for~\eqref{eq_AF} and $\sum_{q \in Q}\gamma_{q} \lambda_{q}  = \sum_{q \in Q}(\boldsymbol{\beta} \vb y^{\phi(q)}) \lambda_{q} = \boldsymbol{\beta}\vb y^{\phi(\bar{q})} \leq \beta_{0}$, so the cut is valid for all feasible $\boldsymbol{\lambda}$ in~\eqref{eq_DW}.  

    Next, we show that the cut preserves its strength. Take any $\xbar \in \proj_{\vb x}(\mathrm{DW}_{\LP}^{\text{cut}})$, so there exists $\lbar \geq \vb 0$ with $(\xbar, \lbar) \in\mathrm{DW}_{\LP}^{\text{cut}}$. Define $\ybar:= \sum_{q \in Q}\vb y^{\phi(q)} \bar{\lambda}_{q}$. Since $\vb 1^{\top} \boldsymbol{\bar{\lambda}} = 1$, we have $\xbar = \vb T \ybar, \, \vb F \ybar = \vb f$, and $(\vb A \vb T) \ybar = \vb A \vb \xbar \leq \vb b$. Thus, $(\xbar, \ybar) \in \mathrm{AF}_{\LP}$. Moreover, $\boldsymbol{\beta} \ybar =  \boldsymbol{\beta} \big(\sum_{q \in Q} \vb y^{\phi(q)} \bar{\lambda}_{q}\big) =  \sum_{q \in Q} (\boldsymbol{\beta} \vb y^{\phi(q)}) \bar{\lambda}_{q} = \sum_{q \in Q} \gamma_{q} \bar{\lambda}_{q} \leq \beta_{0}$. Hence, $(\xbar, \ybar) \in \mathrm{AF}_{\LP}^{\text{cut}}$ and $\xbar \in \proj_{\vb x}(\mathrm{AF}_{\LP}^{\text{cut}})$. Conversely, take any $\xbar\in\proj_{\vb x}(\mathrm{AF}_{\LP}^{\text{cut}})$, so there exists $\ybar \geq \vb 0$ with $(\xbar,\ybar) \in\mathrm{AF}_{\LP}^{\text{cut}}$. By the flow decomposition theorem, $\ybar=\sum_{p\in P}\vb y^{p}\theta_{p}$ for some  $\mathbf 1^{\top}\boldsymbol{\theta}=1$ and $\boldsymbol{\theta} \geq \vb 0$. For every $q \in Q$, define $\bar\lambda_{q}:= \theta_{\phi(q)}$, which is possible because $\varphi$ is bijective. Then, $\mathbf 1^{\top}\lbar=1$ and $\lbar\ge\mathbf 0$. Moreover,  $\xbar =\vb T\ybar = \vb T \big( \sum_{p \in P}\vb y^{p}\theta_{p} \big) = \sum_{p \in P} (\vb T \vb y^{p}) \theta_{p} =\sum_{p \in P}\vb x^{\varphi(p)}\theta_{p} =\sum_{q\in Q}\vb x^{q} \bar\lambda_{q}$ and $\sum_{q\in Q}(\vb A\vb x^{q})\bar\lambda_{q} =\vb A\xbar \leq \vb b$. Thus, $(\xbar,\lbar)\in\mathrm{DW}_{\LP}$. Finally, $\sum_{q\in Q}\gamma_{q}\bar\lambda_{q} = \sum_{q\in Q}\bigl(\boldsymbol{\beta}\vb y^{\phi(q)}\bigr)\bar\lambda_{q} =\sum_{p\in P}\boldsymbol{\beta} (\vb y^{p}\theta_{p}) =\boldsymbol{\beta}\ybar \leq  \beta_{0}$, because $(\xbar,\ybar)\in\mathrm{AF}_{\LP}^{\text{cut}}$. Altogether, $(\xbar,\lbar)\in\mathrm{DW}_{\LP}^{\text{cut}}$. 
\end{proof}

The result also holds under milder conditions, namely when $|Q| \le |P|$ and $\boldsymbol{\beta}\vb y^{p} = \boldsymbol{\beta}\vb y^{p'}$ for all $p, p' \in \phi(q)$ and every $q \in Q$. Otherwise, the cut cannot be represented in DW by a single linear inequality, since AF can distinguish between $p$ and $p'$, whereas DW cannot, as both $\vb y^{p}$ and $\vb y^{p'}$ project onto the same point $\vb x^{q} \in X$. The second issue occurs in the DW pricing problem. The term $\mu\gamma_{q}$ must be subtracted from the objective, where $\gamma_{q} = \boldsymbol{\beta}\vb y^{\phi(q)}$. As noted earlier, $\boldsymbol{\beta}\vb y^{\phi(q)}$ need not be linear in the original space, in which case it must be enforced by modifying the subproblem structure as in \S\ref{s_cutsLambda}. In practice, these two issues can be resolved by defining the DW reformulation over $Y$ rather than $X$. In this case, $\vb x = \sum_{p \in P} \vb x^{p}\lambda_{p}$ with $\sum_{p \in P} \lambda_{p} = 1$, $\boldsymbol{\lambda} \ge 0$, and $\vb x^{p} = \vb T \vb y^{p}$ for all $p \in P$. This reformulation performs pricing in the AF space while preserving the aggregated DW column representation at the master level.

\section{Iterative Subproblem Strengthening} \label{s_subproblemRelaxation}

The reformulation of the original problem is computationally motivated by the special structure of the subproblem $X$, which is assumed to make its solution more tractable than the original problem. In some cases, however, even this structured subproblem may still not be sufficiently tractable for practical instances. Like in a cutting-plane algorithm, the subproblem can then be relaxed and iteratively strengthened as needed. This idea underlies well-established techniques such as decremental state-space relaxation \citep{boland2006accelerated}, dynamic discretization discovery \citep{boland2017continuous}, and column elimination \citep{van2022graph}, which we formalize and reinterpret through the lens of cutting planes.  

Recall the definition of the subproblem in the original space as 
$X:=\{\vb x\in \Z_{+}^{n} \, : \, \vb D \vb x \leq \vb d\}$. We define a {\em subproblem relaxation} as any superset $\widehat{X} \supseteq X$.  We denote the index set of the solutions of $\widehat{X}$ by $\widehat{Q}$, which parametrizes the DW reformulation~\eqref{eq_DW} over this relaxed subproblem. Likewise, we use Definition~\ref{d_arcflowRefor} to define an associated AF reformulation~\eqref{eq_AF} over a DAG $\widehat{\D}$ with projection matrix $\widehat{\vb T}$ (both exist by definition). Proposition~\ref{p_bound} yields the following corollary. 

\begin{corollary}
Under the same subproblem relaxation, DW and AF yield the same LP bound.
\end{corollary}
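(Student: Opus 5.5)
The plan is to apply Proposition~\ref{p_bound} verbatim, with $X$ replaced by the relaxed subproblem $\widehat{X}$. The only thing to check is that $\widehat{X}$ satisfies every hypothesis that proof used.

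First, I would fix the setting. Take $\widehat{X}\supseteq X$ to be a finite (bounded) set of nonnegative integer points, so that it admits the enumeration $\widehat{X}=\{\vb x^{q}\}_{q\in\widehat{Q}}$ as in \eqref{eq_DWlink}. Take $(\widehat{\D},\widehat{\vb T})$ to be an arc-flow representation of $\widehat{X}$ in the sense of Definition~\ref{d_arcflowRefor}. Boundedness is what guarantees both the finite enumeration and the existence of the DAG, so it is the one point to confirm. It is also implicit in the definition of a subproblem relaxation, since the paper states that $\widehat{\D}$ and $\widehat{\vb T}$ ``exist by definition''. Feasibility is automatic, because $\widehat{X}\supseteq X\neq\emptyset$.

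Second, I would repeat the projection argument of Proposition~\ref{p_bound}. On the DW side, the LP relaxation over $\widehat{Q}$ projects onto $\{\vb x\in\R^n_+:\vb A\vb x\le\vb b\}\cap\conv(\widehat{X})$, since dropping integrality of $\boldsymbol\lambda$ in the convex-combination representation gives exactly $\conv(\widehat{X})$. On the AF side, Definition~\ref{d_arcflowRefor}(i) applied to $\widehat{X}$ gives $\proj_{\vb x}(\widehat F)=\conv(\widehat{X})$. The AF LP relaxation therefore projects onto the same set. Both relaxations maximize $\vb c\vb x$, which equals $\vb c\sum_q\vb x^q\lambda_q$ and $(\vb c\widehat{\vb T})\vb y$ respectively. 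Optimizing the same linear objective over the same convex set gives the same value, namely $\widehat z_D:=\max\{\vb c\vb x:\vb A\vb x\le\vb b,\ \vb x\in\conv(\widehat{X})\}$.

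Alternatively, one can use Proposition~\ref{p_solutions} with $X$ replaced by $\widehat{X}$. It maps feasible points of each LP to feasible points of the other with the same $\vb x$, which gives equal optimal values directly.

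There is no genuine obstacle here. The only subtlety is noting that none of the earlier arguments used $\widehat{X}=\{\vb x:\vb D\vb x\le\vb d\}$. They only used finiteness and the arc-flow representation property, so the result transfers unchanged. As a remark, I would add that $\widehat z_D\ge z_D$ because $\conv(\widehat{X})\supseteq\conv(X)$; this explains why iterative strengthening is needed, though it is not part of the corollary.
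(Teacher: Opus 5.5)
Your proposal is correct and follows the paper's route: the paper obtains the corollary directly from Proposition~\ref{p_bound}, applied to $\widehat{X}$ with its arc-flow representation $(\widehat{\D},\widehat{\vb T})$. Your check that the equality part uses only finiteness and Definition~\ref{d_arcflowRefor} is what makes this transfer legitimate. Only the trailing inequality $z_D\le z^{LP}$ in Proposition~\ref{p_bound} relies on $\vb D\vb x\le\vb d$, and the corollary does not need it, so your claim that none of the earlier arguments used the form of $X$ is slightly overstated but harmless.
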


Given an initial relaxation $\widehat{X}^{(0)} \supseteq X$, an iterative subproblem strengthening method proceeds as follows. At iteration $t$, we solve~\eqref{eq_DW} or~\eqref{eq_AF} with the current relaxation $\widehat{X}^{(t)}$, obtaining the bound $\max\{\vb c \vb x\,:\, \vb A \vb x \leq \vb b, \vb x \in \conv(\widehat{X}^{(t)})\} \geq z_{D}$. One can verify whether the solution lies in $\conv(X)$ by checking whether every element in the support of the decomposition (i.e., columns in DW and paths in AF) corresponds to a point in $X$. If so, the decomposition bound $z_{D}$ has been attained. Otherwise, as described in Definition~\ref{d_strenghtening}, as stronger relaxation $\widehat{X}^{(t+1)}$ is obtained from $\widehat{X}^{(t)}$ by removing infeasible points with respect to $X$, so that $\widehat{X}^{(t)} \supset \widehat{X}^{(t+1)} \supseteq X$. Repeating this process results in an iterative strengthening method, formalized in Definition~\ref{d_iterative}. In practice, the procedure may be terminated early (e.g., after a fixed number of iterations or upon reaching a target bound), and the pricing solution may guide the strengthening step.

\begin{definition} \label{d_strenghtening}
    Given a subproblem relaxation $\widehat{X}^{(t)}$, a \emph{strengthening} defines $\widehat{X}^{(t+1)} = \widehat{X}^{(t)} \setminus \widetilde{X}^{(t)}$, where $\emptyset \ne \widetilde{X}^{(t)} \subset \widehat{X}^{(t)} \setminus X$.
\end{definition}

\begin{definition} \label{d_iterative}
Given an initial subproblem relaxation $\widehat{X}^{(0)} \supset X$, an \emph{iterative strengthening method} defines a finite sequence of relaxations $\widehat{X}^{(0)} \supset \widehat{X}^{(1)} \supset \cdots \supset \widehat{X}^{(T)} \supseteq X$ via strengthenings. 
\end{definition}

Observe that, by definition, each strengthening removes at least one infeasible point from the subproblem relaxation.  As a consequence, for bounded and feasible integer programs, iterative strengthening will converge to the decomposition bound in a finite number of iterations.

Iterative strengthening methods can be classified by their strengthening step being {\em global} or {\em local}. Global methods, common in DW-based approaches, modify the transition function of the underlying dynamic program. An example is decremental state-space relaxation \citep{boland2006accelerated, righini2008new}, which reintroduces relaxed constraints by expanding the state space. Local methods, more common in AF-based approaches, refine an explicit DAG by removing infeasible paths. For example, column elimination \citep{van2022graph, karahalios2025column} detects infeasible paths in the flow decomposition of the solution and removes them along with all paths sharing the same infeasible subpath. Proposition~\ref{l_strengtheningCuts} characterizes the bound improvement achieved by the strengthening step from a cutting-plane perspective.

\begin{proposition} \label{l_strengtheningCuts}
    The bound improvement of a strengthening step can be equivalently obtained by introducing a finite number of subproblem-level cuts. 
\end{proposition}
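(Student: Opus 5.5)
The plan is to construct, for the strengthening $\widehat{X}^{(t+1)} = \widehat{X}^{(t)} \setminus \widetilde{X}^{(t)}$ of Definition~\ref{d_strenghtening}, one subproblem-level cut for each removed point, in the spirit of the second case of \S\ref{s_cutsX}. Because $\widehat{X}^{(t)}$ is a bounded set of integer points (it is a finite relaxation, as assumed for the iterative method), $\widetilde{X}^{(t)}$ is finite. It therefore suffices to show two things. First, each single point $\tilde{\vb x} \in \widetilde{X}^{(t)}$ can be removed from $\widehat{X}^{(t)}$ by finitely many cuts that are valid for $X$. Second, after all of them are appended to the subproblem, the resulting set is exactly $\widehat{X}^{(t+1)}$. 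The bound equality then follows immediately. The DW and AF bounds are $\max\{\vb c\vb x : \vb A\vb x\le\vb b,\ \vb x\in\conv(\cdot)\}$ over the modified subproblem, by Proposition~\ref{p_bound} and its corollary for relaxed subproblems. Two subproblems with the same feasible point set give the same convex hull and hence the same bound.

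For the cuts themselves, I would take the extended-space view of \S\ref{s_cutsLambda}, which is the cleanest route. Define $\gamma_q := 1$ if $\vb x^{q} = \tilde{\vb x}$ and $\gamma_q := 0$ otherwise, and impose $\gamma \le 0$. This cut is valid because $\tilde{\vb x} \notin X$, and appending it at the subproblem level removes exactly $\tilde{\vb x}$. In AF, the same cut is obtained by rebuilding the DAG $\widehat{\D}$ for $\widehat{X}^{(t)}\setminus\{\tilde{\vb x}\}$, which is possible by Definition~\ref{d_arcflowRefor}. I would also give a linear-in-$\vb x$ alternative using a no-good cut. Since $\widehat{X}^{(t)}$ is bounded, there is a $U$ with $\vb 0 \le \vb x \le U\vb 1$ on $\widehat{X}^{(t)}$. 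After binary expansion of each coordinate (or, if $\widehat{X}^{(t)}\subseteq\{0,1\}^n$, directly), the inequality $\sum_{j:\tilde x_j=1}(1-x_j)+\sum_{j:\tilde x_j=0}x_j \ge 1$ cuts off $\tilde{\vb x}$ and no other integer point. Either way, each point costs one cut, so $|\widetilde{X}^{(t)}|$ cuts suffice.

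The main obstacle is the general-integer case when only original-space linear cuts are allowed. A point in the interior of the integer hull of $\widehat{X}^{(t)}$ cannot be separated from its neighbours by a single linear inequality. My first choice is to avoid this entirely by adopting the nonlinear cut-generating form of \S\ref{s_cutsLambda}, with $g$ the indicator of $\tilde{\vb x}$, which that section already allows. As a fallback, I would use the binary-expansion lifting with auxiliary variables described above. I would close by noting that the local strengthening used in column elimination, which removes all paths sharing an infeasible subpath, fits the same template. In that case, the removed set is the union of the points $\vb T\vb y^{p}$ over those paths. This set is still finite, and it can be handled by a single AF-space cut $\sum_{e\in S} y_e \le |S|-1$ on the subpath $S$, which is valid by the arguments of \S\ref{s_cutsY}.
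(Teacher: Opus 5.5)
Your first paragraph states the right criterion: the DW and AF bounds depend on the subproblem only through its convex hull. You then aim instead at reproducing the point set $\widehat{X}^{(t+1)}$ exactly, and this is where the proposal goes off track.

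\textbf{What counts as a cut.} In the paper, a subproblem-level cut is a linear inequality $(\boldsymbol{\delta},\delta_0)\in\R^{n+1}$ in the original space, appended to $\vb D\vb x\le\vb d$. This is the second case of \S\ref{s_cutsX}, and it is also what the paper's proof produces. Your first-choice cut, $[\vb x=\tilde{\vb x}]\le 0$ imposed in the subproblem, is not of this form. It is just the removal of $\tilde{\vb x}$ rewritten as a constraint. With it the proposition becomes a tautology: the single ``cut'' $\mathbf{1}_{\widetilde X^{(t)}}(\vb x)\le 0$ would already suffice, and none of the polyhedral consequences the paper draws would follow. Also, in \S\ref{s_cutsLambda} the function $g$ acts on $\vb A\vb x$, and the cut $\gamma\le\gamma_0$ is kept in the master, not the subproblem.

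\textbf{Where your linear route stops.} Your no-good inequality is a genuine original-space linear cut only when $\widehat X^{(t)}\subseteq\{0,1\}^n$. In that case your argument is correct and gives an explicit alternative to the paper's, at the cost of one weak cut per removed point. Binary expansion, by contrast, moves you to a lifted space. So for general integer subproblems, which is the paper's setting, the case you call the main obstacle is left open.

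\textbf{The missing observation.} That obstacle is harmless. A removed point lying in $\conv(\widehat X^{(t+1)})$, such as $1$ when $\{0,1,2\}$ is strengthened to $\{0,2\}$, cannot be cut off linearly, but removing it does not change the bound either. Only removed points outside $\conv(\widehat X^{(t+1)})$ matter, and those are separable. Concretely:
\begin{itemize}
\item Take a finite inequality description $\{\boldsymbol{\delta}^k\vb x\le\delta_0^k\}_{k\in K}$ of the polytope $\conv(\widehat X^{(t+1)})$. Each inequality is valid for $\widehat X^{(t+1)}\supseteq X$, so each is a legitimate subproblem-level cut.
\item The cut set $\{\vb x\in\widehat X^{(t)}:\boldsymbol{\delta}^k\vb x\le\delta_0^k\ \forall k\}$ contains $\widehat X^{(t+1)}$ and is contained in $\conv(\widehat X^{(t+1)})$.
\item Hence the convex hulls coincide, and by Proposition~\ref{p_bound} so do the DW and AF bounds.
\end{itemize}
This is the paper's separation argument. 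The paper phrases it as cutting off the finitely many extreme points of $\conv(\widehat X^{(t)})$ that lie outside $\conv(\widehat X^{(t+1)})$. Drawing the separating inequalities from a description of $\conv(\widehat X^{(t+1)})$ is what makes the final equality of hulls immediate. An arbitrary separating inequality can expose new bad extreme points: for example, $x\le 2.5$ when $\{0,1,2,3\}$ is strengthened to $\{0,1\}$ leaves the point $2$.

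\textbf{The column-elimination remark.} Your closing claim is wrong as a bound statement. The cut $\sum_{e\in S}y_e\le|S|-1$ is valid for integer flows, but it is weak in the LP. Take $S=(e_1,e_2)$ meeting at node $v$. The LP still admits half a unit of flow on a path through $S$ and half on a path avoiding $v$. That point lies outside the projection of the refined DAG's flow polytope, which requires the stronger $y_{e_1}+y_{e_2}\le\sum_{e\in\delta^{+}(v)}y_e$. So the two LP bounds can differ.
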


\begin{proof}
Let $\widehat{X}^{(t)}$ and $\widehat{X}^{(t+1)} \subset \widehat{X}^{(t)}$ be consecutive subproblem relaxations. After the strengthening step, the subproblem bound is that of $\conv(\widehat{X}^{(t+1)})$. 
By the separation theorem, for every extreme point $\bar{\vb x} \in \operatorname{ext}\!\big(\conv(\widehat{X}^{(t)}) \big) \setminus \conv(\widehat{X}^{(t+1)})$, there exists an inequality $(\boldsymbol{\delta}, \delta_{0}) \in \R^{n+1}$ valid for $\conv(\widehat{X}^{(t+1)})$ that is violated by $\bar{\vb x}$. Because $\conv(\widehat{X}^{(t)})$ is a polytope with finitely many extreme points, a finite set of inequalities $\{(\boldsymbol{\delta}^{k}, \delta_{0}^{k})\}_{k \in K}$ suffices. Imposing these inequalities at the subproblem level yields $\conv(\widehat{X}^{(t+1)})=\conv\big(\big\{\vb x \in \widehat{X}^{(t)}\,:\, \boldsymbol{\delta}^{k}\vb x \leq \delta_{0}^{k}\,\, \forall k\big\}\big)$, because $\conv(\widehat{X}^{(t+1)}) \subseteq \conv(\widehat{X}^{(t)})$ and the inequalities cut off the extreme points of $\conv(\widehat{X}^{(t)})$ that lie outside $\conv(\widehat{X}^{(t+1)})$. 
\end{proof}

An illustration of strengthening by cutting planes is given in Figure~\ref{f_subproblemCut}. Proposition~\ref{l_strengtheningCuts} provides a polyhedral characterization of iterative strengthening methods with several practical implications. First, the effect of a strengthening step on DW and AF mirrors that of subproblem-level cuts discussed in~\S\ref{s_cutsX}. Second, global strengthenings are stronger--since they implicitly introduce more cuts--but also more computationally demanding, as the DAG size grows exponentially with the number of constraints~\citep{vanderbeck2000dantzig}. Finally, this polyhedral view offers an alternative proof of convergence for these techniques.

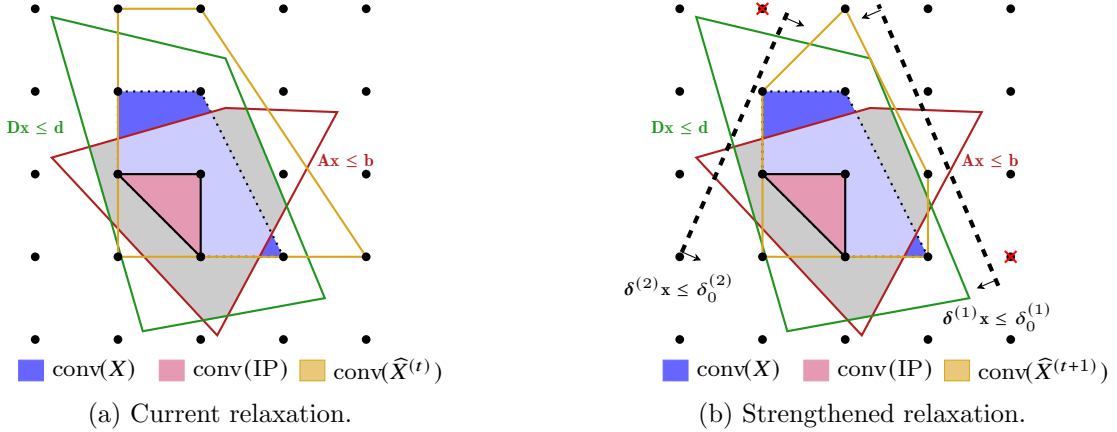
\begin{figure}[t]
  \centering
    \begin{subfigure}[t]{0.475\textwidth}
    \centering
    \begin{tikzpicture}[scale=1.1, transform shape]

    \begin{scope}
      \clip (2.2,-0.95) -- (3.65,1.75) -- (2.3,1.8) -- (0.2,1.2) -- cycle;
      \clip (0.2,2.9) -- (2.3,2.4) -- (3.5,-0.5) -- (1.3,-0.9) -- cycle;
      \fill[gray!40] (-1,-1) rectangle (5,4);
    \end{scope}

    \fill[blue!60]
      (1,2) -- (1,1) -- (2,0) -- (3,0) -- (2,2) -- cycle;

    \begin{scope}
      \clip (1,2) -- (1,1) -- (2,0) -- (3,0) -- (2,2) -- cycle;
      \clip (2.2,-0.95) -- (3.65,1.75) -- (2.3,1.8) -- (0.2,1.2) -- cycle; 
      \clip (0.2,2.9) -- (2.3,2.4) -- (3.5,-0.5) -- (1.3,-0.9) -- cycle;  
      \fill[blue!20] (-1,-1) rectangle (5,4);
    \end{scope}

    \draw[dotted, thick]
      (1,2) -- (1,1) -- (2,0) -- (3,0) -- (2,2) -- cycle;

    \fill[purple!40]  (2,0) -- (2,1) -- (1,1) -- cycle;
    \draw[black, thick] (2,0) -- (2,1) -- (1,1) -- cycle;
    \foreach \pt in {(2,0),(2,1),(1,1)} \fill[black] \pt circle (1.5pt);

    \draw[darkred, thick]
      (2.2,-0.95) -- (3.65,1.75) -- (2.3,1.8) -- (0.2,1.2) -- cycle;
    \draw[darkgreen, thick]
      (0.2,2.9) -- (2.3,2.4) -- (3.5,-0.5) -- (1.3,-0.9) -- cycle;

    \draw[amber, thick]
      (1,0) -- (1,3) -- (2,3) -- (4,0) -- cycle;

    \node[darkgreen, above] at (0,1.35) {\tiny $\vb D\vb x \leq \vb d$};
    \node[darkred,   above] at (3.75,0.95) {\tiny $\vb A\vb x \leq \vb b$};

    \foreach \x in {0,...,4}
      \foreach \y in {-1,...,3}
        \fill[black] (\x,\y) circle (1.5pt);

    \begin{scope}[shift={(-0.2,-1.5)}]
      \def\w{0.30} \def\h{0.25} \def\dx{1.4} \def\y{0.00}
      \draw[blue!60, fill=blue!60] (0,\y) rectangle ++(\w,\h);
      \node[anchor=west, inner sep=0pt] at (\w+0.1,\y+0.5*\h)
        {\scriptsize $\conv(X)$};
      \begin{scope}[shift={({\w+\dx},0)}]
        \draw[purple!40, fill=purple!40] (0,\y) rectangle ++(\w,\h);
        \node[anchor=west, inner sep=0pt] at (\w+0.1,\y+0.5*\h)
          {\scriptsize $\conv\left(\text{IP}\right)$};
      \end{scope}
      \begin{scope}[shift={({2*(\w+\dx)},0)}]
        \draw[amber, fill=amber!60] (0,\y) rectangle ++(\w,\h);
        \node[anchor=west, inner sep=0pt] at (\w+0.1,\y+0.5*\h)
          {\scriptsize $\conv(\widehat{X}^{(t)})$};
      \end{scope}
    \end{scope}
  \end{tikzpicture}%
  \caption{Current relaxation.}
\end{subfigure} \hfill
\begin{subfigure}[t]{0.475\textwidth}
  \centering
  \begin{tikzpicture}[scale=1.1, transform shape]

    \begin{scope}
      \clip (2.2,-0.95) -- (3.65,1.75) -- (2.3,1.8) -- (0.2,1.2) -- cycle;
      \clip (0.2,2.9) -- (2.3,2.4) -- (3.5,-0.5) -- (1.3,-0.9) -- cycle;
      \fill[gray!40] (-1,-1) rectangle (5,4);
    \end{scope}

    \fill[blue!60]
      (1,2) -- (1,1) -- (2,0) -- (3,0) -- (2,2) -- cycle;

    \begin{scope}
      \clip (1,2) -- (1,1) -- (2,0) -- (3,0) -- (2,2) -- cycle;
      \clip (2.2,-0.95) -- (3.65,1.75) -- (2.3,1.8) -- (0.2,1.2) -- cycle; 
      \clip (0.2,2.9) -- (2.3,2.4) -- (3.5,-0.5) -- (1.3,-0.9) -- cycle;  
      \fill[blue!20] (-1,-1) rectangle (5,4);
    \end{scope}

    \draw[dotted, thick]
      (1,2) -- (1,1) -- (2,0) -- (3,0) -- (2,2) -- cycle;

    \fill[purple!40]  (2,0) -- (2,1) -- (1,1) -- cycle;
    \draw[black, thick] (2,0) -- (2,1) -- (1,1) -- cycle;
    \foreach \pt in {(2,0),(2,1),(1,1)} \fill[black] \pt circle (1.5pt);

    \draw[darkred, thick]
      (2.2,-0.95) -- (3.65,1.75) -- (2.3,1.8) -- (0.2,1.2) -- cycle;

    \draw[darkgreen, thick]
      (0.2,2.9) -- (2.3,2.4) -- (3.5,-0.5) -- (1.3,-0.9) -- cycle;

    \draw[amber, thick] (1,0) -- (1,2) -- (2,3) -- (3,1) -- (3,0) -- cycle;

    \node[darkgreen, above] at (0,1.35) {\tiny $\vb D\vb x \leq \vb d$};
    \node[darkred,   above] at (3.75,0.95) {\tiny $\vb A\vb x \leq \vb b$};

    \foreach \x in {0,...,4}
      \foreach \y in {-1,...,3}
        \fill[black] (\x,\y) circle (1.5pt);

    \draw[red,thick] (1,3) +(-2pt,-2pt) -- +(2pt,2pt);
    \draw[red,thick] (1,3) +(-2pt,2pt) -- +(2pt,-2pt);
    
    \draw[red,thick] (4,0) +(-2pt,-2pt) -- +(2pt,2pt);
    \draw[red,thick] (4,0) +(-2pt,2pt) -- +(2pt,-2pt);
    
    \coordinate (L1a) at (3.85,-.35);
    \coordinate (L2a) at (2.40,3.05); 
    
    \draw[black, dashed, line width=1.6pt] (L1a) -- (L2a);
    
    \node[below=2pt of L1a, align=center] {\tiny $\boldsymbol{\delta}^{(1)}\vb x \leq \delta^{(1)}_{0}$};
    
    \def\arr{0.07} \def\ofs{0.02} \def\ah{3pt}
    \coordinate (B1a) at ($(L1a)!\ofs!(L2a)$);
    \coordinate (B2a) at ($(L2a)!\ofs!(L1a)$);
    \draw[-{Stealth[length=\ah,width=\ah]}] (B1a) -- ($(B1a)!\arr!+90:(L2a)$);
    \draw[-{Stealth[length=\ah,width=\ah]}] (B2a) -- ($(B2a)!\arr!-90:(L1a)$);

    \coordinate (L1b) at (1.30,2.95);
    \coordinate (L2b) at (0,0); 
    
    \draw[black, dashed, line width=1.6pt] (L1b) -- (L2b);
    \node[below=2pt of L2b, align=center] {\tiny $\boldsymbol{\delta}^{(2)}\vb x \leq \delta^{(2)}_{0}$};
    
    \def\arr{0.08} \def\ofs{0.02} \def\ah{3pt}
    \coordinate (B1b) at ($(L1b)!\ofs!(L2b)$);
    \coordinate (B2b) at ($(L2b)!\ofs!(L1b)$);
    \draw[-{Stealth[length=\ah,width=\ah]}] (B1b) -- ($(B1b)!\arr!+90:(L2b)$);
    \draw[-{Stealth[length=\ah,width=\ah]}] (B2b) -- ($(B2b)!\arr!-90:(L1b)$);
    
    \begin{scope}[shift={(-0.2,-1.5)}]
      \def\w{0.30} \def\h{0.25} \def\dx{1.4} \def\y{0.00}
      \draw[blue!60, fill=blue!60] (0,\y) rectangle ++(\w,\h);
      \node[anchor=west, inner sep=0pt] at (\w+0.1,\y+0.5*\h)
        {\scriptsize $\conv(X)$};
      \begin{scope}[shift={({\w+\dx},0)}]
        \draw[purple!40, fill=purple!40] (0,\y) rectangle ++(\w,\h);
        \node[anchor=west, inner sep=0pt] at (\w+0.1,\y+0.5*\h)
          {\scriptsize $\conv\left(\text{IP}\right)$};
      \end{scope}
      \begin{scope}[shift={({2*(\w+\dx)},0)}]
        \draw[amber, fill=amber!60] (0,\y) rectangle ++(\w,\h);
        \node[anchor=west, inner sep=0pt] at (\w+0.1,\y+0.5*\h)
          {\scriptsize $\conv(\widehat{X}^{(t+1)})$};
      \end{scope}
    \end{scope}
  \end{tikzpicture}%
  \caption{Strengthened relaxation.}
\end{subfigure} \hfill
  \caption{Iterative strengthening methods from a cutting-plane perspective.}
  \label{f_subproblemCut}
\end{figure}

\section{Case Study: The Vehicle Routing Problem with Time Windows} \label{s_VRPTW}

We illustrate our comparative analysis on the VRPTW. The problem is defined on a directed graph $G = (V, A)$, where $N$ is the customer set, $V := N \cup \{\underline{0}, \overline{0}\}$ is the vertex set, and $A \subset V \times V$ is the arc set. The objective is to determine a set of minimum-cost routes for $K$ identical vehicles, each departing from a single depot represented by two vertices: the source $\underline{0}$ and the sink $\overline{0}$. Each customer $i \in N$ must be visited exactly once within its time window $[e_i, l_i]$ to satisfy its demand $d_i \in \mathbb{Z}_{+}$, while the vehicle load must not exceed capacity $C \in \mathbb{Z}_{+}$. We set $d_{\underline{0}} = d_{\overline{0}} = 0$ and $[e_{\underline{0}}, l_{\underline{0}}] = [e_{\overline{0}}, l_{\overline{0}}] = [E, L]$. Each arc $(i,j) \in A$ has cost $c_{ij} \ge 0$ and travel time $\tau_{ij} \ge 0$, including any service time at vertex $i$. Let $x_{ij}^{k} \in \{0,1\}$ indicate whether vehicle $k$ traverses arc $(i,j)$, $z_{i}^{k} \in \{0,1\}$ whether it visits customer $i$, and $t_{i}^{k} \ge 0$ the service start time at vertex $i$. The IP formulation is given by \eqref{eq_IPVRPTW}, where $M_{ij} := \max\{0, l_i + \tau_{ij} - e_j\}$ for $(i,j) \in A$. The objective~\eqref{eq_IP_0} minimizes total routing cost. Constraints~\eqref{eq_IP_1} ensure that each customer is visited once. Constraints~\eqref{eq_IP_2}--\eqref{eq_IP_5} define feasible routes. Constraints~\eqref{eq_IP_6} link $\vb x^k$ and $\vb z^k$, where the $\vb z^k$ variables could be \emph{substituted out} but are kept for clarity. Finally, Constraints~\eqref{eq_IP_7} define the variable domains.

\journalvspace[-0.5cm]
{\small
\begin{subequations} \label{eq_IPVRPTW}
\begin{align}
\min \quad & \sum_{k =1}^{K} \sum_{(i,j) \in A} c_{ij} x_{ij}^k \label{eq_IP_0} \\
\text{s.t.} \quad 
& \sum_{k = 1}^{K} z_{i}^{k} = 1 && i \in N \label{eq_IP_1} \\
& \sum_{j\in\delta^{+}(i)} x^{k}_{ij} - \sum_{j\in \delta^{-}(i)} x^{k}_{ji} =
\begin{cases}
1, & i = \underline{0} \\
0, & i \in N \\
-1, & i = \overline{0}
\end{cases} && i \in V,\; k=1,\ldots, K \label{eq_IP_2} \\
& \sum_{i \in N} d_i\,z^{k}_i \le C && k =1,\ldots, K \label{eq_IP_3} \\
& e_i\,z^{k}_i \le t^{k}_i \le l_i\,z^{k}_i && i \in N,\; k = 1,\ldots, K \label{eq_IP_4} \\
& E \leq t_{i}^{k} \leq L && i \in \{\underline{0}, \overline{0}\}, \; k =1, \ldots, K \\
& t_{i}^{k} + \tau_{ij} - M_{ij}\bigl(1 - x^{k}_{ij}\bigr) \le t_{j}^{k} && (i,j) \in A,\; k = 1,\ldots, K \label{eq_IP_5} \\
& z^{k}_i = \sum_{j \in\delta^{+}(i)} x^{k}_{ij} && i \in N, \; k = 1,\ldots, K \label{eq_IP_6} \\
& \vb x^{k} \in \{0,1\}^{|A|},\; \;\vb z^{k} \in \{0,1\}^{|N|}, \; \; \vb t^{k} \in \R^{|V|}_{+} && k = 1,\ldots, K, \label{eq_IP_7}
\end{align}
\end{subequations}
}

\subsection{Dantzig-Wolfe Reformulation}  \label{ss_VRPTW_DW}

Selecting~\eqref{eq_IP_1} as the \emph{complicating} constraints and~\eqref{eq_IP_2}--\eqref{eq_IP_6} as the \emph{nice} ones (separable and identical for each vehicle $k=1,\ldots, K$) yields $X:= \{ (\vb x, \vb z) \in \{0,1\}^{|A|+|N|}  \, : \exists \, \vb t \in \R^{|V|}_{+} \text{ s.t. } ~\eqref{eq_IP_2}-\eqref{eq_IP_6} \}=\{(\vb x^{q}, \vb z^{q})\}_{q \in Q}$. Because of~\eqref{eq_IP_6}, we write $\vb x^{q} \in X$. DW reformulation follows:

\journalvspace[-1.0cm]

\begin{subequations} \label{eq_IMP}
\begin{align}
\min \quad & \sum_{k=1}^{K} \sum_{q \in Q} c_{q} \lambda^{k}_{q} \label{eq_IMP_1} \\
\text{s.t.} \quad & \sum_{k=1}^{K} \sum_{q \in Q} z_{i}^{q} \lambda^{k}_{q} = 1 && i \in N \label{eq_IMP_2} \\
& \sum_{q \in Q} \lambda^{k}_{q} = 1 && k = 1, \ldots, K \label{eq_IMP_3} \\
&\lambda_{q}^{k} \in \{0,1\} \quad && k=1,\ldots, K, \, q \in Q,  \label{eq_IMP_6}
\end{align}
\end{subequations}

\journalvspace[-0.75cm]

\noindent where $c_{q}:= \sum_{(i,j) \in A}c_{ij}x_{ij}^{q}$ for every $q \in Q$. As discussed in Appendix \ref{s_blockDiagonal}, an aggregation can be performed to remove index $k$, but we keep it for the cutting planes discussion.

In DW--RMP, we have dual variables $\boldsymbol{\pi}\in \R^{|N|}$ and $\pi_{0} \in \R$ associated with constraints~\eqref{eq_IMP_2} and~\eqref{eq_IMP_3}, and a modified cost $\overline{c}_{ij}=c_{ij} - \pi_{i}$ can be associated with each arc $(i,j) \in A$, where $\pi_{\underline{0}}:=\pi_{0}$. The subproblem $X$ is an Elementary Shortest Path Problem with Resource Constraints (ESPPRC), which can be more efficiently solved as a dynamic program in which states are represented by labels encoding subpaths in $G$ (see \cite{desrosiers2024branch}). Each label is a tuple $\mathscr{L} = (\mathscr{v}, \mathscr{c}, \mathscr{d}, \mathscr{t}, \mathscr{E})$, where $\mathscr{v} \in V$ is the current vertex, $\mathscr{c} \in \mathbb{R}$ is the cumulative reduced cost, $\mathscr{d} \in \mathbb{Z}_+$ is the cumulative load, $\mathscr{t} \in [E, L]$ is the earliest service start time, and $\mathscr{E} \subseteq N$ is the set of visited customers. These tuple components are commonly referred to as \emph{resources}.

The initial label at the source depot $\underline{0}$ is $\mathscr{L} = (\underline{0}, 0, 0, E, \emptyset)$ and labels are extended through outgoing arcs until reaching the sink depot $\overline{0}$. Given a label $\mathscr{L}$ with $\mathscr{v}(\mathscr{L}) = i$, an extension along arc $(i, j) \in A$ (i.e., a state-transition), such that $j \notin \mathscr{E}(\mathscr{L})$, produces a new label $\mathscr{L}'$, where:
\begin{equation}\label{eq_transition}
\begin{aligned}
\mathscr{v}(\mathscr{L}') &= j, \quad
\mathscr{c}(\mathscr{L}') = \mathscr{c}(\mathscr{L}) + \overline{c}_{ij}, \quad
\mathscr{d}(\mathscr{L}') = \mathscr{d}(\mathscr{L}) + d_j, \\
\mathscr{t}(\mathscr{L}') &= \max\{\mathscr{t}(\mathscr{L}) + \tau_{ij}, e_j\}, \quad
\mathscr{E}(\mathscr{L}') = \mathscr{E}(\mathscr{L}) \cup \{j\}.
\end{aligned}
\end{equation}
The resulting label $\mathscr{L}'$ is feasible if $\mathscr{d}(\mathscr{L}') \leq C$ and $\mathscr{t}(\mathscr{L}') \leq l_{j}$. To avoid enumerating all feasible solutions, a \emph{labeling algorithm} discards non-useful subpaths by verifying a dominance rule between labels. Given two label $\mathscr{L}_{1}$ and $\mathscr{L}_{2}$ with $\mathscr{v}(\mathscr{L}_{1})= \mathscr{v}(\mathscr{L}_{2})$, $\mathscr{L}_{1}$ dominates $\mathscr{L}_{2}$ if:  
\begin{equation}\label{eq_dominance}
\mathscr{c}(\mathscr{L}_{1}) \leq \mathscr{c}(\mathscr{L}_{2}) \;\wedge\;
\mathscr{d}(\mathscr{L}_{1}) \leq \mathscr{d}(\mathscr{L}_{2}) \;\wedge\;
\mathscr{t}(\mathscr{L}_{1}) \leq \mathscr{t}(\mathscr{L}_{2}) \;\wedge\;
\mathscr{E}(\mathscr{L}_{1}) \subseteq \mathscr{E}(\mathscr{L}_{2}).
\end{equation}
Conditions~\eqref{eq_dominance} guarantee that any feasible extension of the dominated label $\mathscr{L}_{2}$ is also a feasible extension of the dominant label $\mathscr{L}_{1}$ with an equal or better reduced cost. 

Solving the ESPPRC is strongly NP-hard, whereas the SPPRC can be solved in pseudopolynomial time \citep{desrochers1992new}. For this reason, several elementarity-based relaxations have been proposed, with the \emph{ng}-SPPRC being the most widely used \citep{baldacci2011new}. In this relaxation, each customer $i \in N$ is associated with an \emph{ng}-set $M_i \subseteq N$ containing the $\Delta \in \mathbb{Z}_+$ closest customers to $i$ and customer $i$ itself. An \emph{ng}-route allows a cycle starting and ending at customer $j\in N$ if and only if there exists a customer $i\in N$ in the cycle for which $j \notin M_{i}$.

In the labeling algorithm, the set $\mathscr{E}(\mathscr{L})$ is replaced by $\mathscr{U}(\mathscr{L})$, the set of customers whose visit would violate the \emph{ng}-route cycling restrictions. Unlike $\mathscr{E}(\mathscr{L})$, in the set $\mathscr{U}(\mathscr{L})$, customers may be forgotten and subsequently revisited. When a new label $\mathscr{L}'$ is created, then $\mathscr{U}(\mathscr{L}') = \left(\mathscr{U}(\mathscr{L}) \cap M_j\right) \cup \{j\}$. In the dominance rule, $\mathscr{E}(\mathscr{L}_{1}) \subseteq \mathscr{E}(\mathscr{L}_{2})$ is replaced by $\mathscr{U}(\mathscr{L}_1) \subseteq \mathscr{U}(\mathscr{L}_2)$. Since $|\mathscr{U}(\mathscr{L})| \leq \Delta$, this dominance criterion is less restrictive, potentially allowing more labels to be discarded. The bound quality produced by the \emph{ng}-SPPRC depends on parameter $\Delta$. When $\Delta = |N| - 1$, the \emph{ng}-SPPRC reduces to the ESPPRC; when $\Delta = 0$, it becomes the SPPRC. In the remainder, $X$ denotes the \emph{ng}-SPPRC.

\subsection{Arc-Flow Reformulation} \label{ss_VRPTW_AF}

A DAG $\mathcal{D} = (\mathcal{N}, \mathcal{E})$ satisfying Definition~\ref{d_arcflowRefor}(i)-(ii) for $X$ can be derived from the unfolded dynamic program associated with the \emph{ng}-SPPRC. Each node $u \in \mathcal{N}$ represents a state defined by a tuple $(\mathscr{v}, \mathscr{d}, \mathscr{t}, \mathscr{U})$, and each arc $e \in \mathcal{E}$ corresponds to a state transition, as defined in~\eqref{eq_transition}. Moreover, the root node $r \in \N$ encodes the initial state $(\underline{0}, 0, E, \emptyset)$. Figure~\ref{f_example} illustrates this construction for a small instance with $K = 2$, $C = 3$, $c_{ij}=\tau_{ij}$ for all $(i,j) \in A$, and vertex $0$ represents both $\underline{0}$ and $\overline{0}$.

Due to state-transitions \eqref{eq_transition}, the column associated with arc $e = (u,v) \in \E$ in the projection matrix $\vb T \in \{0,1\}^{|A| \times |\E|}$ has an entry equal to one in the row corresponding to arc $\big(\mathscr{v}(u), \mathscr{v}(v)\big) \in A$. Explicitly, this projection is $x^{k}_{ij}=\sum_{e=(u,v) \in \E \, : \, \mathscr{v}(u) =i, \mathscr{v}(v) = j}y^{k}_{e}$ for $(i,j) \in A$ and $k=1,\ldots, K$. The cost coefficient of arc $e \in \E$ is $c_{e} = (\vb c \vb T)_{e}  = c_{\mathscr{v}(u), \mathscr{v}(v)}$. The coefficient of a constraint~\eqref{eq_IP_1} associated with customer $i \in N$ for an arc $e = (u, v) \in \E$ is $1$ if $\mathscr{v}(u) = i$, and $0$ otherwise. Appendix~\ref{A_example_computation} shows these computations explicitly for the example in Figure~\ref{f_example}. 

The AF reformulation of~\eqref{eq_IPVRPTW} is given in \eqref{eq_AFVRPTW}. As in~\eqref{eq_IMP}, we keep index $k$ for the cutting planes discussion. In AF--RMP, we have dual variables $\boldsymbol{\pi}\in \R^{|N|}$ and $\boldsymbol{\rho} \in \R^{|\N'|}$ associated with constraints~\eqref{eq_AF_VRPTW_1} and~\eqref{eq_AF_VRPTW_2}--\eqref{eq_AF_VRPTW_3}, and a modified cost $\overline{c}_{e}=c_{e} - \pi_{\mathscr{v}(u)}$ can be associated with each arc $e=(u,v) \in \E$, where $\pi_{\underline{0}}:=\rho_{r}$.

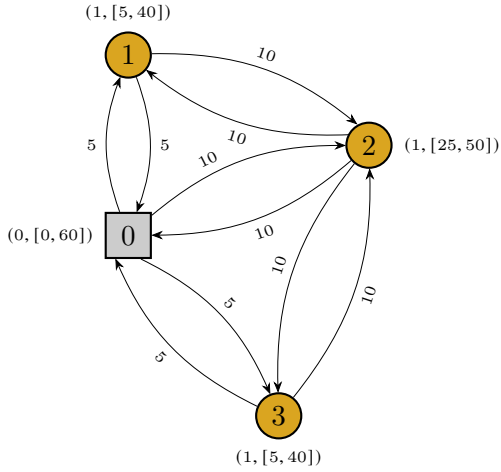
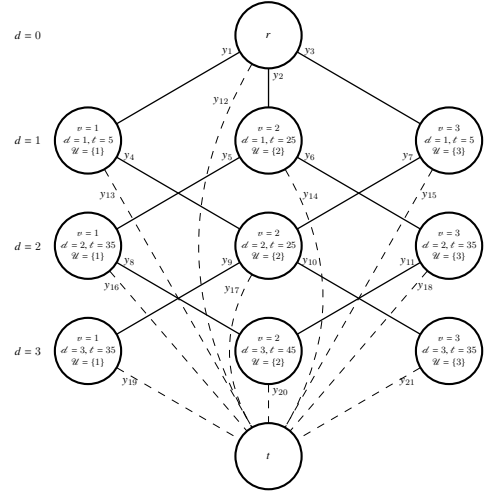
\begin{figure}[htbp]
  \centering
  \begin{subfigure}[b]{0.45\textwidth}
  \centering
  \begin{tikzpicture}[thick, scale=0.80, >=Stealth]
    \node[depot,  label=left:{\tiny$(0,[0,60])$}]      (0) at (0,0) {0};
    \node[customer,label=above:{\tiny$(1,[5,40])$}]    (1) at (0,3) {1};
    \node[customer,label=right:{\tiny$(1,[25,50])$}]   (2) at (4,1.5) {2};
    \node[customer,label=below:{\tiny$(1,[5,40])$}]    (3) at (2.5,-3) {3};

    \draw[->,line width=0.2pt,bend left=20] (0) to node[midway,left]            {\tiny $5$}  (1);
    \draw[->,line width=0.2pt,bend left=20] (1) to node[midway,right]           {\tiny $5$}  (0);

    \draw[->,line width=0.2pt,bend left=20]
  (0) to node[pos=0.35,above,sloped] {\tiny $10$} (2);
    \draw[->,line width=0.2pt,bend left=20] (2) to node[midway,below,sloped]    {\tiny $10$} (0);

    \draw[->,line width=0.2pt,bend left=20] ([xshift=6pt]0.south) to node[midway,above,sloped] {\tiny $5$}  (3);
    \draw[->,line width=0.2pt,bend left=20] (3) to node[midway,below,sloped]    {\tiny $5$}  ([xshift=-6pt]0.south);

    \draw[->,line width=0.2pt,bend left=20]  (1) to node[midway,above,sloped]   {\tiny $10$} ([xshift=5.5pt,yshift=10pt]2.west);
    \draw[->,line width=0.2pt,bend left=20]  ([xshift=2pt,yshift=5pt]2.west) to node[midway,below,sloped] {\tiny $10$} (1);

    \draw[->,line width=0.2pt,bend right=20] (2) to node[midway,above,sloped]   {\tiny $10$} (3);
    \draw[->,line width=0.2pt,bend right=20] (3) to node[midway,below,sloped]   {\tiny $10$} (2);
  \end{tikzpicture}

  \caption{Small-size instance.}
  \label{fig_instance}
\end{subfigure}
  \hfill
  \begin{subfigure}[b]{0.52\textwidth}
    \centering
    \begin{tikzpicture}[
        scale=0.40,
        every node/.style={transform shape},
        >=Stealth,
        thick,
        node distance=2.2cm and 3.0cm,
        on grid,
        auto,
        myNode/.style={
          circle, draw,
          fill=white,
          minimum size=22mm,
          inner sep=1pt,
          font=\footnotesize,
          align=center      
        },
        every edge/.style={->},
        every label/.append style={midway,font=\tiny},
      ]
      \pgfdeclarelayer{background}
      \pgfsetlayers{background,main}

      \node (layer0) at (-8,   0) {$d=0$};
      \node (layer1) at (-8, -3.5) {$d=1$};
      \node (layer2) at (-8, -7.0) {$d=2$};
      \node (layer3) at (-8,-10.5) {$d=3$};

      \node[myNode] (R) at (0,0) {\normalsize $r$};

      \node[myNode] (A) at (-6,-3.5) {
        $\mathscr{v}=1$\\$\mathscr{d}=1,\mathscr{t}=5$\\$\mathscr{U}=\{1\}$
      };
      \node[myNode] (B) at (0,-3.5) {
        $\mathscr{v}=2$\\$\mathscr{d}=1,\mathscr{t}=25$ \\$\mathscr{U}=\{2\}$
      };
      \node[myNode] (C) at (6,-3.5) {
        $\mathscr{v}=3$\\$\mathscr{d}=1,\mathscr{t}=5$ \\$\mathscr{U}=\{3\}$
      };

      \node[myNode] (M) at (0,-7.0) {
        $\mathscr{v}=2$\\$\mathscr{d}=2,\mathscr{t}=25$ \\ $\mathscr{U}=\{2\}$
      };
      \node[myNode] (F) at (-6,-7.0) {
        $\mathscr{v}=1$\\$\mathscr{d}=2,\mathscr{t}=35$ \\$\mathscr{U}=\{1\}$
      };
      \node[myNode] (G) at (6,-7.0) {
        $\mathscr{v}=3$ \\$\mathscr{d}=2,\mathscr{t}=35$ \\ $\mathscr{U}=\{3\}$
      };

      \node[myNode] (I) at (-6,-10.5) {
        $\mathscr{v}=1$ \\$\mathscr{d}=3,\mathscr{t}=35$ \\ $\mathscr{U}=\{1\}$
      };

     \node[myNode] (I2) at (0,-10.5) {
        $\mathscr{v}=2$ \\$\mathscr{d}=3,\mathscr{t}=45$ \\ $\mathscr{U}=\{2\}$
      };
      
      \node[myNode] (J) at (6,-10.5) {
        $\mathscr{v}=3$\\$\mathscr{d}=3,\mathscr{t}=35$ \\ $\mathscr{U}=\{3\}$
      };

      \node[myNode] (T) at (0,-14) {\normalsize $t$};

      \draw[line width=0.5pt] (R) -- node[pos=0.1, above] {$y_{1}$} (A);
      \draw[line width=0.5pt] (R) -- node[pos=0.2, right] {$y_{2}$} (B);
      \draw[line width=0.5pt] (R) -- node[pos=0.1, above] {$y_{3}$} (C);
      \draw[line width=0.5pt] (A) -- node[pos=0.1, above] {$y_{4}$} (M);
      \draw[line width=0.5pt] (B) -- node[pos=0.1, above] {$y_{5}$} (F);
      \draw[line width=0.5pt] (B) -- node[pos=0.1, above] {$y_{6}$} (G);
      \draw[line width=0.5pt] (C) -- node[pos=0.1, above] {$y_{7}$} (M);
      \draw[line width=0.5pt] (F) -- node[pos=0.1, above] {$y_{8}$} (I2);
      \draw[line width=0.5pt] (M) -- node[pos=0.1, above] {$y_{9}$} (I);
      \draw[line width=0.5pt] (M) -- node[pos=0.1, above] {$y_{10}$} (J);
      \draw[line width=0.5pt] (G) -- node[pos=0.1, above] {$y_{11}$} (I2);

      \begin{pgfonlayer}{background}
        \draw[dashed, line width=0.1pt, bend right=30] (R) to node[pos=0.1, left] {$y_{12}$} (T);
        \draw[dashed, line width=0.1pt] (A) -- node[pos=0.1, left] {$y_{13}$} (T);
        \draw[dashed, line width=0.1pt, bend left=30] (B) to node[pos=0.1, right] {$y_{14}$} (T);
        \draw[dashed, line width=0.1pt] (C) -- node[pos=0.1, right] {$y_{15}$} (T);
        \draw[dashed, line width=0.1pt, bend right=30] (M) to node[pos=0.1, left] {$y_{17}$} (T);
        \draw[dashed, line width=0.1pt] (F) -- node[pos=0.1, left] {$y_{16}$} (T);
        \draw[dashed, line width=0.1pt] (G) -- node[pos=0.1, right] {$y_{18}$} (T);
        \draw[dashed, line width=0.1pt] (I) -- node[pos=0.2, left] {$y_{19}$} (T);
        \draw[dashed, line width=0.1pt] (I2) -- node[pos=0.2, right] {$y_{20}$} (T);
        \draw[dashed, line width=0.1pt] (J) -- node[pos=0.2, right] {$y_{21}$} (T);
      \end{pgfonlayer}

    \end{tikzpicture}

    \caption{DAG $\D=(\N, \E)$ with $\Delta = 0$.}
    \label{f_DAG}
  \end{subfigure}

  \caption{Illustrative example.}
  \label{f_example}
\end{figure}

\begin{subequations} \label{eq_AFVRPTW}
  \begin{align}
    \min \quad & \sum_{k=1}^{K} \sum_{e \in \E} c_{e} y^{k}_{e}  \\
    \text{s.t.} \quad
    & \sum_{k=1}^{K}\sum_{\substack{e =(u,v) \in \E : \\ \, \mathscr{v}(u) = i}} y^{k}_{e} = 1 && i \in N \label{eq_AF_VRPTW_1}\\
    & \sum_{e \in \delta^{+}(r)} y^{k}_{e} = 1  && k =1,\ldots,K \label{eq_AF_VRPTW_2}\\ 
    & \sum_{e \in \delta^{+}(u)} y^{k}_{e} - \sum_{e \in \delta^{-}(u)} y^{k}_{e} = 0 && u \in \N \setminus\{r,t\},\, k =1,\ldots,K \label{eq_AF_VRPTW_3}\\ 
    & y^{k}_{e} \in \{0,1\} && e \in \E, k = 1,\ldots, K.
  \end{align}
\end{subequations}

\subsection{Cutting Planes}

\paragraph{\textbf{Cuts in the Original Space}} Several families of valid inequalities for the VRP have been derived over the original space, with generalized subtour elimination constraints (GSECs) being among the most popular ones \citep{laporte1986generalized}. Given a subset $S \subseteq N$ of customers and a lower bound $\kappa_{S}$ on the number of vehicles required to serve all customers in $S$, a GSEC for~\eqref{eq_IPVRPTW} is written as:
\begin{equation}
    \sum_{k=1}^{K}\sum_{(i,j) \in \delta^{-}(S)} x^{k}_{ij} \geq \kappa_{S},
    \label{eq_kPCs}
\end{equation}
where $\delta^{-}(S):=\{(i,j) \in A \, : \, i \in V \setminus S, \, j \in S\}$ is the set of arcs entering $S$. In the reformulations, a cut of the form~\eqref{eq_kPCs} is kept at the master level, as it involves all vehicles. In DW~\eqref{eq_IMP}, this cut is:
$
\sum_{k=1}^{K} \sum_{q \in Q} \left( \sum_{(i,j) \in \delta^{-}(S)} x_{ij}^{q} \right) \, \lambda_{q}^{k} \geq \kappa_{S},
$
where $\mu \geq 0$ is the associated dual variable in DW--RMP. For each arc $(i,j) \in \delta^{-}(S)$, the modified cost becomes $\overline{c}_{ij} = c_{ij} - \pi_{i} - \mu$. In AF~\eqref{eq_AFVRPTW}, the lifted cut is:
\begin{equation}
  \sum_{k=1}^{K}
  \sum_{\substack{e=(u,v)\in \mathcal{E}:\\
                  \mathscr{v}(u)\notin S,\;
                  \mathscr{v}(v)\in S}}
        y^{k}_{e}
  \;\ge\;
  \kappa_{S}, \label{eq_AF_kPC}
\end{equation}
\noindent and the modified arc cost becomes $\overline{c}_{e}=c_{e} - \pi_{\mathscr{v}(u)}-\mu$ for each arc $e =(u,v) \in \E$ such that $\mathscr{v}(u) \notin S$ and $\mathscr{v}(v) \in S$. In Figure~\ref{f_example}(a) and~\ref{f_example}(b), a GSEC~\eqref{eq_kPCs} for $S=\{1,3\}$ and its lifted counterpart~\eqref{eq_AF_kPC} are: $\sum_{k=1}^{K} (x^{k}_{\underline{0},1} + x^{k}_{\underline{0}, 3}+x^{k}_{2,1} + x^{k}_{2,3}) \geq \kappa_{S}$ and $\sum_{k=1}^{K} (y^{k}_{1} + y^{k}_{3} + y^{k}_{5} + y^{k}_{6} + y^{k}_{9} + y^{k}_{10}) \geq \kappa_{S}$. 

\paragraph{\textbf{Cuts in the Dantzig-Wolfe Space}} General cuts, such as the CG rank-1 cuts, have been used for DW~\eqref{eq_IMP}. A CG rank-1 cut for~\eqref{eq_IMP} on constraints~\eqref{eq_IMP_2} is given by:
\begin{equation}
    \sum_{k=1}^{K} \sum_{q \in Q}\left\lfloor  \sum_{i \in N} u_{i} z_{i}^{q} \right \rfloor \lambda_{q}^{k} \leq \left\lfloor \sum_{i \in N} u_{i} \right \rfloor,
    \label{eq_CGCut}
\end{equation}
where $\vb u \in [0,1)^{|N|} \cap \Q^{|N|}_{+}$ and $g(\vb A \vb x^{q}) := \left\lfloor  \sum_{i \in N} u_{i} z_{i}^{q} \right \rfloor$ is the function computing the cut coefficient for the $\lambda_{q}^{k}$-variable. As outlined by \cite{petersen2008chvatal}, function $g$ can be linearized in the original space by adding integer variables $\gamma^{k}$ for $k=1,\ldots, K$ and the following constraints to IP~\eqref{eq_IPVRPTW}:
\[
\sum_{k=1}^{K} \gamma^{k} \leq \left\lfloor \sum_{i \in N} u_{i}\right\rfloor, \;\;\;\;
    \sum_{i \in N} u_{i}  z_{i}^{k} - (1-\epsilon) \leq \gamma^{k} \leq  \sum_{i \in N}u_{i}  z_{i}^{k}  \;\;\; \forall k,\;\;\;\; \text{and} \;\;\;\;
    \gamma^{k} \in \Z_{+}   \;\;\; \forall k,
\]

\noindent where a small $\epsilon>0$ is used to model strict inequality. Keeping the first constraint at the master level and the remaining ones at the subproblem level yields $\widehat{X}:=\big\{(\vb x, \gamma) \in \{0,1\}^{|A|}\times \Z_{+} \, : \, \vb x \in X, \gamma = g(\vb A \vb x)\big\}$. In the dynamic program, variable $\gamma$ expands the state-space by introducing a new resource $\mathscr{g}(\mathscr{L}) \in [0,1)$, capturing the fractional part of the cut coefficient. When extending a label $\mathscr{L}$ along arc $(i,j) \in A$ with $j \in N$, this resource is updated as $\mathscr{g}(\mathscr{L}') =  \left( \mathscr{g}(\mathscr{L}) + u_{j} \right) \mod 1$. Whenever $\mathscr{g}(\mathscr{L}) > \mathscr{g}(\mathscr{L}')$, the cut coefficient increases by one. So, the cumulative reduced cost is updated as $\mathscr{c}(\mathscr{L}') =  \mathscr{c}(\mathscr{L}) + \overline{c}_{ij} - \mu \cdot [\mathscr{g}(\mathscr{L})>\mathscr{g}(\mathscr{L}')]$, where $\mu \leq 0$ is the dual variable associated with cut~\eqref{eq_CGCut} in DW--RMP and $[\mathbb{P}] = 1$ if $\mathbb{P}$ is true, and $[\mathbb{P}] = 0$ otherwise. Finally, the dominance rule~\eqref{eq_dominance} must account for these new resources (one per cut), as detailed in \cite{jepsen2008subset}.

A DAG $\widehat{\D}=(\widehat{\N}, \widehat{\E})$ satisfying Definition~\ref{d_arcflowRefor}(i)-(ii) for $\widehat{X}$ can be constructed from this expanded dynamic program. The projection is given by $\gamma^{k} = \sum_{e=(u,v) \in \widehat{\E} \, : \, \mathscr{g}(u) > \mathscr{g}(v)} y_{e}^{k}$, leading to the following lifted cut in AF~\eqref{eq_AFVRPTW}:
\begin{equation*}
\sum_{k=1}^{K}\sum_{\substack{e=(u,v) \in \widehat{\E} : \\ \mathscr{g}(u) > \mathscr{g}(v)}}y_{e}^{k} \leq \left\lfloor \sum_{i \in N} u_{i} \right \rfloor,
\end{equation*}
\noindent and the modified arc cost becomes $\overline{c}_{e}=c_{e}-\pi_{\mathscr{v}(u)}-\mu$ for $e=(u,v) \in \widehat{\E}$ such that $ \mathscr{g}(\mathscr{L})>\mathscr{g}(\mathscr{L}')$. Appendix \ref{A_CGRank1Cut} illustrates a CG rank-1 cut in the DW and AF spaces for the example in Figure \ref{f_example}. 

\paragraph{\textbf{Iterative Subproblem Strengthening}} 
We apply two strengthening methods for the VRPTW: decremental state-space relaxation and column elimination.  Appendix~\ref{A_subproblemRelax} illustrates both methods on the example in Figure \ref{f_example}. 

\section{Empirical Comparison of Dantzig-Wolfe and Arc-Flow Reformulations} \label{s_computational}

\subsection{Experimental Setup} \label{ss_setup}
To test our computational insights, we benchmark DW and AF on the linear relaxation (root node) of the VRPTW. We use the benchmark of \cite{solomon1987algorithms}, which follows a four-parameter naming convention \texttt{DTm-n}. Parameter \texttt{D} is the customer geographical distribution: R for random, C for clustered, or RC for a mix of random and clustered. Parameter \texttt{T} is the tightness of the time windows, where instances of type 1 have tighter time windows than instances of type 2. Parameter \texttt{m} is the two-digit instance number, and parameter \texttt{n} is the number of customers.

We solve both DW and AF using column generation. The subproblem relaxation is defined as an \emph{ng}-SPPRC and is solved by the labeling algorithm described before. The (exact) labeling algorithm is invoked only when a heuristic algorithm fails to find a negative reduced cost route. This heuristic algorithm is derived from the exact one by disregarding the condition $\mathscr{U}(\mathscr{L}_1) \subseteq \mathscr{U}(\mathscr{L}_2)$ in the dominance check~\eqref{eq_dominance}. Since the vehicle capacity is not very constraining in these instances, we add a single rounded capacity inequality~\eqref{eq_kPCs} involving all customers a priori. We employ the $\alpha$-schedule dual-price smoothing technique, which helps mitigate common convergence issues in column generation \citep{pessoa2018automation}. Based on preliminary experiments, we set the initial smoothing factor to $0.65$ for DW and $0.50$ for AF. For aggregated results, we report the geometric mean, a more robust and conservative measure than the arithmetic mean, as it is less sensitive to large values. Note that we benchmark DW and AF under identical conditions, without tailored enhancements.

All algorithms are implemented in Java using the Branch-and-Price framework of the Java OR library (\href{https://github.com/coin-or/jorlib}{\texttt{jORLib}}) and the Java graph theory library (\href{https://jgrapht.org/}{\texttt{JGraphT}}) by \cite{michail2020jgrapht}. The LPs are solved using IBM CPLEX solver, version 22.1. All experiments are run on a machine with an Intel(R) Xeon(R) Gold 6230R CPU @ 2.10GHz  with 32GB of RAM and a two-hour time limit. 

\subsection{Assessment of Column Generation and Reformulation Sizes} \label{ss_cg_sizes}

We first evaluate the computational benefits of using column generation to solve DW and AF, i.e., solving DW--RMP and AF--RMP, against solving DW--MP and AF--MP statically, after full enumeration. We also analyze reformulation sizes to gain insight into the dimension of the DW and AF spaces. Because enumeration is computationally expensive, these experiments are limited to the \texttt{R1} 15-customer instances with 
\emph{ng}-sets of size $\Delta = 6$. We hypothesize that optimal RMPs are orders of magnitude smaller than fully enumerated MPs. By construction, the number of columns in DW--MP equals the number of paths in AF--MP; we expect a similar correspondence between~RMPs.

Table~\ref{t_combined_15} compares the full MPs and optimal RMPs of DW and AF. Columns 2--5 report the number of columns and CPU time for solving DW--MP and DW--RMP, while Columns 6--7 present the corresponding ratios. Columns 8--11 show the number of paths and CPU time for solving AF--MP and AF--RMP, and Columns 12--15 provide the associated ratios, including the number of arcs and nodes in AF--MP relative to AF--RMP. CPU times for the full MPs exclude variable enumeration.

For DW, the MP includes about 155 times more columns than the RMP. While the static approach is slightly faster on a few small instances (e.g., \texttt{R101-15}), the dynamic approach is on average 1.3 times faster. For AF, the DAG in the MP contains roughly 150 times more paths than that of the RMP, resulting in AF–MP having about 90 times more arcs (variables) and 100 times more nodes (constraints). Consequently, the dynamic approach to AF is, on average, approximately six times faster than static AF. Finally, the number of DW–MP columns and AF–MP paths are equivalent, while their RMP counterparts are similar, averaging 133 columns for DW and 139 paths for AF.

\begin{table}[tb]
  \centering
    \begin{adjustbox}{max width=\textwidth}
    \begin{tabular}{l*{14}{r}}
      \toprule
      & \multicolumn{6}{c}{Dantzig-Wolfe}
      & \multicolumn{8}{c}{Arc-Flow}
      \\  \cmidrule(lr){2-7} \cmidrule(lr){8-15}
      
      & \multicolumn{2}{c}{MP}
      & \multicolumn{2}{c}{RMP}
      & \multicolumn{2}{c}{MP-to-RMP ratios}
      & \multicolumn{2}{c}{MP}
      & \multicolumn{2}{c}{RMP}
      & \multicolumn{4}{c}{MP-to-RMP ratios} \\
      \cmidrule(lr){2-3} \cmidrule(lr){4-5} \cmidrule(lr){6-7} \cmidrule(lr){8-9} \cmidrule(lr){10-11} \cmidrule(lr){12-15}
      Instance
        & Columns & Time (s)
        & Columns & Time (s)
        & Columns & Time
        & Paths & Time (s)
        & Paths & Time (s)
        & Paths & Time
        & Arcs & Nodes \\
      \midrule
      R101-15
        &    204 & 0.01 &    39 & 0.05 &   5.2 & 0.20
        &    204 & 0.02 &    55 & 0.07 &   3.7 & 0.29
        &   3.6  &   3.5  \\
      R102-15
        & 27,739 & 0.15 &   110 & 0.07 & 252.2 & 2.14
        & 27,739 & 0.4  &   109 & 0.1  & 254.5 & 4.00
        & 152.7 & 158.2 \\
      R103-15
        & 36,707 & 0.22 &   125 & 0.09 & 293.7 & 2.44
        & 36,707 & 0.63 &   128 & 0.13 & 286.8 & 4.85
        & 160.9 & 170.6 \\
      R104-15
        &164,120 & 0.93 &   205 & 0.14 & 800.6 & 6.64
        &164,120 & 7.78 &   182 & 0.13 & 901.8 &59.85
        &455.8  &510.8 \\
      R105-15
        &    680 & 0.01 &    78 & 0.06 &   8.7 & 0.01
        &    680 & 0.02 &    98 & 0.11 &   6.9 & 0.18
        &   5.3  &   5.8  \\
      R106-15
        & 53,679 & 0.30 &   129 & 0.09 & 416.1 & 3.33
        & 53,679 & 1.71 &   134 & 0.11 & 400.6 &15.55
        &234.1  &256.1 \\
      R107-15
        & 66,629 & 0.36 &   144 & 0.10 & 462.7 & 3.60
        & 66,629 & 2.30 &   144 & 0.12 & 462.7 &19.17
        &244.2  &288.8 \\
      R108-15
        &236,941 & 1.32 &   226 & 0.12 &1,048.4 &11.00
        &236,941 &26.32 &   221 & 0.18 &1,072.1 &146.22
        &507.6  &638.4 \\
      R109-15
        &  4,315 & 0.04 &   103 & 0.07 &  41.9 & 0.57
        &  4,315 & 0.07 &   117 & 0.12 &  36.9 & 0.58
        & 26.8  & 28.6  \\
      R110-15
        & 17,392 & 0.11 &   189 & 0.13 &  92.0 & 0.85
        & 17,392 & 0.35 &   173 & 0.13 & 100.5 & 2.69
        & 61.6  & 69.4  \\
      R111-15
        & 53,407 & 0.28 &   156 & 0.10 & 342.4 & 2.80
        & 53,407 & 2.17 &   171 & 0.19 & 312.3 &11.42
        &177.4  &206.9 \\
      R112-15
        & 83,275 & 0.43 &   277 & 0.17 & 300.6 & 2.53
        & 83,275 & 7.32 &   249 & 0.19 & 334.4 &38.53
        &171.7  &199.0 \\
      \midrule
      \textbf{Geo.\ Mean}
        & \textbf{20,799.7} & \textbf{0.16}
        & \textbf{132.9}    & \textbf{0.09}
        & \textbf{156.5}    & \textbf{1.33}
        & \textbf{20,799.7} & \textbf{0.75}
        & \textbf{138.6}    & \textbf{0.13}
        & \textbf{150.1}    & \textbf{5.91}
        & \textbf{90.8}     & \textbf{100.7} \\
      \bottomrule
    \end{tabular}%
  \end{adjustbox}
  \caption{Comparison of DW and AF full MPs and optimal RMPs on R1 15-customer instances.}
  \label{t_combined_15}
\end{table}
\renewcommand{\arraystretch}{1}

\subsection{Benchmark of Dantzig-Wolfe and Arc-Flow} \label{ss_benchmarkDW_AF}

We next compare the performance of DW and AF on 25- and 50-customer instances, yielding a testbed of 112 instances with varying characteristics. We evaluate two different \emph{ng}-relaxations: A weaker relaxation with $\Delta = 6$ and a stronger one with $\Delta = 12$. Based on the discussion in~\S\ref{ss_connections}, we hypothesize that AF will perform better under weaker relaxations or clustered instances, where path recombination is more likely; otherwise, DW is expected to outperform AF.

Table~\ref{t_instanceFeatures} compares DW and AF across instance features and relaxation strengths, considering only instances solved within the time limit. For DW, Columns 2--5 present the lower bound, the time spent on the RMP, the time spent on the pricing problem, and the total CPU time. Columns 6--10 present the corresponding AF metrics, with Column 10 additionally reporting the ratio of DAG paths to priced routes as a measure of path recombination. Finally, Columns 11--13 report DW-to-AF per-instance ratios for the number of variables, pricing iterations, and total CPU time. Detailed results by instance group can be found in Appendix~\ref{A_results_noCuts}. 

\begin{table}[tb]
  \centering
  \begin{adjustbox}{max width=\textwidth}
  \begin{tabular}{l l
                  r r r r
                  r r r r r
                  r r r}
    \toprule
    & &
    \multicolumn{4}{c}{Dantzig--Wolfe} &
    \multicolumn{5}{c}{Arc--Flow} &
    \multicolumn{3}{c}{DW-to-AF Ratios} \\
    \cmidrule(lr){3-6}
    \cmidrule(lr){7-11}
    \cmidrule(lr){12-14}
    Feature & Value
      & LB & RMP (s) & PP (s) & Time (s)
      & LB & RMP (s) & PP (s) & Time (s) & Recomb.
      & Variables & Iterations & Time \\
    \midrule
    Customer  & 25
      & 300.95 & 0.07 & 1.06 & 1.21
      & 300.95 & 0.23 & 0.90 & 1.29 & 1.49
      & 0.26 & 1.21 & 0.94 \\
    number    & 50
      & 528.65 & 0.52 & 21.30 & 22.52
      & 528.65 & 2.61 & 14.43 & 18.22 & 2.27
      & 0.23 & 1.45 & 1.23 \\
    \midrule
    Customer  & R
      & 527.21 & 0.12 & 3.00 & 3.31
      & 527.21 & 0.57 & 3.02 & 3.94 & 1.11
      & 0.21 & 0.97 & 0.84 \\
    distribution & C
      & 269.57 & 0.33 & 6.37 & 7.02
      & 269.57 & 1.02 & 3.29 & 4.77 & 3.46
      & 0.29 & 1.96 & 1.47 \\
                & RC
      & 446.51 & 0.19 & 5.61 & 6.15
      & 446.51 & 0.79 & 4.74 & 6.08 & 1.62
      & 0.24 & 1.22 & 1.01 \\
    \midrule
    Time      & Tight
      & 417.39 & 0.09 & 1.49 & 1.69
      & 417.39 & 0.28 & 1.36 & 1.84 & 1.21
      & 0.25 & 1.11 & 0.91 \\
    windows   & Loose
      & 381.17 & 0.41 & 15.14 & 16.20
      & 381.17 & 2.10 & 9.60 & 12.81 & 2.81
      & 0.24 & 1.58 & 1.27 \\
    \midrule
    \midrule 
    Relaxation & $\Delta=6$
      & 398.87 & 0.19 & 4.75 & 5.23
      & 398.87 & 0.77 & 3.61 & 4.85 & 1.84
      & 0.25 & 1.32 & 1.08 \\
               & $\Delta=12$
      & 402.92 & 0.16 & 3.88 & 4.27
      & 402.92 & 0.70 & 3.57 & 4.81 & 1.37
      & 0.22 & 1.14 & 0.89 \\
    \bottomrule
  \end{tabular}
  \end{adjustbox}
  \caption{Comparison of DW and AF by instance features and relaxation strength (Base case: $\Delta = 6$)}
  \label{t_instanceFeatures}
\end{table}

For the 25-customer instances (first row), both methods take about 1.3 seconds, with DW being marginally faster overall. For the 50-customer instances (second row), AF is about 1.2 times faster than DW. The main difference is the distribution of computational effort. In the 50-customer cases, DW spends about half-a-second on the RMP and just over 21 seconds on pricing, whereas AF spends roughly 3 seconds on the RMP and 14 seconds on pricing. AF’s higher RMP overhead is due to introducing about four times more variables, whereas its lower pricing time is due to faster convergence. Through recombination, paths generate twice as many routes as those explicitly priced, and AF performs 30\% fewer iterations on average. 

With clustered customers and loose time windows, AF is about 1.4 and 1.3 times faster than DW, with recombination values of approximately 3.5 and 2.8, respectively. In these settings, high recombination gives AF a computational advantage. Conversely, with randomly distributed customers and tighter time windows, DW is $1.2$ and $1.1$ times faster, as the AF–RMP contains about four times more variables, but recombination is low, favoring DW’s compact column representation. Under the stronger relaxation ($\Delta = 12$), AF’s advantage diminishes as the state space expands and recombination decreases (from 1.8 to 1.4). Consequently, all DW-to-AF ratios decrease: variables from 0.25 to 0.22, iterations from 1.32 to 1.14, and time from 1.08 to 0.89.

To illustrate this behavior, Figure~\ref{f_BoundIterTime} shows bound progression over iterations and time for instances \texttt{R110-25}, where DW performs better, and \texttt{C208-25}, where AF does. In the former, both follow similar trends, with AF performing more iterations (33 vs. 23); since DW iterations are cheaper, it finishes sooner. In the latter, AF converges in far fewer iterations (75 vs. 173), while DW stalls between iterations 118–171. Here, AF’s higher per-iteration cost is offset by faster convergence.

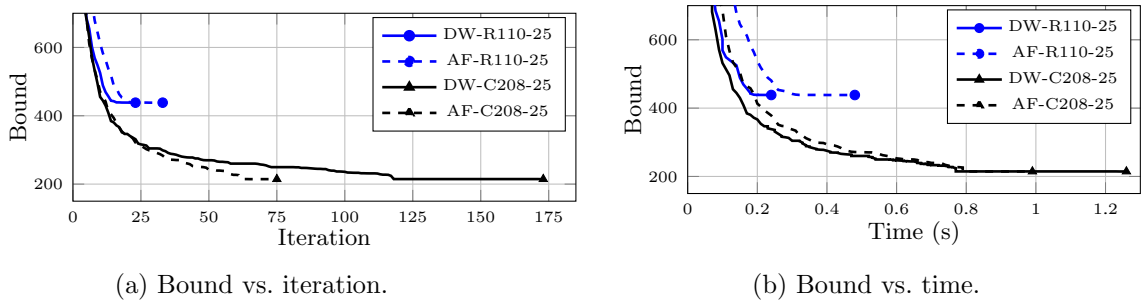
\begin{figure}[tb]
  \centering
  \begin{adjustbox}{max size={\textwidth}{0.20\textheight}, center}
    \begin{minipage}{0.90\textwidth}
      \centering

\begin{subfigure}[b]{0.45\textwidth}
  \pgfplotstableread[col sep=tab]{plot-bound.txt}\datatableIter
  \centering
  \begin{tikzpicture}
    \begin{axis}[
      scale only axis,
      width=\linewidth,
      height=2.5cm,
      font=\scriptsize,
      xlabel style={yshift=1ex},
      ylabel style={yshift=-1ex},
      tick label style={font=\tiny},
      legend style={
        font=\tiny,
        at={(0.98,0.98)},
        anchor=north east,
        legend image post style={scale=1.2},
      },
      xlabel={Iteration},
      ylabel={Bound},
      grid=major,
      ymin=150, ymax=700,
      xmin=0,   xmax=185,
      xtick distance=25,
      unbounded coords=discard,
    ]

      \addplot[blue,  solid,  line width=1pt, forget plot]
        table[x=iter,y=DWR110] \datatableIter;
      \addplot[blue,  dashed, line width=1pt, forget plot]
        table[x=iter,y=AFR110] \datatableIter;

      \addplot[black, solid,  line width=1pt, forget plot]
        table[x=iter,y=DWC208] \datatableIter;
      \addplot[black, dashed, line width=1pt, forget plot]
        table[x=iter,y=AFC208] \datatableIter;

      \addplot[only marks, mark=*,         mark size=1.8pt, color=blue,  forget plot]
        table[x=iter, y=DWR110-inc] \datatableIter;
      \addplot[only marks, mark=*,         mark size=1.8pt, color=blue,  forget plot]
        table[x=iter, y=AFR110-inc] \datatableIter;

      \addplot[only marks, mark=triangle*, mark size=1.8pt, color=black, forget plot]
        table[x=iter, y=AFC208-inc] \datatableIter;
      \addplot[only marks, mark=triangle*, mark size=1.8pt, color=black, forget plot]
        table[x=iter, y=DWC208-inc] \datatableIter; 

      \addlegendimage{blue,  solid,  line width=1pt, mark=*,         mark size=1.4pt}
      \addlegendentry{DW-R110-25}
    \addlegendimage{blue,  dashed, line width=1pt,
      mark=*, mark size=1.4pt, mark options={fill=blue, draw=blue}}
    \addlegendentry{AF-R110-25}

      \addlegendimage{black, solid,  line width=1pt, mark=triangle*, mark size=1.4pt}
      \addlegendentry{DW-C208-25}
     \addlegendimage{black, dashed, line width=1pt,
      mark=triangle*, mark size=1.4pt, mark options={fill=black, draw=black}}
    \addlegendentry{AF-C208-25}

    \end{axis}
  \end{tikzpicture}
  \subcaption{\small Bound vs.\ iteration.}
  \label{fig:bound-vs-iter}
\end{subfigure}
\hfill
\begin{subfigure}[b]{0.45\textwidth}
  \pgfplotstableread[col sep=tab]{plot-time.txt}\datatableTime
  \centering
  \begin{tikzpicture}
    \begin{axis}[
      scale only axis,
      width=0.9\linewidth,
      height=2.5cm,
      font=\scriptsize,
      xlabel style={yshift=1ex},
      ylabel style={yshift=-1ex},
      tick label style={font=\tiny},
      legend style={
        font=\tiny,
        at={(0.98,0.98)},
        anchor=north east
      },
      xlabel={Time (s)},
      ylabel={Bound},
      grid=major,
      ymin=150, ymax=700,
      xmin=0,   xmax=1.3,
    ]

      \addplot[blue,  solid,  line width=1pt, forget plot]
        table[x=DWR110_T, y=DWR110_B] \datatableTime;
      \addplot[blue,  dashed, line width=1pt, forget plot]
        table[x=AFR110_T, y=AFR110_B] \datatableTime;

      \addplot[black, solid,  line width=1pt, forget plot]
        table[x=DWC208_T, y=DWC208_B] \datatableTime;
      \addplot[black, dashed, line width=1pt, forget plot]
        table[x=AFC208_T, y=AFC208_B] \datatableTime;

\addplot[only marks, mark=*,         mark size=1.8pt, color=blue,  forget plot]
  table[x=DWR110_T, y=DWR110_T-inc] \datatableTime;
\addplot[only marks, mark=*,         mark size=1.8pt, color=blue,  forget plot]
  table[x=AFR110_T, y=AFR110_T-inc] \datatableTime;

\addplot[only marks, mark=triangle*, mark size=1.8pt, color=black, forget plot]
  table[x=DWC208_T, y=DWC208_T-inc] \datatableTime;
\addplot[only marks, mark=triangle*, mark size=1.8pt, color=black, forget plot]
  table[x=AFC208_T, y=AFC208_T-inc] \datatableTime;

      \addlegendimage{blue,  solid,  line width=1pt,
        mark=*,         mark size=1.4pt, mark options={fill=blue,  draw=blue}}
      \addlegendentry{DW-R110-25}

      \addlegendimage{blue,  dashed, line width=1pt,
        mark=*,         mark size=1.4pt, mark options={fill=blue,  draw=blue}}
      \addlegendentry{AF-R110-25}

      \addlegendimage{black, solid,  line width=1pt,
        mark=triangle*, mark size=1.4pt, mark options={fill=black, draw=black}}
      \addlegendentry{DW-C208-25}

      \addlegendimage{black, dashed, line width=1pt,
        mark=triangle*, mark size=1.4pt, mark options={fill=black, draw=black}}
      \addlegendentry{AF-C208-25}

    \end{axis}
  \end{tikzpicture}
  \subcaption{\small Bound vs.\ time.}
  \label{fig:bound-vs-time}
    \end{subfigure}

    \end{minipage}
  \end{adjustbox}

  \caption{Bound progression over iterations and time for \texttt{R110-25} and \texttt{C208-25} with $\Delta = 6$. }
  \label{f_BoundIterTime}
\end{figure}

\subsection{Benchmark of Dantzig-Wolfe and Arc-Flow with Cuts} \label{ss_benchmarkDW_AF_cuts}

The following set of experiments compares DW and AF when subset-row cuts (SRCs) of size three are added \citep{jepsen2008subset}. These rank-1 CG cuts~\eqref{eq_CGCut} use three nonzero multipliers, each equal to $1/2$. Separation is performed by enumeration and, since they increase subproblem complexity, we follow the rules used in \cite{yamin2025electric}: (i) add at most 30 cuts per iteration, prioritizing the most violated; (ii) allow each customer to appear in at most five triplets; and (iii) add cuts only if the strongest violation exceeds 0.1, with a total limit of 100 cuts. Prior DW research shows that these cuts effectively strengthen the LP bound, and by Proposition~\ref{p_projectionStrengthLambda}, AF should obtain the same improvement. However, as discussed in \S\ref{s_cutsLambda}, DW absorbs this complexity in the subproblem, while AF is also affected at the RMP, where the enlarged state space increases variables and constraints and reduces recombination. We therefore expect these cuts to impact AF more severely than DW.

Table~\ref{t_instanceFeaturesCuts} reports the same information as Table~\ref{t_instanceFeatures} with SRCs. Columns 4 and 9 additionally report the average number of cuts added to DW and AF. We report the arithmetic mean in these columns since some values are zero. Detailed results by instance group can be found in Appendix~\ref{A_results_cuts}. 

\begin{table}[b]
  \centering
  \begin{adjustbox}{max width=\textwidth}
  \begin{tabular}{l l
                  r r r r r
                  r r r r r r
                  r r r}
    \toprule
    & &
    \multicolumn{5}{c}{Dantzig--Wolfe} &
    \multicolumn{6}{c}{Arc--Flow} &
    \multicolumn{3}{c}{DW-to-AF Ratios} \\
    \cmidrule(lr){3-7}
    \cmidrule(lr){8-13}
    \cmidrule(lr){14-16}
     \multicolumn{1}{c}{Feature} &  \multicolumn{1}{c}{Value}
      &  \multicolumn{1}{c}{LB} &  \multicolumn{1}{c}{Cuts} &  \multicolumn{1}{c}{RMP (s)} &  \multicolumn{1}{c}{PP (s)} &  \multicolumn{1}{c}{Time (s)}
      &  \multicolumn{1}{c}{LB} &  \multicolumn{1}{c}{Cuts} &  \multicolumn{1}{c}{RMP (s)} &  \multicolumn{1}{c}{PP (s)} &  \multicolumn{1}{c}{Time (s)} &  \multicolumn{1}{c}{Recomb.}
      &  \multicolumn{1}{c}{Variables} &  \multicolumn{1}{c}{Iterations} &  \multicolumn{1}{c}{Time} \\
    \midrule
    Customer  & 25
      & 305.76 & 10.1 & 0.08 & 1.41 & 1.59
      & 305.76 & 10.6 & 0.35 & 1.33 & 1.90 & 1.43
      & 0.26 & 1.22 & 0.84 \\
    number    & 50
      & 549.97 & 38.3 & 0.85 & 35.23 & 37.36
      & 550.01 & 38.0 & 6.84 & 27.71 & 37.32 & 2.13
      & 0.23 & 1.34 & 1.00 \\
    \midrule
    Customer  & R
      & 540.09 & 37.0 & 0.19 & 6.03 & 6.68
      & 540.09 & 38.6 & 2.04 & 8.50 & 11.78 & 1.06
      & 0.20 & 0.89 & 0.57 \\
    distribution & C
      & 269.62 & 0.3 & 0.34 & 6.65 & 7.29
      & 269.62 & 0.3 & 0.99 & 3.56 & 5.00 & 3.36
      & 0.29 & 1.96 & 1.46 \\
                & RC
      & 473.56 & 35.3 & 0.27 & 8.70 & 9.44
      & 473.61 & 34.0 & 1.85 & 7.36 & 10.12 & 1.49
      & 0.25 & 1.20 & 0.93 \\
    \midrule
    Time      & Tight
      & 424.94 & 25.7 & 0.14 & 2.57 & 2.91
      & 424.97 & 24.8 & 0.61 & 2.41 & 3.40 & 1.20
      & 0.25 & 1.08 & 0.86 \\
    windows   & Loose
      & 395.73 & 22.7 & 0.49 & 19.28 & 20.50
      & 395.73 & 23.8 & 3.95 & 15.26 & 20.86 & 2.54
      & 0.24 & 1.52 & 0.98 \\
    \midrule
    \midrule 
    Relaxation & $\Delta=6$
      & 410.08 & 24.2 & 0.26 & 7.04 & 7.72
      & 410.09 & 24.3 & 1.55 & 6.07 & 8.42 & 1.74
      & 0.24 & 1.28 & 0.92 \\
               & $\Delta=12$
      & 408.25 & 19.1 & 0.22 & 5.98 & 6.53
      & 408.26 & 18.8 & 1.26 & 5.66 & 7.79 & 1.36
      & 0.21 & 1.13 & 0.84 \\
    \bottomrule
  \end{tabular}
  \end{adjustbox}
  \caption{Comparison of DW and AF with cuts by instance features and relaxation strength (Base case: $\Delta = 6$)}
  \label{t_instanceFeaturesCuts}
\end{table}

DW and AF do not always reach the same bound due to (i) solution symmetry, which can lead to different cuts being separated, and (ii) the cut limit being reached. When cuts are fully separated, both yield the same bound, consistent with Proposition~\ref{p_projectionStrengthLambda}. Compared to Table~\ref{t_instanceFeatures}, AF still performs best on clustered instances, where few or no cuts are separated, while DW performs better elsewhere, including cases with loose time windows where AF was previously superior. For $\Delta = 6$, the iteration ratio decreases from 1.32 to 1.28, the time ratio from 1.08 to 0.92, and AF recombination from 1.84 to 1.74. Results for $\Delta = 12$ confirm the trend: stronger relaxations combined with SRCs further weaken AF’s relative performance, with DW being approximately $1.2$ times faster overall.

\subsection{Assessment of Dantzig-Wolfe and Arc-Flow with Iterative Subproblem Strengthening} \label{s_computational_subproblem}

In this set of experiments, we assess DW and AF under decremental state-space relaxation (DSSR) and column elimination (CE). In DSSR, we start with $\Delta = 0$, solve the relaxation, and progressively increase the \emph{ng}-set size whenever cycles appear, until the elementary bound is reached. In CE, we also start with $\Delta = 0$, solve the relaxation, and iteratively remove \emph{conflicting paths} (one cycle per path) from the DAG until elementarity is achieved. Both methods rely on an explicit DAG and a standard shortest-path algorithm, enabling the analysis of subproblem size and avoiding reliance on the dominance rule \eqref{eq_dominance}, which is not readily applicable to CE. Although more efficient implementations exist, our purpose is to show that both DW and AF can employ these methods to reach the same bound, each exhibiting different trade-offs. We consider $\Delta = 0$ and only type 1 instances (i.e., with tight time windows), as the explicit DAG for several type 2 instances did not fit in memory. For reference, Appendix~\ref{A_explicit_vs_implicit} compares the explicit and implicit (labeling) approaches.

Table~\ref{t_DSSR_CE} compares DW and AF with DSSR and CE by instance features. For DW with DSSR, Columns 3--5 report the average number of strengthening iterations, routes eliminated from the RMP due to strengthenings, and CPU time, excluding the compilation time of the initial DAG. Columns 6--8 present the same information for AF with DSSR, with Column 9 additionally reporting the overall change in DAG dimension ($=|\E| + |\N|$), measured as the difference between the final and initial DAGs. This metric is omitted for DW, as it closely follows that of AF. Columns 10--16 report the same information for DW and AF under CE. For the number of strengthenings, eliminated routes, and the change in DAG dimension, we report the arithmetic mean (rather than the geometric mean) due to the presence of negative or zero values (e.g., DSSR can decrease the DAG dimension in specific cases due to forbidden extensions and resource constraints). The results include only instances solved by all approaches. Appendix~\ref{A_iterativeSubproblem} provides detailed results by instance group, showing that under DSSR seven instances were not solved, and under CE four, either because the time limit was exceeded or memory was exhausted. 

\begin{table}[tb]
  \centering
  \begin{adjustbox}{max width=\textwidth}
  \begin{tabular}{l l
                  r r r
                  r r r r
                  r r r
                  r r r r}
    \toprule
    & &
    \multicolumn{3}{c}{DW-DSSR} &
    \multicolumn{4}{c}{AF-DSSR} &
    \multicolumn{3}{c}{DW-CE} &
    \multicolumn{4}{c}{AF-CE} \\
    \cmidrule(lr){3-5}
    \cmidrule(lr){6-9}
    \cmidrule(lr){10-12}
    \cmidrule(lr){13-16}
        \multicolumn{1}{c}{Feature} &  \multicolumn{1}{c}{Value}
      &  \multicolumn{1}{c}{\shortstack{Strength.\\Iter.}} &  \multicolumn{1}{c}{\shortstack{Elim.\\Routes}} &  \multicolumn{1}{c}{Time (s)}
      &  \multicolumn{1}{c}{\shortstack{Strength.\\Iter.}} &  \multicolumn{1}{c}{\shortstack{Elim.\\Arcs}} &  \multicolumn{1}{c}{Time (s)} &  \multicolumn{1}{c}{\shortstack{Change\\in DAG}}
      &  \multicolumn{1}{c}{\shortstack{Strength.\\Iter.}} &  \multicolumn{1}{c}{\shortstack{Elim.\\Routes}} &  \multicolumn{1}{c}{Time (s)}
      &  \multicolumn{1}{c}{\shortstack{Strength.\\Iter.}} &  \multicolumn{1}{c}{\shortstack{Elim.\\Arcs}} &  \multicolumn{1}{c}{Time (s)} &  \multicolumn{1}{c}{\shortstack{Change\\in DAG}} \\

    \midrule
    Customer  & 25
      & 2.2 & 369.2 & 8.78
      & 2.2 & 1,310.7 & 9.25 & 1,221,596.3
      & 10.8 & 231.5 & 5.90
      & 10.2 & 51.5 & 6.48 & 982.7 \\
    number    & 50
      & 3.5 & 1,046.9 & 61.42
      & 3.2 & 4,622.8 & 68.11 & 4,944,384.5
      & 169.1 & 857.3 & 44.97
      & 171.1 & 431.2 & 45.20 & 9,696.1 \\
    \midrule
    Customer  & R
      & 2.8 & 335.8 & 11.48
      & 2.5 & 1,099.4 & 12.61 & 1,003,720.2
      & 14.6 & 236.2 & 6.57
      & 14.1 & 81.3 & 8.25 & 1,511.4 \\
    distribution & C
      & 0.5 & 321.8 & 25.88
      & 0.5 & 1,178.4 & 25.80 & 4,199,578.6
      & 3.1 & 71.5 & 20.77
      & 2.9 & 8.1 & 18.71 & 252.9 \\
                & RC
      & 5.0 & 1,388.9 & 29.85
      & 5.0 & 6,243.1 & 34.18 & 3,421,434.7
      & 232.3 & 1,256.8 & 22.41
      & 234.9 & 586.3 & 23.13 & 13,144.2 \\
    \midrule
    Time & \multirow{2}{*}{Tight}
      & \multirow{2}{*}{2.7} & \multirow{2}{*}{661.5} & \multirow{2}{*}{20.32}
      & \multirow{2}{*}{2.6} & \multirow{2}{*}{2,739.5} & \multirow{2}{*}{21.88} & \multirow{2}{*}{2,827,505.0}
      & \multirow{2}{*}{79.1} & \multirow{2}{*}{501.5} & \multirow{2}{*}{14.17}
      & \multirow{2}{*}{79.6} & \multirow{2}{*}{215.3} & \multirow{2}{*}{14.98} & \multirow{2}{*}{4,741.4} \\
    windows & & & & & & & & & & & & & & & \\
    \bottomrule
    \end{tabular}
    \end{adjustbox}
    \caption{Comparison of DW and AF with DSSR and CE by instance features.} 
    \label{t_DSSR_CE}
\end{table}

From Table~\ref{t_DSSR_CE}, DW and AF exhibit similar performance under both DSSR and CE. The number of strengthenings differs slightly because DW and AF may yield symmetric solutions. For both reformulations, DSSR requires fewer than three strengthenings on average and eliminates over 650 routes from the DW--RMP and 2,700 arcs from the AF--RMP, whereas CE requires about 80 iterations and removes roughly 500 routes from the DW--RMP and 215 arcs from the AF--RMP. Thus, DSSR involves fewer strengthenings but produces a state space about three orders of magnitude larger than that of CE, which maintains a smaller, easier-to-optimize DAG. This is reflected in the running times: DW-DSSR and AF-DSSR take about 21 seconds on average, compared to roughly 14 seconds for DW-CE and AF-CE. Note, however, that the explicit DAG approach affects DSSR more severely due to its global strengthening step, which requires recompiling the DAG at each iteration (explaining why it is typically used with an implicit approach), whereas CE involves only local DAG refinements.

Although general conclusions are difficult to draw, as results are sensitive to implementation choices, CE’s weaker strengthenings can be advantageous in memory-constrained settings (e.g., large instances), where DSSR may quickly exhaust memory, as shown in Appendix~\ref{A_iterativeSubproblem}. Conversely, when the gap between the target bound and the initial relaxation is large, CE’s local refinements may require substantially more iterations, whereas DSSR often converges in only a few, as illustrated in Figure~\ref{fig_combined_R112_C103_small}. In any case, both DW and AF can employ the same strengthening methods.

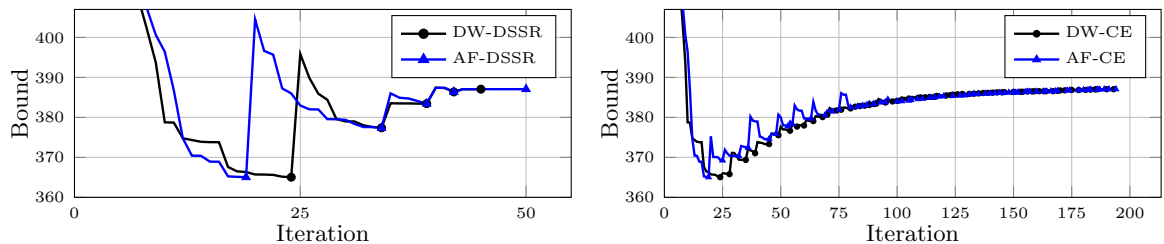
\begin{figure}[tb]
  \centering
  \begin{adjustbox}{max width=\textwidth, max totalheight=0.48\textheight, center}
      \centering

\begin{subfigure}[b]{0.40\textwidth}
  \pgfplotstableread[col sep=tab]{four-series-data-R112.txt}\datatableR
  \centering
  \begin{tikzpicture}
    \begin{axis}[
      scale only axis,
      width=\linewidth,
      height=2.5cm,
      font=\scriptsize,
      xlabel style={yshift=1ex},
      ylabel style={yshift=-1ex},
      tick label style={font=\tiny},
      legend style={font=\tiny, at={(0.98,0.02)}, anchor=south east},
      legend cell align=left,
      xlabel={Iteration},
      ylabel={Bound},
      legend pos=north east,
      grid=major,
      scaled y ticks=false,
      ymin=360, ymax=407,
      xmin=0,
      xtick distance=25,
      ytick distance=10,
    ]
\addplot[
  black, solid, mark=none, line width=0.9pt,
  legend image post style={mark=*, mark size=1.5pt}
] table[x=iter, y={DW-DSSR}] \datatableR;
\addplot[only marks, mark=*, mark size=1.5pt, color=black, forget plot]
  table[x=iter, y={DW-DSSR-inc}] \datatableR;
\addlegendentry{DW-DSSR}

\addplot[
  blue, solid, mark=none, line width=0.9pt,
  legend image post style={mark=triangle*, mark size=1.8pt}
] table[x=iter, y={AF-DSSR}] \datatableR;
\addplot[only marks, mark=triangle*, mark size=1.8pt, color=blue, forget plot]
  table[x=iter, y={AF-DSSR-inc}] \datatableR;
\addlegendentry{AF-DSSR}
    \end{axis}
  \end{tikzpicture}
\end{subfigure}%
\hspace{0.075\textwidth}%
\begin{subfigure}[b]{0.40\textwidth}
  \pgfplotstableread[col sep=tab]{four-series-data-R112.txt}\datatableRce
  \centering
  \begin{tikzpicture}
    \begin{axis}[
      scale only axis,
      width=\linewidth,
      height=2.5cm,
      font=\scriptsize,
      xlabel style={yshift=1ex},
      ylabel style={yshift=-1ex},
      tick label style={font=\tiny},
      legend style={font=\tiny, at={(0.98,0.02)}, anchor=south east},
      legend cell align=left,
      legend pos=north east,
      xlabel={Iteration},
      ylabel={Bound},
      grid=major,
      scaled y ticks=false,
      ymin=360, ymax=407,
      xmin=0,
      xtick distance=25,
      ytick distance=10,
    ]

      \addplot[
        black, solid, mark=none, line width=0.9pt,
        legend image post style={mark=*, mark size=1pt}
      ] table[x=iter, y={DW-CE}] \datatableRce;
      \addlegendentry{DW-CE}
      \addplot[only marks, mark=*, mark size=1pt, color=black, forget plot]
        table[x=iter, y={DW-CE-inc}] \datatableRce;

      \addplot[
        blue, solid, mark=none, line width=0.9pt,
        legend image post style={mark=triangle*, mark size=1.2pt}
      ] table[x=iter, y={AF-CE}] \datatableRce;
      \addlegendentry{AF-CE}
      \addplot[only marks, mark=triangle*, mark size=1.2pt, color=blue, forget plot]
        table[x=iter, y={AF-CE-inc}] \datatableRce;

    \end{axis}
    \end{tikzpicture}
    \end{subfigure}
    \end{adjustbox}
    \caption{Bound progression over iterations for DW and AF with DSSR and CE on \texttt{R112-25}.}
    \label{fig_combined_R112_C103_small}
\end{figure}

\section{Conclusion} \label{s_conclusions}

This paper presented a unified framework for comparing Dantzig-Wolfe and Arc-Flow reformulations of integer programs. We formalized several relationships that had previously appeared in the literature, either as formal claims or (informal) observations, establishing precise correspondences between variables, feasible sets, and relaxations in both formulations.  We showed that Dantzig-Wolfe and Arc-Flow models yield identical LP bounds, and that valid inequalities, whether defined in the original, DW, or AF spaces, preserve their strength when appropriately translated. We also demonstrated that iterative subproblem-strengthening methods--such as decremental state-space relaxation and column elimination--can be viewed as sequences of subproblem-level cutting planes, clarifying their theoretical foundation.

Computational experiments on the vehicle routing problem with time windows provide empirical support for the theoretical insights developed in this work. In this case study, both reformulations achieve the same bounds, but their computational behavior differs in a structured way. AF tends to perform better when subproblems are highly relaxed or low-dimensional, where path recombination can accelerate convergence. In contrast, DW shows advantages when the master problem remains compact, benefiting from smaller LPs and faster iterations.

Although these empirical results are specific to the VRPTW instances considered, they illustrate how structural properties (e.g., relaxation strength and state-space size) may influence the relative performance of the two reformulations. Overall, the theoretical results provide a formal basis for comparing these reformulations, while the computational study offers problem-specific guidance for selecting between them, potentially extending to closely related decomposition-based integer optimization settings. Further computational experiments across different problem classes are warranted to assess the broader applicability of these findings.


\appendix

\section{Block-Diagonal Structure} \label{s_blockDiagonal}
In many applications,~\eqref{eq_IP} has the following structure: \[
\max\left\{\vb c \vb x \,: \, \vb A \vb x \leq \vb b, \, \vb D^{k} \vb x^{k} \leq \vb d^{k} \, \, \, k=1,\ldots K, \, \vb x \in \Z_{+}^{n} \right\},
\]
and the set $X$ decomposes into $K$ independent subproblems $X^{k} := \{\vb x \in \Z_{+}^{n_{k}} \, : \, \vb D^{k} \vb x \leq \vb d^{k}\}$. By reformulating each $X^{k}$ as in~\eqref{eq_DWlink}, the DW reformulation follows:
\[
\max \left\{ \sum_{k=1}^{K} \sum_{q \in Q^{k}} ( \vb c^{k} \vb x^{q,k}) \lambda_{q}^{k} \;:\; 
\sum_{k=1}^{K} \sum_{q \in Q^{k}} ( \vb A^{k} \vb x^{q,k}) \lambda_{q}^{k} \leq \vb b,\;
\sum_{q \in Q^{k}} \lambda_{q}^{k} = 1 \;\; \forall k,\;
\boldsymbol{\lambda}^{k} \in \{0,1\}^{|Q^{k}|} \;\; \forall k
\right\},
\]

\noindent where $\vb c^{k}$ and $\vb A^{k}$ are the corresponding blocks in $\vb c$ and $\vb A$, and $\{\vb x^{q,k}\}_{q \in Q^{k}}$ are the integer points  in $X^{k}$. When blocks are identical, the variables $\lambda_{q}^{k}$ across different $k$ correspond to the same integer point $q \in Q$ and can be \emph{aggregated} into a single variable $\lambda_{q} := \sum_{k=1}^{K} \lambda_{q}^{k}$, yielding an aggregated convexity constraint: $\sum_{q \in Q}\lambda_{q} = K$. 
For AF, a DAG $\D^{k}=(\N^{k}, \E^{k})$ satisfying Definition~\ref{d_arcflowRefor}(i)-(ii) with respect to $X^{k}$ must be defined for each block $k=1,\ldots, K$. The AF reformulation follows:
\[
\max \; \left\{ \sum_{k=1}^{K} (\vb c^{k}\vb T^{k}) \vb y^{k} \; : \; 
   \sum_{k=1}^{K} (\vb A^{k}\vb T^{k}) \vb y^{k} \leq \vb b, 
   \; \vb F^{k} \vb y^{k} = \vb f^{k} \;\; \forall k,\;
   \vb y^{k} \in \{0,1\}^{|\E^{k}|} \;\; \forall k \right\},
\]
\noindent where $\vb F^{k}\vb y^{k} = \vb f^{k}$ denotes the flow constraints \eqref{eq_AF2}--\eqref{eq_AF3} on $\D^{k}$. In the case of identical blocks, we have $K$ identical DAGs that can be aggregated into a single one $\D = (\N, \E)$. Accordingly, for each arc $e \in \E$, the variables $y_{e}^{k}$ aggregate into a single variable $y_{e} := \sum_{k=1}^{K} y_e^k$, leading to aggregated flow constraints: $  \sum_{e \in \delta^{+}(r)} y_{e} = K$ and $ \sum_{e \in \delta^{+}(u)}y_{e} - \sum_{e \in \delta^{-}(u)} y_{e} = 0$ for all $u \in \N\setminus\{r,t\}$.

\section{Additional Computations for the Illustrative Example}

\subsection{Projection Matrix, Cost, and Constraint Coefficients for the VRPTW Arc-Flow Reformulation} \label{A_example_computation}

Table \ref{t_computations} shows the projection matrix $\vb T$ and the computations $(\vb c \vb T)$ and $(\vb A \vb T)$ for the illustrative example in Figure \ref{f_example}. 

\begin{table}[htbp]
  \centering
  \begin{adjustbox}{max width=\textwidth}
  \begin{tabular}{r l *{21}{c}}
    \toprule
    & & 
    $y_{1}$ & $y_{2}$ & $y_{3}$ & $y_{4}$ & $y_{5}$ & $y_{6}$ & $y_{7}$ & $y_{8}$ & $y_{9}$ 
    & $y_{10}$ & $y_{11}$ & $y_{12}$ & $y_{13}$ & $y_{14}$ & $y_{15}$ & $y_{16}$ & $y_{17}$ & $y_{18}$ & $y_{19}$ & $y_{20}$ & $y_{21}$  \\
    \midrule
    \multirow{11}{*}{$\vb T:= \;\;\;\;$}
      & $x_{\underline{0},\overline{0}}$ & 0 & 0 & 0 & 0 & 0 & 0 & 0 & 0 & 0 & 0 & 0 & 1 & 0 & 0 & 0 & 0 & 0 & 0 & 0 & 0 & 0\\
      & $x_{\underline{0},1}$            & 1 & 0 & 0 & 0 & 0 & 0 & 0 & 0 & 0 & 0 & 0 & 0 & 0 & 0 & 0 & 0 & 0 & 0 & 0 & 0 & 0\\
      & $x_{\underline{0},2}$            & 0 & 1 & 0 & 0 & 0 & 0 & 0 & 0 & 0 & 0 & 0 & 0 & 0 & 0 & 0 & 0 & 0 & 0 & 0 & 0 & 0\\
      & $x_{\underline{0},3}$            & 0 & 0 & 1 & 0 & 0 & 0 & 0 & 0 & 0 & 0 & 0 & 0 & 0 & 0 & 0 & 0 & 0 & 0 & 0 & 0 & 0\\
      & $x_{1,2}$                        & 0 & 0 & 0 & 1 & 0 & 0 & 0 & 1 & 0 & 0 & 0 & 0 & 0 & 0 & 0 & 0 & 0 & 0 & 0 & 0 & 0\\
      & $x_{2,1}$                        & 0 & 0 & 0 & 0 & 1 & 0 & 0 & 0 & 1 & 0 & 0 & 0 & 0 & 0 & 0 & 0 & 0 & 0 & 0 & 0 & 0\\
      & $x_{2,3}$                        & 0 & 0 & 0 & 0 & 0 & 1 & 0 & 0 & 0 & 1 & 0 & 0 & 0 & 0 & 0 & 0 & 0 & 0 & 0 & 0 & 0\\
      & $x_{3,2}$                        & 0 & 0 & 0 & 0 & 0 & 0 & 1 & 0 & 0 & 0 & 1 & 0 & 0 & 0 & 0 & 0 & 0 & 0 & 0 & 0 & 0\\
      & $x_{1,\overline{0}}$             & 0 & 0 & 0 & 0 & 0 & 0 & 0 & 0 & 0 & 0 & 0 & 0 & 1 & 0 & 0 & 1 & 0 & 0 & 1 & 0 & 0\\
      & $x_{2,\overline{0}}$             & 0 & 0 & 0 & 0 & 0 & 0 & 0 & 0 & 0 & 0 & 0 & 0 & 0 & 1 & 0 & 0 & 1 & 0 & 0 & 1 & 0\\
      & $x_{3,\overline{0}}$             & 0 & 0 & 0 & 0 & 0 & 0 & 0 & 0 & 0 & 0 & 0 & 0 & 0 & 0 & 1 & 0 & 0 & 1 & 0 & 0 & 1\\
    \midrule
    $(\vb c \vb T)=$  
      & $(c_{e})_{e \in \E}$             & 5 & 10 & 5 & 10 & 10 & 10 & 10 & 10 & 10 
                                          & 10 & 10 & 0 & 5 & 10 & 5 & 5 & 10 & 5 & 5 & 10 & 5 \\
    \midrule
    \multirow{3}{*}{$(\vb A\vb T)=$}
      &                                  & 0 & 0 & 0 & 1 & 0 & 0 & 0 & 1 & 0 & 0 & 0 & 0 & 1 & 0 & 0 & 1 & 0 & 0 & 1 & 0 & 0 \\
      & $(z_{i})_{i\in N} =$             & 0 & 0 & 0 & 0 & 1 & 1 & 0 & 0 & 1 & 1 & 0 & 0 & 0 & 1 & 0 & 0 & 1 & 0 & 0 & 1 & 0 \\
      &                                  & 0 & 0 & 0 & 0 & 0 & 0 & 1 & 0 & 0 & 0 & 1 & 0 & 0 & 0 & 1 & 0 & 0 & 1 & 0 & 0 & 1 \\
    \bottomrule
    \end{tabular}
    \end{adjustbox}
    \caption{Projection matrix $\vb T$ and computations $(\vb c\vb T)$ and $(\vb A\vb T)$ for the illustrative example.}
    \label{t_computations}
\end{table}

\subsection{CG-Rank 1 Cut in the DW and AF Spaces} \label{A_CGRank1Cut}

Figure~\ref{f_lambdaCut} shows a CG rank-1 cut with multipliers $\vb u = (1/2, 1/2, 0)$ in the DW and AF spaces in our illustrative example (Figure \ref{f_example}). These cuts are: $\sum_{k=1}^{K} (\lambda_{4}^{k} + \lambda_{5}^{k} + \lambda_{8}^{k} + \lambda_{9}^{k} + \lambda_{10}^{k}+\lambda_{11}^{k} + \lambda_{12}^{k}) \leq 1$  and $\sum_{k=1}^{K} (y^{k}_{4} + y^{k}_{5} + y^{k}_{10} + y^{k}_{13}) \leq 1$. 

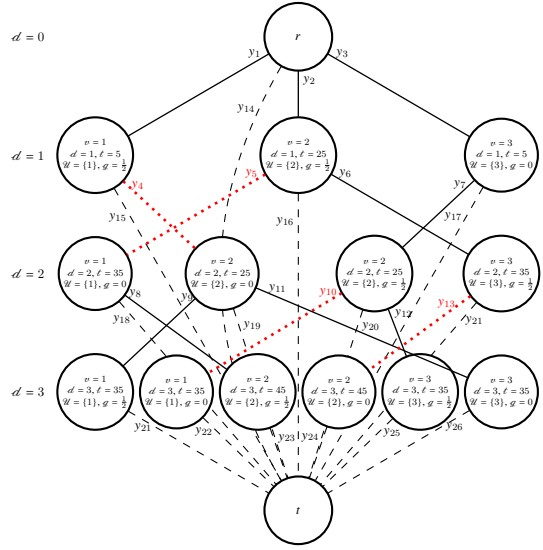
\begin{figure}[ht]
  \centering

  \begin{subfigure}[b]{0.40\textwidth}
   \centering
    \scriptsize
    \renewcommand{\arraystretch}{1.1}
    \begin{tabular}{c l c}
      \toprule
      $q$ & Route & $\gamma_{q}=\left\lfloor \tfrac12\,(z_{1}^{q}+z_{2}^{q}) \right\rfloor$ \\
      \midrule
       1  & $0\to1\to0$            & 0 \\
       2  & $0\to2\to0$            & 0 \\
       3  & $0\to3\to0$            & 0 \\
      \midrule
      \rowcolor{red!20}\textbf{4}  & \textbf{$0\to1\to2\to0$}        & \textbf{1} \\
      \rowcolor{red!20}\textbf{5}  & \textbf{$0\to2\to1\to0$}        & \textbf{1} \\
       6  & $0\to2\to3\to0$        & 0 \\
       7  & $0\to3\to2\to0$        & 0 \\
      \midrule
      \rowcolor{red!20}\textbf{8}  & \textbf{$0\to1\to2\to1\to0$}    & \textbf{1} \\
      \rowcolor{red!20}\textbf{9}  & \textbf{$0\to1\to2\to3\to0$}    & \textbf{1} \\
      \rowcolor{red!20}\textbf{10} & \textbf{$0\to2\to1\to2\to0$}    & \textbf{1} \\
      \rowcolor{red!20} \textbf{11} & \textbf{$0\to2\to3\to2\to0$}     & \textbf{1} \\
      \rowcolor{red!20}\textbf{12} & \textbf{$0\to3\to2\to1\to0$}    & \textbf{1} \\
      13 & $0\to3\to2\to3\to0$     & 0 \\
      \bottomrule
    \end{tabular}
    \caption{\small Cut in the DW space.}
\end{subfigure}%
  \hfill
  \begin{subfigure}[b]{0.60\textwidth}
    \centering
    \begin{tikzpicture}[
        scale=0.45,
        every node/.style={transform shape},
        >=Stealth,
        thick,
        node distance=2.2cm and 3.0cm,
        on grid, auto,
        myNode/.style={
          circle, draw, fill=white,
          minimum size=20mm, inner sep=1pt,
          font=\scriptsize, align=center
        }
      ]
      \pgfdeclarelayer{background}
      \pgfsetlayers{background,main}

      \node (layer0) at (-8,   0) {$\mathscr{d}=0$};
      \node (layer1) at (-8, -3.5) {$\mathscr{d}=1$};
      \node (layer2) at (-8, -7.0) {$\mathscr{d}=2$};
      \node (layer3) at (-8,-10.5) {$\mathscr{d}=3$};

      \node[myNode] (R) at (0,0) {\normalsize $r$};

      \node[myNode] (A) at (-6,-3.5) {
        $\mathscr{v}=1$\\$\mathscr{d}=1,\mathscr{t}=5$\\$\mathscr{U}=\{1\},\mathscr{g}=\frac{1}{2}$
      };
      \node[myNode] (B) at (0,-3.5) {
        $\mathscr{v}=2$\\$\mathscr{d}=1,\mathscr{t}=25$ \\$\mathscr{U}=\{2\}, \mathscr{g}=\frac{1}{2}$
      };
      \node[myNode] (C) at (6,-3.5) {
        $\mathscr{v}=3$\\$\mathscr{d}=1,\mathscr{t}=5$ \\$\mathscr{U}=\{3\}, \mathscr{g}=0$
      };

      \node[myNode] (M) at (-2.25,-7.0) {
        $\mathscr{v}=2$\\$\mathscr{d}=2,\mathscr{t}=25$ \\ $\mathscr{U}=\{2\}, \mathscr{g}=0$
      };

      \node[myNode] (Mnew) at (2.25,-7.0) {
        $\mathscr{v}=2$\\$\mathscr{d}=2,\mathscr{t}=25$ \\ $\mathscr{U}=\{2\}, \mathscr{g}=\frac{1}{2}$
      };
      
      \node[myNode] (F) at (-6,-7.0) {
        $\mathscr{v}=1$\\$\mathscr{d}=2,\mathscr{t}=35$ \\$\mathscr{U}=\{1\}, \mathscr{g}=0$
      };
      \node[myNode] (G) at (6,-7.0) {
        $\mathscr{v}=3$ \\$\mathscr{d}=2,\mathscr{t}=35$ \\ $\mathscr{U}=\{3\}, \mathscr{g}=\frac{1}{2}$
      };

      \node[myNode] (Z) at (-6,-10.5) {
        $\mathscr{v}=1$ \\$\mathscr{d}=3,\mathscr{t}=35$ \\ $\mathscr{U}=\{1\}, \mathscr{g}=\frac{1}{2}$
      };

     \node[myNode] (W) at (-3.6,-10.5) {
        $\mathscr{v}=1$ \\$\mathscr{d}=3,\mathscr{t}=35$ \\ $\mathscr{U}=\{1\}, \mathscr{g}=0$
      };

      \node[myNode] (X) at (-1.2,-10.5) {
        $\mathscr{v}=2$ \\$\mathscr{d}=3,\mathscr{t}=45$ \\ $\mathscr{U}=\{2\}, \mathscr{g}=\frac{1}{2}$
      };

      \node[myNode] (Y) at (1.2,-10.5) {
        $\mathscr{v}=2$ \\$\mathscr{d}=3,\mathscr{t}=45$ \\ $\mathscr{U}=\{2\}, \mathscr{g}=0$
      };

        \node[myNode] (S) at (3.6,-10.5) {
        $\mathscr{v}=3$\\$\mathscr{d}=3,\mathscr{t}=35$ \\ $\mathscr{U}=\{3\}, \mathscr{g}=\frac{1}{2}$
      };
      
      \node[myNode] (U) at (6,-10.5) {
        $\mathscr{v}=3$\\$\mathscr{d}=3,\mathscr{t}=35$ \\ $\mathscr{U}=\{3\}, \mathscr{g}=0$
      };

      \node[myNode] (T) at (0,-14) {\normalsize $t$};

      \draw[line width=0.5pt] (R) -- node[pos=0.1,above] {$y_{1}$} (A);
      \draw[line width=0.5pt] (R) -- node[pos=0.2,right] {$y_{2}$} (B);
      \draw[line width=0.5pt] (R) -- node[pos=0.1,above] {$y_{3}$} (C);
      \draw[red, line width=1pt, dotted] (A) -- node[pos=0.2,above] {$y_{4}$} (M);
      \draw[red, line width=1pt, dotted] (B) -- node[pos=0.1,above] {$y_{5}$} (F);
      \draw[line width=0.5pt] (B) -- node[pos=0.1,above] {$y_{6}$} (G);
      \draw[line width=0.5pt] (C) -- node[pos=0.2,above] {$y_{7}$} (Mnew);
      \draw[line width=0.5pt] (F) -- node[pos=0.1,above]  {$y_{8}$} (X);
      \draw[line width=0.5pt] (M) -- node[pos=0.11,above]  {$y_{9}$} (Z);
      \draw[red, line width=1pt, dotted] (Mnew) -- node[pos=0.1,above]  {$y_{10}$} (W);
      \draw[line width=0.5pt] (M) -- node[pos=0.1,above]  {$y_{11}$} (U);
      \draw[line width=0.5pt] (Mnew) -- node[pos=0.1,right]  {$y_{12}$} (S);
      \draw[red, line width=1pt, dotted] (G) -- node[pos=0.1,left]  {$y_{13}$} (Y);

      \begin{pgfonlayer}{background}
        \draw[dashed, bend right=30] (R)  to node[pos=0.1,left]  {$y_{14}$} (T);
        \draw[dashed] (A)  to node[pos=0.1,left]  {$y_{15}$} (T);
        \draw[dashed] (B)  to node[pos=0.1,left]  {$y_{16}$} (T);
        \draw[dashed] (C)  to node[pos=0.1,left]  {$y_{17}$} (T);

        \draw[dashed] (M)  to node[pos=0.1,right]  {$y_{19}$} (T);
        \draw[dashed] (Mnew)  to node[pos=0.1,right]  {$y_{20}$} (T);
        \draw[dashed] (F)  to node[pos=0.1,left]  {$y_{18}$} (T);
        \draw[dashed] (G)  to node[pos=0.1,right]  {$y_{21}$} (T);

         \draw[dashed] (Z)  to node[pos=0.2,left]  {$y_{21}$} (T);
         \draw[dashed] (W)  to node[pos=0.2,left]  {$y_{22}$} (T);
         \draw[dashed] (X)  to node[pos=0.2,right]  {$y_{23}$} (T);
         \draw[dashed] (Y)  to node[pos=0.2,left]  {$y_{24}$} (T);
         \draw[dashed] (S)  to node[pos=0.2,right]  {$y_{25}$} (T);
         \draw[dashed] (U)  to node[pos=0.2,right]  {$y_{26}$} (T);
      \end{pgfonlayer}

    \end{tikzpicture}
    \caption{\small Cut in the AF space.}
  \end{subfigure}

  \caption{CG rank-1 cut with $\vb u = (\frac{1}{2}, \frac{1}{2}, 0)$. Variables in the cut are shown in bold red and dotted lines.}
  \label{f_lambdaCut}
\end{figure}

\subsection{Decremental State-Space Relaxation and Column Elimination} \label{A_subproblemRelax}

The \emph{ng}-relaxation is a state-space relaxation. Figure~\ref{f_subproblemRelax} illustrates its strengthening in our illustrative example (Figure~\ref{f_example}). Decremental state-space relaxation forbids cycles of the form $1 \rightarrow 2 \rightarrow 1$ globally by adding customer 1 to the \emph{ng}-set of customer 2, yielding the DAG in Figure~\ref{f_subproblemRelax}(a) with three additional arcs and two additional nodes. As Figure~\ref{f_subproblemRelax}(c) illustrates, column elimination instead locally removes the conflicting path $\underline{0}\rightarrow1\rightarrow2\rightarrow1\rightarrow\overline{0}$ by adding only two arcs and one node.

\begin{figure}[htbp]
  \centering

   \begin{subfigure}[b]{0.48\textwidth}
    \centering
     \begin{tikzpicture}[
         scale=0.45,
        every node/.style={transform shape},
        >=Stealth,
        thick,
        node distance=2.2cm and 3.0cm,
        on grid,
        auto,
        myNode/.style={
          circle, draw,
          fill=white,
          minimum size=22mm,
          inner sep=1pt,
          font=\footnotesize,
          align=center        
        },
        every edge/.style={->},
        every label/.append style={midway,font=\tiny},
      ]
      \pgfdeclarelayer{background}
      \pgfsetlayers{background,main}

      \node (layer0) at (-8,   0) {$\mathscr{d}=0$};
      \node (layer1) at (-8, -3.5) {$\mathscr{d}=1$};
      \node (layer2) at (-8, -7.0) {$\mathscr{d}=2$};
      \node (layer3) at (-8,-10.5) {$\mathscr{d}=3$};

      \node[myNode] (R) at (0,0) {\normalsize $r$};

      \node[myNode] (A) at (-6,-3.5) {
        $\mathscr{v}=1$\\$\mathscr{d}=1,\mathscr{t}=5$\\$\mathscr{U}=\{1\}$
      };
      \node[myNode] (B) at (0,-3.5) {
        $\mathscr{v}=2$\\$\mathscr{d}=1,\mathscr{t}=25$ \\$\mathscr{U}=\{2\}$
      };
      \node[myNode] (C) at (6,-3.5) {
        $\mathscr{v}=3$\\$\mathscr{d}=1,\mathscr{t}=5$ \\$\mathscr{U}=\{3\}$
      };

      \node[myNode, fill=red!20] (M) at (-3,-7.0) {
        $\mathscr{v}=2$\\$\mathscr{d}=2,\mathscr{t}=25$ \\ $\mathscr{U}=\{1,2\}$
      };

      \node[myNode] (Mnew) at (0,-7.0) {
        $\mathscr{v}=2$\\$\mathscr{d}=2,\mathscr{t}=25$ \\ $\mathscr{U}=\{2\}$
      };

      \node[myNode] (F) at (-6,-7.0) {
        $\mathscr{v}=1$\\$\mathscr{d}=2,\mathscr{t}=35$ \\$\mathscr{U}=\{1\}$
      };
      \node[myNode] (G) at (6,-7.0) {
        $\mathscr{v}=3$ \\$\mathscr{d}=2,\mathscr{t}=35$ \\ $\mathscr{U}=\{3\}$
      };

      \node[myNode] (I) at (-6,-10.5) {
        $\mathscr{v}=1$ \\$\mathscr{d}=3,\mathscr{t}=35$ \\ $\mathscr{U}=\{1\}$
      };

     \node[myNode] (I2) at (0,-10.5) {
        $\mathscr{v}=2$ \\$\mathscr{d}=3,\mathscr{t}=45$ \\ $\mathscr{U}=\{2\}$
      };

        \node[myNode, fill=red!20] (I2new) at (-3,-10.5) {
        $\mathscr{v}=2$ \\$\mathscr{d}=3,\mathscr{t}=45$ \\ $\mathscr{U}=\{1,2\}$
      };
      
      \node[myNode] (J) at (6,-10.5) {
        $\mathscr{v}=3$\\$\mathscr{d}=3,\mathscr{t}=35$ \\ $\mathscr{U}=\{3\}$
      };

      \node[myNode] (T) at (0,-14) {\normalsize $t$};

      \draw[line width=0.5pt] (R) -- node[pos=0.1, above] {$y_{1}$} (A);
      \draw[line width=0.5pt] (R) -- node[pos=0.2, right] {$y_{2}$} (B);
      \draw[line width=0.5pt] (R) -- node[pos=0.1, above] {$y_{3}$} (C);
      \draw[line width=0.5pt] (A) -- node[pos=0.2, above] {$y_{4}$} (M);
      \draw[line width=0.5pt] (B) -- node[pos=0.1, above] {$y_{5}$} (F);
      \draw[line width=0.5pt] (B) -- node[pos=0.1, above] {$y_{6}$} (G);
      \draw[line width=0.5pt] (C) -- node[pos=0.1, above] {$y_{7}$} (Mnew);
      \draw[line width=0.5pt] (F) -- node[pos=0.2, right] {$y_{8}$} (I2new);
      \draw[line width=0.5pt] (Mnew) -- node[pos=0.25, above] {$y_{9}$} (I.east);
      \draw[line width=0.5pt] (Mnew) -- node[pos=0.1, above] {$y_{10}$} (J);
      \draw[red, line width=0.5pt] (M) -- node[pos=0.08, above] {$y_{22}$} (J.west);
      \draw[line width=0.5pt] (G) -- node[pos=0.1, above] {$y_{11}$} (I2);

      \begin{pgfonlayer}{background}
        \draw[dashed, line width=0.1pt, bend right=30] (R) to node[pos=0.1, left] {$y_{12}$} (T);
        \draw[dashed, line width=0.1pt] (A) -- node[pos=0.1, left] {$y_{13}$} (T);
        \draw[dashed, line width=0.1pt, bend left=30] (B) to node[pos=0.1, right] {$y_{14}$} (T);
        \draw[dashed, line width=0.1pt] (C) -- node[pos=0.1, right] {$y_{15}$} (T);
        \draw[dashed, line width=0.1pt, bend right=30] (Mnew) to node[pos=0.2, left] {$y_{17}$} (T);
        \draw[dashed, line width=0.1pt, bend right=20] (F) to node[pos=0.1, left] {$y_{16}$} (T);
        \draw[dashed, line width=0.1pt] (G) -- node[pos=0.1, right] {$y_{18}$} (T);
        \draw[dashed, line width=0.1pt] (I) -- node[pos=0.2, left] {$y_{19}$} (T);
        \draw[dashed, line width=0.1pt] (I2) -- node[pos=0.2, right] {$y_{20}$} (T);
        \draw[dashed, line width=0.1pt] (J) -- node[pos=0.2, right] {$y_{21}$} (T);

        \draw[dashed, red, line width=0.1pt] (M) to node[pos=0.1, left] {$y_{23}$} (T);
        \draw[dashed, red, line width=0.1pt] (I2new) to node[pos=0.25, left] {$y_{24}$} (T);
      \end{pgfonlayer}

    \end{tikzpicture}

    \caption{\small Decremental state-space relaxation.}
  \end{subfigure}
  \hfill%
  \begin{subfigure}[b]{0.48\textwidth}
    \centering
   \begin{tikzpicture}[
        scale=0.45,
        every node/.style={transform shape},
        >=Stealth,
        thick,
        node distance=2.2cm and 3.0cm,
        on grid,
        auto,
        myNode/.style={
          circle, draw,
          fill=white,
          minimum size=22mm,
          inner sep=1pt,
          font=\footnotesize,
          align=center        
        },
        every edge/.style={->},
        every label/.append style={midway,font=\tiny},
      ]
      \pgfdeclarelayer{background}
      \pgfsetlayers{background,main}

      \node (layer0) at (-8,   0) {$\mathscr{d}=0$};
      \node (layer1) at (-8, -3.5) {$\mathscr{d}=1$};
      \node (layer2) at (-8, -7.0) {$\mathscr{d}=2$};
      \node (layer3) at (-8,-10.5) {$\mathscr{d}=3$};

      \node[myNode] (R) at (0,0) {\normalsize $r$};

      \node[myNode] (A) at (-6,-3.5) {
        $\mathscr{v}=1$\\$\mathscr{d}=1,\mathscr{t}=5$\\$\mathscr{U}=\{1\}$
      };
      \node[myNode] (B) at (0,-3.5) {
        $\mathscr{v}=2$\\$\mathscr{d}=1,\mathscr{t}=25$ \\$\mathscr{U}=\{2\}$
      };
      \node[myNode] (C) at (6,-3.5) {
        $\mathscr{v}=3$\\$\mathscr{d}=1,\mathscr{t}=5$ \\$\mathscr{U}=\{3\}$
      };

      \node[myNode, fill=red!20] (M) at (-3,-7.0) {
        $\mathscr{v}=2$\\$\mathscr{d}=2,\mathscr{t}=25$ \\ $\mathscr{U}=\{1,2\}$
      };

      \node[myNode] (Mnew) at (0,-7.0) {
        $\mathscr{v}=2$\\$\mathscr{d}=2,\mathscr{t}=25$ \\ $\mathscr{U}=\{2\}$
      };

      \node[myNode] (F) at (-6,-7.0) {
        $\mathscr{v}=1$\\$\mathscr{d}=2,\mathscr{t}=35$ \\$\mathscr{U}=\{1\}$
      };
      \node[myNode] (G) at (6,-7.0) {
        $\mathscr{v}=3$ \\$\mathscr{d}=2,\mathscr{t}=35$ \\ $\mathscr{U}=\{3\}$
      };

      \node[myNode] (I) at (-6,-10.5) {
        $\mathscr{v}=1$ \\$\mathscr{d}=3,\mathscr{t}=35$ \\ $\mathscr{U}=\{1\}$
      };

     \node[myNode] (I2) at (0,-10.5) {
        $\mathscr{v}=2$ \\$\mathscr{d}=3,\mathscr{t}=45$ \\ $\mathscr{U}=\{2\}$
      };
      
      \node[myNode] (J) at (6,-10.5) {
        $\mathscr{v}=3$\\$\mathscr{d}=3,\mathscr{t}=35$ \\ $\mathscr{U}=\{3\}$
      };

      \node[myNode] (T) at (0,-14) {\normalsize $t$};

      \draw[line width=0.5pt] (R) -- node[pos=0.1, above] {$y_{1}$} (A);
      \draw[line width=0.5pt] (R) -- node[pos=0.2, right] {$y_{2}$} (B);
      \draw[line width=0.5pt] (R) -- node[pos=0.1, above] {$y_{3}$} (C);
      \draw[line width=0.5pt] (A) -- node[pos=0.2, above] {$y_{4}$} (M);
      \draw[line width=0.5pt] (B) -- node[pos=0.1, above] {$y_{5}$} (F);
      \draw[line width=0.5pt] (B) -- node[pos=0.1, above] {$y_{6}$} (G);
      \draw[line width=0.5pt] (C) -- node[pos=0.1, above] {$y_{7}$} (Mnew);
      \draw[line width=0.5pt] (F) -- node[pos=0.1, above] {$y_{8}$} (I2);
      \draw[line width=0.5pt] (Mnew) -- node[pos=0.25, above] {$y_{9}$} (I.east);
      \draw[line width=0.5pt] (Mnew) -- node[pos=0.1, above] {$y_{10}$} (J);
      \draw[red, line width=0.5pt] (M) -- node[pos=0.08, above] {$y_{22}$} (J.west);
      \draw[line width=0.5pt] (G) -- node[pos=0.1, above] {$y_{11}$} (I2);

      \begin{pgfonlayer}{background}
        \draw[dashed, line width=0.1pt, bend right=30] (R) to node[pos=0.1, left] {$y_{12}$} (T);
        \draw[dashed, line width=0.1pt] (A) -- node[pos=0.1, left] {$y_{13}$} (T);
        \draw[dashed, line width=0.1pt, bend left=30] (B) to node[pos=0.1, right] {$y_{14}$} (T);
        \draw[dashed, line width=0.1pt] (C) -- node[pos=0.1, right] {$y_{15}$} (T);
        \draw[dashed, line width=0.1pt, bend right=30] (Mnew) to node[pos=0.2, left] {$y_{17}$} (T);
        \draw[dashed, line width=0.1pt] (F) -- node[pos=0.1, left] {$y_{16}$} (T);
        \draw[dashed, line width=0.1pt] (G) -- node[pos=0.1, right] {$y_{18}$} (T);
        \draw[dashed, line width=0.1pt] (I) -- node[pos=0.2, left] {$y_{19}$} (T);
        \draw[dashed, line width=0.1pt] (I2) -- node[pos=0.2, right] {$y_{20}$} (T);
        \draw[dashed, line width=0.1pt] (J) -- node[pos=0.2, right] {$y_{21}$} (T);

        \draw[dashed, red, line width=0.1pt] (M) to node[pos=0.1, left] {$y_{23}$} (T);
      \end{pgfonlayer}

    \end{tikzpicture}

    \caption{\small Column elimination. }
  \end{subfigure}

  \caption{Strengthening subproblem relaxations.}
  \label{f_subproblemRelax}
\end{figure}
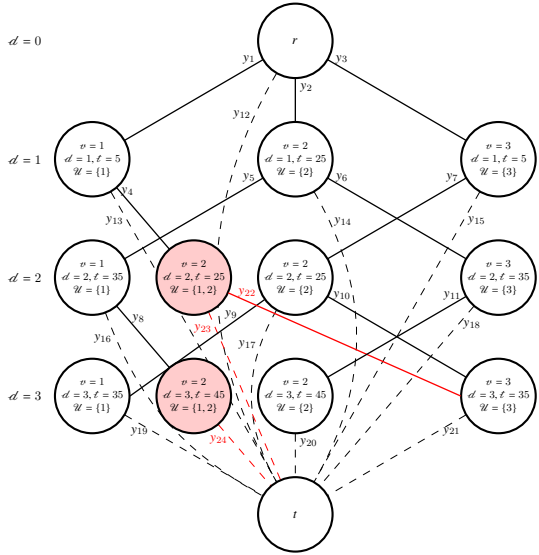
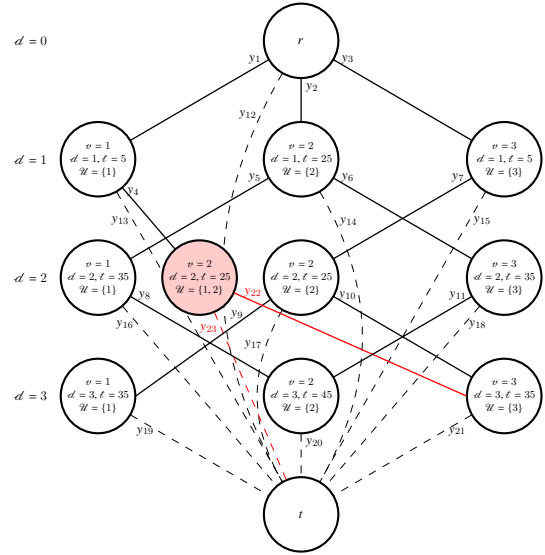

\section{Detailed Computational Results}

\subsection{Dantzig-Wolfe and Arc-Flow Performance (Without Subset-Row Cuts)} \label{A_results_noCuts}

Table~\ref{t_DWvsAF_combined} summarizes the performance of DW and AF for both $\Delta = 6$ and $\Delta = 12$, by instance group. For DW, Columns 2--6 present the lower bound, the time spent on the restricted master problem (RMP), the time spent on the pricing problem (PP), the total CPU time, and the number of instances solved within the time limit. Columns 7--12 present the corresponding AF metrics, with Column 11 additionally reporting the ratio of DAG paths to priced routes as a measure of path recombination. Finally, Columns 13--15 report DW-to-AF ratios for the number of variables, pricing iterations, and total CPU time.

\begin{table}[htbp]
  \centering
  \begin{adjustbox}{max width=\textwidth}
  \begin{tabular}{l
                  r r r r r
                  r r r r r r
                  r r r}
    \toprule
    & \multicolumn{5}{c}{Dantzig-Wolfe}
    & \multicolumn{6}{c}{Arc-Flow}
    & \multicolumn{3}{c}{DW-to-AF Ratios} \\
    \cmidrule(lr){2-6}
    \cmidrule(lr){7-12}
    \cmidrule(lr){13-15}
    Instances
      & LB     & RMP (s) & PP (s) & Time (s) & Solved
      & LB     & RMP (s) & PP (s) & Time (s) & Recomb. & Solved
      & Variables & Iterations & Time \\
    \midrule

    \multicolumn{15}{c}{\textbf{$\Delta = 6$}} \\
    \midrule
    R1-25  & 457.24 & 0.03 & 0.28 & 0.34 & 12/12
           & 457.24 & 0.08 & 0.31 & 0.46 & 1.02 & 12/12
           & 0.27 & 0.88 & 0.74 \\
    C1-25  & 190.58 & 0.06 & 0.79 & 0.91 &  9/9
           & 190.58 & 0.15 & 0.52 & 0.80 & 1.41 &  9/9
           & 0.30 & 1.46 & 1.13 \\
    RC1-25 & 340.78 & 0.06 & 0.58 & 0.70 &  8/8
           & 340.78 & 0.12 & 0.56 & 0.80 & 1.14 &  8/8
           & 0.27 & 1.13 & 0.88 \\
    R2-25  & 376.73 & 0.07 & 1.26 & 1.38 & 11/11
           & 376.73 & 0.28 & 1.20 & 1.64 & 1.12 & 11/11
           & 0.21 & 1.05 & 0.85 \\
    C2-25  & 214.30 & 0.15 & 2.20 & 2.48 &  8/8
           & 214.30 & 0.57 & 1.59 & 2.34 & 3.39 &  8/8
           & 0.25 & 1.62 & 1.06 \\
    RC2-25 & 309.91 & 0.12 & 3.96 & 4.29 &  8/8
           & 309.91 & 0.62 & 3.15 & 4.15 & 1.79 &  8/8
           & 0.25 & 1.29 & 1.03 \\
    \midrule
    \textbf{Geo.\ Mean}
           & \textbf{300.95} & \textbf{0.07} & \textbf{1.06} & \textbf{1.21} & \textbf{56/56}
           & \textbf{300.95} & \textbf{0.23} & \textbf{0.90} & \textbf{1.29} & \textbf{1.49} & \textbf{56/56}
           & \textbf{0.26} & \textbf{1.21} & \textbf{0.94} \\
    \midrule
    R1-50  & 747.96 & 0.14 & 3.30 & 3.61 & 12/12
           & 747.96 & 0.64 & 3.62 & 4.49 & 1.04 & 12/12
           & 0.22 & 0.94 & 0.80 \\
    C1-50  & 361.69 & 0.18 & 5.82 & 6.14 &  9/9
           & 361.69 & 0.73 & 4.26 & 5.24 & 1.53 &  9/9
           & 0.25 & 1.33 & 1.17 \\
    RC1-50 & 658.20 & 0.21 & 4.42 & 4.78 &  8/8
           & 658.20 & 0.78 & 4.48 & 5.61 & 1.20 &  8/8
           & 0.21 & 1.03 & 0.85 \\
    R2-50  & 599.64 & 0.61 & 69.12 & 70.51 & 11/11
           & 599.64 & 7.41 & 61.57 & 71.43 & 1.30 & 11/11
           & 0.16 & 1.02 & 0.99 \\
    C2-50  & 357.48 & 7.75 & 162.85 & 175.31 &  8/8
           & 357.48 & 17.02 & 33.12 & 52.73 & 19.64 &  8/8
           & 0.39 & 4.67 & 3.32 \\
    RC2-50 & 571.84 & 0.83 & 97.60 & 99.51 &  7/8
           & 571.84 & 6.82 & 64.01 & 73.55 & 2.84 &  8/8
           & 0.23 & 1.49 & 1.35 \\
    \midrule
    \textbf{Geo.\ Mean}
           & \textbf{528.65} & \textbf{0.52} & \textbf{21.30} & \textbf{22.52} & \textbf{55/56}
           & \textbf{528.65} & \textbf{2.61} & \textbf{14.43} & \textbf{18.22} & \textbf{2.27} & \textbf{56/56}
           & \textbf{0.23} & \textbf{1.45} & \textbf{1.23} \\

    \midrule
    \multicolumn{15}{c}{\textbf{$\Delta = 12$}} \\
    \midrule
    R1-25  & 457.24 & 0.03 & 0.27 & 0.34 & 12/12
           & 457.24 & 0.07 & 0.32 & 0.46 & 1.01 & 12/12
           & 0.26 & 0.85 & 0.75 \\
    C1-25  & 190.58 & 0.06 & 0.81 & 0.93 &  9/9
           & 190.58 & 0.13 & 0.65 & 0.91 & 1.10 &  9/9
           & 0.26 & 1.29 & 1.02 \\
    RC1-25 & 340.86 & 0.04 & 0.52 & 0.59 &  8/8
           & 340.86 & 0.08 & 0.51 & 0.72 & 1.10 &  8/8
           & 0.25 & 1.05 & 0.83 \\
    R2-25  & 377.79 & 0.06 & 1.52 & 1.65 & 11/11
           & 377.80 & 0.25 & 1.42 & 1.83 & 1.08 & 11/11
           & 0.21 & 0.99 & 0.90 \\
    C2-25  & 214.30 & 0.16 & 2.22 & 2.47 &  8/8
           & 214.30 & 0.82 & 1.93 & 3.03 & 1.89 &  8/8
           & 0.20 & 1.32 & 0.82 \\
    RC2-25 & 318.11 & 0.06 & 2.40 & 2.59 &  8/8
           & 318.11 & 0.39 & 2.79 & 3.44 & 1.34 &  8/8
           & 0.22 & 0.98 & 0.75 \\
    \midrule
    \textbf{Geo.\ Mean}
           & \textbf{302.42} & \textbf{0.06} & \textbf{0.99} & \textbf{1.12} & \textbf{56/56}
           & \textbf{302.42} & \textbf{0.20} & \textbf{0.97} & \textbf{1.34} & \textbf{1.22} & \textbf{56/56}
           & \textbf{0.23} & \textbf{1.07} & \textbf{0.84} \\
    \midrule
    R1-50  & 748.50 & 0.17 & 3.46 & 3.78 & 12/12
           & 748.50 & 0.70 & 3.69 & 4.74 & 1.02 & 12/12
           & 0.22 & 0.93 & 0.80 \\
    C1-50  & 361.69 & 0.22 & 6.27 & 6.74 &  9/9
           & 361.69 & 1.03 & 5.47 & 6.90 & 1.10 &  9/9
           & 0.20 & 1.09 & 0.98 \\
    RC1-50 & 659.34 & 0.19 & 3.69 & 4.02 &  8/8
           & 659.34 & 0.61 & 4.37 & 5.23 & 1.10 &  8/8
           & 0.20 & 0.98 & 0.77 \\
    R2-50  & 617.60 & 0.45 & 48.06 & 49.00 & 10/11
           & 617.61 & 6.24 & 51.40 & 60.26 & 1.11 & 11/11
           & 0.16 & 0.98 & 0.81 \\
    C2-50  & 357.48 & 7.17 & 146.79 & 158.18 &  8/8
           & 357.48 & 32.37 & 42.19 & 81.62 & 5.78 &  8/8
           & 0.27 & 3.47 & 1.94 \\
    RC2-50 & 607.18 & 0.32 & 22.49 & 23.23 &  6/8
           & 607.18 & 2.76 & 27.72 & 31.45 & 1.66 &  6/8
           & 0.19 & 0.96 & 0.74 \\
    \midrule
    \textbf{Geo.\ Mean}
           & \textbf{536.81} & \textbf{0.44} & \textbf{15.27} & \textbf{16.25} & \textbf{53/56}
           & \textbf{536.81} & \textbf{2.50} & \textbf{13.21} & \textbf{17.26} & \textbf{1.54} & \textbf{54/56}
           & \textbf{0.20} & \textbf{1.22} & \textbf{0.94} \\
    \bottomrule
  \end{tabular}
  \end{adjustbox}
  \caption{Summary of DW and AF performance for both $\Delta=6$ and $\Delta=12$.}
  \label{t_DWvsAF_combined}
\end{table}

\subsection{Dantzig-Wolfe and Arc-Flow Performance With Subset-Row Cuts} \label{A_results_cuts}
Table~\ref{t_DWvsAF_cuts_combined} reports the same information as Table~\ref{t_DWvsAF_combined} when subset-row cuts (SRCs) are introduced. Columns 3 and 9  additionally report the number of cuts added to DW and AF, respectively. We report the arithmetic mean in these columns since some values are zero.

\begin{table}[htbp]
    \centering
    \begin{adjustbox}{max width=\textwidth}
    \begin{tabular}{l
                  r r r r r r
                  r r r r r r r
                  r r r}
    \toprule
    & \multicolumn{6}{c}{Dantzig-Wolfe}
    & \multicolumn{7}{c}{Arc-Flow}
    & \multicolumn{3}{c}{DW-to-AF Ratios} \\
    \cmidrule(lr){2-7}
    \cmidrule(lr){8-14}
    \cmidrule(lr){15-17}
    \multicolumn{1}{c}{Instances}
      &  \multicolumn{1}{c}{LB} &  \multicolumn{1}{c}{Cuts} &  \multicolumn{1}{c}{RMP (s)} &  \multicolumn{1}{c}{PP (s)} &  \multicolumn{1}{c}{Time (s)} &  \multicolumn{1}{c}{Solved}
      &  \multicolumn{1}{c}{LB} &  \multicolumn{1}{c}{Cuts} &  \multicolumn{1}{c}{RMP (s)} &  \multicolumn{1}{c}{PP (s)} &  \multicolumn{1}{c}{Time (s)} &  \multicolumn{1}{c}{Recomb.} &  \multicolumn{1}{c}{Solved}
      &  \multicolumn{1}{c}{Variables} &  \multicolumn{1}{c}{Iterations} &  \multicolumn{1}{c}{Time} \\
    \midrule

    \multicolumn{17}{c}{\textbf{$\Delta = 6$}} \\
    \midrule
    R1-25  & 459.22 & 12.3 & 0.04 & 0.38  & 0.47  & 12/12
           & 459.22 & 12.3 & 0.15 & 0.43  & 0.69  & 1.02 & 12/12
           & 0.26 & 0.86 & 0.68 \\
    C1-25  & 190.58 & 0.0  & 0.05 & 0.81  & 0.91  &  9/9
           & 190.58 & 0.0  & 0.14 & 0.60  & 0.82  & 1.41 &  9/9
           & 0.30 & 1.46 & 1.11 \\
    RC1-25 & 346.26 & 9.1  & 0.06 & 0.78  & 0.92  &  8/8
           & 346.26 & 9.3  & 0.21 & 0.65  & 1.03  & 1.14 &  8/8
           & 0.27 & 1.18 & 0.89 \\
    R2-25  & 380.62 & 24.8 & 0.10 & 3.90  & 4.21  & 11/11
           & 380.62 & 28.1 & 1.21 & 6.28  & 8.05  & 1.11 & 11/11
           & 0.20 & 0.97 & 0.52 \\
    C2-25  & 214.45 & 1.3  & 0.15 & 2.59  & 2.86  &  8/8
           & 214.45 & 1.3  & 0.62 & 1.88  & 2.75  & 3.00 &  8/8
           & 0.25 & 1.64 & 1.04 \\
    RC2-25 & 330.37 & 13.0 & 0.14 & 3.20  & 3.47  &  6/8
           & 330.37 & 12.5 & 0.58 & 2.77  & 3.62  & 1.54 &  6/8
           & 0.30 & 1.43 & 0.96 \\
    \midrule
    \textbf{Geo.\ Mean}
           & \textbf{305.76} & \textbf{10.1} & \textbf{0.08} & \textbf{1.41} & \textbf{1.59} & \textbf{54/56}
           & \textbf{305.76} & \textbf{10.6} & \textbf{0.35} & \textbf{1.33} & \textbf{1.90} & \textbf{1.43} & \textbf{54/56}
           & \textbf{0.26} & \textbf{1.22} & \textbf{0.84} \\
    \midrule
    R1-50  & 754.36 & 48.5 & 0.37 & 7.30  & 8.13  & 12/12
           & 754.34 & 48.9 & 2.81 & 8.67  & 12.19 & 1.02 & 12/12
           & 0.22 & 0.87 & 0.67 \\
    C1-50  & 361.69 & 0.0  & 0.23 & 5.73  & 6.18  &  9/9
           & 361.69 & 0.0  & 0.66 & 4.32  & 5.25  & 1.53 &  9/9
           & 0.25 & 1.33 & 1.18 \\
    RC1-50 & 712.11 & 84.5 & 0.66 & 28.72 & 30.43 &  8/8
           & 712.41 & 78.5 & 6.37 & 31.28 & 41.06 & 1.16 &  8/8
           & 0.20 & 0.91 & 0.74 \\
    R2-50  & 645.32 & 62.5 & 0.94 & 121.85 & 124.01 &  8/11
           & 645.32 & 65.0 & 33.90 & 223.42 & 284.78 & 1.08 &  8/11
           & 0.14 & 0.88 & 0.44 \\
    C2-50  & 357.48 & 0.0  & 7.80 & 163.05 & 175.59 &  8/8
           & 357.48 & 0.0  & 16.95 & 33.13 & 52.67 & 19.64 &  8/8
           & 0.39 & 4.67 & 3.33 \\
    RC2-50 & 617.39 & 34.4 & 0.90 & 80.04 & 81.70 &  5/8
           & 617.39 & 35.8 & 15.15 & 52.17 & 68.56 & 2.45 &  5/8
           & 0.24 & 1.34 & 1.19 \\
    \midrule
    \textbf{Geo.\ Mean}
           & \textbf{549.97} & \textbf{38.3} & \textbf{0.85} & \textbf{35.23} & \textbf{37.36} & \textbf{50/56}
           & \textbf{550.01} & \textbf{38.0} & \textbf{6.84} & \textbf{27.71} & \textbf{37.32} & \textbf{2.13} & \textbf{50/56}
           & \textbf{0.23} & \textbf{1.34} & \textbf{1.00} \\

    \midrule
    \multicolumn{17}{c}{\textbf{$\Delta = 12$}} \\
    \midrule
    R1-25  & 459.22 & 12.3 & 0.05 & 0.41  & 0.49  & 12/12
           & 459.22 & 13.4 & 0.12 & 0.41  & 0.63  & 1.01 & 12/12
           & 0.25 & 0.87 & 0.78 \\
    C1-25  & 190.58 & 0.0  & 0.05 & 0.83  & 0.93  &  9/9
           & 190.58 & 0.0  & 0.15 & 0.64  & 0.90  & 1.10 &  9/9
           & 0.26 & 1.29 & 1.02 \\
    RC1-25 & 346.26 & 7.9  & 0.05 & 0.61  & 0.71  &  8/8
           & 346.26 & 6.9  & 0.13 & 0.59  & 0.83  & 1.10 &  8/8
           & 0.25 & 1.05 & 0.85 \\
    R2-25  & 380.62 & 24.4 & 0.14 & 4.69  & 4.94  & 11/11
           & 380.62 & 25.1 & 1.00 & 6.52  & 8.25  & 1.08 & 11/11
           & 0.18 & 0.93 & 0.60 \\
    C2-25  & 214.45 & 1.3  & 0.17 & 2.68  & 3.06  &  8/8
           & 214.45 & 1.3  & 0.79 & 2.37  & 3.45  & 1.76 &  8/8
           & 0.20 & 1.35 & 0.89 \\
    RC2-25 & 318.11 & 0.0  & 0.06 & 2.47  & 2.67  &  8/8
           & 318.11 & 0.0  & 0.36 & 2.75  & 3.37  & 1.34 &  8/8
           & 0.22 & 0.98 & 0.79 \\
    \midrule
    \textbf{Geo.\ Mean}
           & \textbf{303.84} & \textbf{7.3} & \textbf{0.08} & \textbf{1.36} & \textbf{1.53} & \textbf{56/56}
           & \textbf{303.84} & \textbf{7.8} & \textbf{0.30} & \textbf{1.37} & \textbf{1.89} & \textbf{1.21} & \textbf{56/56}
           & \textbf{0.22} & \textbf{1.06} & \textbf{0.81} \\
    \midrule
    R1-50  & 754.30 & 48.3 & 0.35 & 7.84  & 8.59  & 12/12
           & 754.33 & 46.9 & 2.62 & 9.20  & 12.59 & 1.01 & 12/12
           & 0.21 & 0.89 & 0.68 \\
    C1-50  & 361.69 & 0.0  & 0.20 & 6.27  & 6.69  &  9/9
           & 361.69 & 0.0  & 1.04 & 5.40  & 6.86  & 1.10 &  9/9
           & 0.20 & 1.09 & 0.98 \\
    RC1-50 & 712.45 & 81.9 & 0.67 & 18.59 & 19.90 &  8/8
           & 712.55 & 76.8 & 4.56 & 18.49 & 25.59 & 1.10 &  8/8
           & 0.21 & 0.96 & 0.78 \\
    R2-50  & 645.75 & 55.0 & 0.83 & 106.96 & 109.10 &  8/11
           & 645.77 & 55.0 & 21.50 & 152.97 & 191.53 & 1.07 &  8/11
           & 0.15 & 0.92 & 0.57 \\
    C2-50  & 357.48 & 0.0  & 7.00 & 144.65 & 155.66 &  8/8
           & 357.48 & 0.0  & 32.45 & 42.51 & 81.79 & 5.78 &  8/8
           & 0.27 & 3.47 & 1.90 \\
    RC2-50 & 607.18 & 0.0  & 0.32 & 22.87 & 23.62 &  6/8
           & 607.18 & 0.0  & 2.91 & 27.69 & 31.74 & 1.66 &  6/8
           & 0.19 & 0.96 & 0.74 \\
    \midrule
    \textbf{Geo.\ Mean}
           & \textbf{548.54} & \textbf{30.9} & \textbf{0.67} & \textbf{26.20} & \textbf{27.77} & \textbf{51/56}
           & \textbf{548.56} & \textbf{29.8} & \textbf{5.42} & \textbf{23.43} & \textbf{32.12} & \textbf{1.52} & \textbf{51/56}
           & \textbf{0.20} & \textbf{1.19} & \textbf{0.86} \\
    \bottomrule
    \end{tabular}
    \end{adjustbox}
    \caption{Summary of DW and AF performance with SRCs for both $\Delta=6$ and $\Delta=12$.}
    \label{t_DWvsAF_cuts_combined}
\end{table}

\subsection{Dantzig-Wolfe and Arc-Flow Performance Under Explicit and Implicit DAG Representations} \label{A_explicit_vs_implicit}

We compare DW and AF under explicit and implicit DAG representations. In the explicit approach, the unfolded \emph{ng}-relaxation state space is stored in memory, and the exact pricing problem is solved using a standard shortest path algorithm. We implement a \emph{beam search}-based heuristic pricing procedure that explores only the three best extensions from each node. In contrast, the implicit approach relies on label extensions and dominance checks (see \S\ref{ss_VRPTW_DW}). For this experiment, we consider $\Delta = 0$ and only type 1 instances (i.e., with tight time windows).

Table \ref{t_explicit_implicit} reports the CPU times (in seconds) of DW and AF under the two approaches. For DW, Columns 2–4 report the CPU times with the explicit DAG, the implicit DAG, and their ratio for the 25-customer instances. Columns 5–7 provide the corresponding results for AF. Finally, Columns 9–14 present the same information as Columns 2–7 for the 50-customer instances. For DW, the implicit approach is about three 3.7 and 3.0 times faster than the explicit approach on the 25- and 50-customer instances, respectively. For AF, these factors are 3.6 and 3.8, indicating that the implicit approach is generally much faster for both reformulations, although the explicit representation remains competitive in some cases. These results provide context for the discussion in \S\ref{s_computationalImplications} and \S\ref{s_computational_subproblem}, but they are not generalizable, as performance highly depends on implementation. For example, \cite{karahalios2025column} found the explicit DAG approach to be competitive with state-of-the-art implicit methods when the \emph{ng}-route relaxation was combined with time bucketing, relaxed capacity constraints, and a specialized minimum-update successive shortest path algorithm, among other enhancements.

\renewcommand{\arraystretch}{0.75}
\begin{table}[htbp]
\centering
\begin{adjustbox}{max width=\textwidth}
\begin{tabular}{l rrr rrr | l rrr rrr}
\toprule
& \multicolumn{3}{c}{Dantzig-Wolfe} & \multicolumn{3}{c}{Arc-Flow} &
& \multicolumn{3}{c}{Dantzig-Wolfe} & \multicolumn{3}{c}{Arc-Flow} \\
\cmidrule(lr){2-4}\cmidrule(lr){5-7}\cmidrule(lr){9-11}\cmidrule(lr){12-14}
Instance & Explicit (s) & Implicit (s) & Ratio & Explicit (s) & Implicit (s) & Ratio &
Instance & Explicit (s) & Implicit (s) & Ratio & Explicit (s) & Implicit (s) & Ratio \\
\midrule
R101-25 & 0.07 & 0.07 & 1.00 & 0.11 & 0.14 & 0.79 & R101-50 & 0.17 & 0.31 & 0.55 & 0.29 & 0.44 & 0.66 \\
R102-25 & 0.88 & 0.17 & 5.18 & 0.99 & 0.28 & 3.54 & R102-50 & 26.55 & 1.23 & 21.59 & 30.43 & 1.48 & 20.56 \\
R103-25 & 3.38 & 0.42 & 8.05 & 3.29 & 0.68 & 4.84 & R103-50 & 152.80 & 5.96 & 25.64 & 118.98 & 7.25 & 16.41 \\
R104-25 & 12.85 & 0.65 & 19.77 & 15.11 & 0.91 & 16.60 & R104-50 & 474.93 & 43.26 & 10.98 & 595.52 & 43.94 & 13.55 \\
R105-25 & 0.11 & 0.14 & 0.79 & 0.18 & 0.22 & 0.82 & R105-50 & 0.64 & 0.59 & 1.09 & 0.89 & 0.94 & 0.95 \\
R106-25 & 1.79 & 0.29 & 6.17 & 2.88 & 0.45 & 6.40 & R106-50 & 43.05 & 3.06 & 14.07 & 50.79 & 3.86 & 13.16 \\
R107-25 & 7.04 & 0.54 & 13.04 & 6.30 & 0.80 & 7.88 & R107-50 & 197.75 & 10.51 & 18.82 & 185.51 & 12.19 & 15.22 \\
R108-25 & 20.13 & 0.86 & 23.41 & 19.68 & 1.21 & 16.26 & R108-50 & 474.09 & 56.35 & 8.41 & 590.31 & 57.40 & 10.28 \\
R109-25 & 0.42 & 0.29 & 1.45 & 0.57 & 0.51 & 1.12 & R109-50 & 8.00 & 2.65 & 3.02 & 7.46 & 3.68 & 2.03 \\
R110-25 & 1.76 & 0.53 & 3.32 & 1.64 & 0.79 & 2.08 & R110-50 & 33.56 & 7.72 & 4.35 & 37.79 & 8.69 & 4.35 \\
R111-25 & 4.61 & 0.57 & 8.09 & 4.25 & 0.72 & 5.90 & R111-50 & 105.99 & 12.43 & 8.53 & 102.63 & 13.42 & 7.65 \\
R112-25 & 3.93 & 1.00 & 3.93 & 4.98 & 1.45 & 3.43 & R112-50 & 98.36 & 30.57 & 3.22 & 99.97 & 31.73 & 3.15 \\
C101-25 & 0.34 & 0.29 & 1.17 & 0.25 & 0.33 & 0.76 & C101-50 & 2.45 & 1.39 & 1.76 & 2.11 & 1.05 & 2.01 \\
C102-25 & 21.08 & 1.21 & 17.42 & 20.15 & 0.94 & 21.44 & C102-50 & 100.15 & 11.71 & 8.55 & 106.14 & 4.57 & 23.23 \\
C103-25 & 78.23 & 4.10 & 19.08 & 79.60 & 1.59 & 50.06 & C103-50 & 564.86 & 63.87 & 8.84 & 408.89 & 19.97 & 20.48 \\
C104-25 & 129.05 & 5.62 & 22.96 & 101.45 & 2.57 & 39.48 & C104-50 & 962.01 & 169.35 & 5.68 & 1,205.83 & 91.76 & 13.14 \\
C105-25 & 0.84 & 0.34 & 2.47 & 0.74 & 0.35 & 2.11 & C105-50 & 11.48 & 1.69 & 6.79 & 7.48 & 1.34 & 5.58 \\
C106-25 & 0.38 & 0.28 & 1.36 & 0.34 & 0.33 & 1.03 & C106-50 & 6.04 & 1.45 & 4.17 & 6.20 & 1.07 & 5.79 \\
C107-25 & 3.17 & 0.50 & 6.34 & 2.29 & 0.40 & 5.73 & C107-50 & 18.50 & 2.11 & 8.77 & 22.67 & 1.50 & 15.11 \\
C108-25 & 6.34 & 1.06 & 5.98 & 9.27 & 0.97 & 9.56 & C108-50 & 45.03 & 13.09 & 3.44 & 78.93 & 5.92 & 13.33 \\
C109-25 & 16.51 & 1.48 & 11.16 & 20.21 & 1.40 & 14.44 & C109-50 & 127.70 & 25.23 & 5.06 & 108.40 & 13.95 & 7.77 \\
RC101-25 & 0.14 & 0.26 & 0.54 & 0.20 & 0.33 & 0.61 & RC101-50 & 0.39 & 0.84 & 0.46 & 0.56 & 1.00 & 0.56 \\
RC102-25 & 1.08 & 0.70 & 1.54 & 1.21 & 0.72 & 1.68 & RC102-50 & 3.62 & 3.76 & 0.96 & 4.15 & 3.19 & 1.30 \\
RC103-25 & 2.77 & 1.43 & 1.94 & 2.68 & 1.25 & 2.14 & RC103-50 & 10.16 & 14.62 & 0.70 & 12.95 & 10.59 & 1.22 \\
RC104-25 & 6.57 & 2.09 & 3.14 & 5.66 & 1.52 & 3.72 & RC104-50 & 34.61 & 71.94 & 0.48 & 34.34 & 55.91 & 0.61 \\
RC105-25 & 0.59 & 0.51 & 1.16 & 0.60 & 0.50 & 1.20 & RC105-50 & 3.07 & 2.56 & 1.20 & 3.50 & 2.62 & 1.34 \\
RC106-25 & 0.63 & 0.67 & 0.94 & 0.95 & 0.94 & 1.01 & RC106-50 & 2.48 & 4.22 & 0.59 & 3.43 & 4.49 & 0.76 \\
RC107-25 & 2.56 & 1.64 & 1.56 & 2.81 & 1.70 & 1.65 & RC107-50 & 8.18 & 17.77 & 0.46 & 9.17 & 14.54 & 0.63 \\
RC108-25 & 4.16 & 3.37 & 1.23 & 4.90 & 3.24 & 1.51 & RC108-50 & 16.56 & 46.25 & 0.36 & 15.67 & 47.54 & 0.33 \\
\midrule
\textbf{Geo. Mean} & \textbf{2.38} & \textbf{0.65} & \textbf{3.68} & \textbf{2.58} & \textbf{0.73} & \textbf{3.56} &
\textbf{Geo. Mean} & \textbf{22.26} & \textbf{7.20} & \textbf{3.09} & \textbf{24.33} & \textbf{6.33} & \textbf{3.85} \\
\bottomrule
\end{tabular}
\end{adjustbox}
\caption{CPU time (in seconds) of DW and AF with an explicit and implicit DAG representation ($\Delta = 0$).}
\label{t_explicit_implicit}
\end{table}
\renewcommand{\arraystretch}{1.0}

\subsection{Dantzig-Wolfe and Arc-Flow Performance with Iterative Subproblem Strengthening} \label{A_iterativeSubproblem}

Table \ref{t_DSSR_CE_byInstanceGroup} reports the same information as Table \ref{t_DSSR_CE}, but aggregated by instance group and with the number of solved instances.

\begin{table}[htb]
  \centering
  \begin{adjustbox}{max width=\textwidth}
  \begin{tabular}{l
                  r r r c
                  r r r r c
                  r r r c
                  r r r r c}
    \toprule
    \multicolumn{1}{l}{} &
    \multicolumn{4}{c}{DW\text{-}DSSR} &
    \multicolumn{5}{c}{AF\text{-}DSSR} &
    \multicolumn{4}{c}{DW\text{-}CE} &
    \multicolumn{5}{c}{AF\text{-}CE} \\
    \cmidrule(lr){2-5}
    \cmidrule(lr){6-10}
    \cmidrule(lr){11-14}
    \cmidrule(lr){15-19}
    \multicolumn{1}{c}{Instances} &
    \multicolumn{1}{c}{\shortstack{Strength.\\Iter.}} & \multicolumn{1}{c}{\shortstack{Elim.\\Routes}} & \multicolumn{1}{c}{Time (s)} & \multicolumn{1}{c}{Solved} &
    \multicolumn{1}{c}{\shortstack{Strength.\\Iter.}} & \multicolumn{1}{c}{\shortstack{Elim.\\Arcs}} & \multicolumn{1}{c}{Time (s)} & \multicolumn{1}{c}{\shortstack{Change\\in DAG}} & \multicolumn{1}{c}{Solved} &
    \multicolumn{1}{c}{\shortstack{Strength.\\Iter.}} & \multicolumn{1}{c}{\shortstack{Elim.\\Routes}} & \multicolumn{1}{c}{Time (s)} & \multicolumn{1}{c}{Solved} &
    \multicolumn{1}{c}{\shortstack{Strength.\\Iter.}} & \multicolumn{1}{c}{\shortstack{Elim.\\Arcs}} & \multicolumn{1}{c}{Time (s)} & \multicolumn{1}{c}{\shortstack{Change\\in DAG}} & \multicolumn{1}{c}{Solved} \\
    \midrule
    R1-25      & 2.5 &   250.1 &  7.35 & 12/12 & 2.7 &   903.3 &  8.49 &     893,894.3 & 12/12 &  7.4 &   85.4 &  3.99 & 12/12 &  7.4 &   21.1 &  4.88 &   1,330.9 & 12/12 \\
    C1-25      & 1.0 &   398.5 & 11.78 &  9/9  & 1.0 & 1,576.3 & 10.58 &   4,864,992.4 &  9/9  &  4.8 &   81.3 &  8.34 &  9/9  &  4.7 &   10.9 &  7.42 &     377.0 &  9/9  \\
    RC1-25     & 2.8 &   533.9 &  8.23 &  8/8  & 2.8 & 1,669.3 &  9.03 &   1,002,700.6 &  8/8  & 11.2 &  340.8 &  4.81 &  8/8  & 10.8 &   52.0 &  4.97 &   1,786.3 &  8/8  \\
   \midrule
    \textbf{Geo.\ Mean} & \textbf{2.1} & \textbf{366.3} & \textbf{8.78} & \textbf{29/29} & \textbf{2.2} & \textbf{1,281.7} & \textbf{9.25} & \textbf{1,221,596.3} & \textbf{29/29} & \textbf{7.8} & \textbf{139.7} & \textbf{5.28} & \textbf{29/29} & \textbf{7.7} & \textbf{25.2} & \textbf{5.59} & \textbf{970.8} & \textbf{29/29} \\
    \midrule
    R1-50      & 4.0 &   346.8 & 28.06 &  6/12 & 2.7 &   790.0 & 27.77 &   3,959,035.3 &  6/12 &  8.3 &  130.2 & 13.54 &  8/12 &  8.1 &   33.3 & 17.37 &   3,846.8 &  8/12 \\
    C1-50      & 1.0 &   846.2 & 62.72 &  8/9  & 1.0 & 2,743.5 & 70.31 &  11,766,968.5 &  8/9  &  5.4 &  112.5 & 49.51 &  9/9  &  4.0 &   12.4 & 44.01 &     593.3 &  9/9  \\
    RC1-50     & 5.7 & 1,796.9 & 108.27 &  8/8  & 5.7 & 6,977.5 & 129.31 &   7,119,176.3 &  8/8  & 172.7 & 1,427.6 & 83.02 &  8/8  & 170.6 &  620.0 & 83.05 &  14,928.8 &  8/8  \\
        \midrule
    \textbf{Geo.\ Mean} & \textbf{3.4} & \textbf{986.6} & \textbf{61.42} & \textbf{22/29} & \textbf{3.1} & \textbf{3,205.1} & \textbf{68.11} & \textbf{4,944,384.5} & \textbf{22/29} & \textbf{34.1} & \textbf{415.7} & \textbf{41.96} & \textbf{25/29} & \textbf{31.1} & \textbf{112.2} & \textbf{43.03} & \textbf{4,878.8} & \textbf{25/29} \\
    \bottomrule
  \end{tabular}
  \end{adjustbox}
  \caption{Summary of DW and AF performance with DSSR and CE.}
  \label{t_DSSR_CE_byInstanceGroup}
\end{table}

\end{document}